\newcolumntype{L}[1]{>{\raggedright\arraybackslash}p{#1}}
\newtheorem{thm}{Theorem}[section]
\newenvironment{customthm}[1]
{\innercustomthm}
{\endinnercustomthm}
\newtheorem*{thm*}{Theorem}
\newtheorem{lem}[thm]{Lemma}
\newtheorem{conj}[thm]{Conjecture}
\newtheorem{prop}[thm]{Proposition}
\newtheorem{cor}[thm]{Corollary}
\theoremstyle{definition}
\newtheorem{remark}[thm]{Remark}
\newtheorem{assumption}[thm]{Assumption}
\newtheorem{example}[thm]{Example}
\numberwithin{equation}{section}
\newcommand\xto[1]{\xrightarrow{#1}}
\newcommand{\mf}{\mathfrak}
\newcommand{\mb}{\mathbb}
\newcommand{\mc}{\mathcal}
\newcommand{\mbf}{\mathbf}
\newcommand{\ssc}{\mathrm{ssc}}
\newcommand{\wt}{\widetilde}
\def\Hom{\mbox{\rm Hom}}
\newcommand{\colorZ}[1]{{{#1}}}
\newcommand{\colorX}[1]{{{#1}}}
\DeclareMathOperator\rank{rank}
\DeclareMathOperator\Gr{Gr}
\DeclareMathOperator\GL{GL}
\DeclareMathOperator\SL{SL}
\DeclareMathOperator\Sym{Sym}
\DeclareMathOperator\ad{ad}
\DeclareMathOperator\Spec{Spec}
\DeclareMathOperator\depth{depth}
\DeclareMathOperator\grade{grade}
\DeclareMathOperator\pdim{pdim}
\DeclareMathOperator\Tor{Tor}
\DeclareMathOperator\Aut{Aut}
\newcommand{\Id}{\mathrm{Id}}
\newcommand{\Plucker}{\mathrm{Pl\ddot{u}cker}}
\newcommand{\Rgen}{\widehat{R}_\mathrm{gen}}
\newcommand{\Fgen}{\mathbb{F}^\mathrm{gen}}
\newcommand{\margin}[1]{\scalebox{0.7}{#1}}
\newcommand{\cotimes}{\,\widehat{\otimes}\,}
\begin{document}
\title{An ADE correspondence for grade three perfect ideals}

\author{Lorenzo Guerrieri, Xianglong Ni, Jerzy Weyman}

\dedicatory{In memory of David Buchsbaum}

\maketitle

\begin{abstract}
	Using the theory of ``higher structure maps'' from generic rings for free resolutions of length three, we give a classification of grade 3 perfect ideals with small type and deviation in local rings of equicharacteristic zero, extending the Buchsbaum-Eisenbud structure theorem on Gorenstein ideals and realizing it as the type D case of an ADE correspondence. We also deduce restrictions on Betti tables in the graded setting for such ideals.
	
	
\end{abstract} 

\setcounter{tocdepth}{2}
\tableofcontents

\section{Introduction}\label{sec:intro}


We say an ideal $I$ in a local Noetherian $\mb{C}$-algebra $(R,\mf{m})$ is \emph{perfect} if $\grade I = \pdim_R R/I$, where $\grade I \coloneqq \depth(I,R)$. In the event that $R$ is a regular local ring, $I$ being perfect is equivalent to $R/I$ being a Cohen-Macaulay $R$-module by the Auslander-Buchsbaum formula. It is well-known that all perfect ideals of grade two are determinantal. More precisely one has the following corollary of Hilbert-Burch:

\begin{thm*}
	Let $S$ be the polynomial ring on the variables $x_{i,j}$ ($1 \leq i \leq n$, $1 \leq j \leq n-1$), localized at the ideal of variables. Consider the complex
	\[
	\mb{G} \colon 0 \to S^{n-1} \xto{d_2} S^n \xto{d_1} S
	\]
	where $d_2 = [x_{i,j}]$ is the generic matrix in the variables $x_{i,j}$ and $(d_1)_{1,k}$ is $(-1)^k$ times the $k$-th $(n-1)\times (n-1)$ minor of $d_2$. This complex is acyclic.
	
	Letting $J$ denote the image of $d_1$, if $I \subset R$ is a perfect ideal of grade two minimally generated by $n$ elements, then there exists a local homomorphism $\varphi \colon S \to R$ such that $\varphi(J)R = I$, or equivalently that $\mb{G} \otimes R$ resolves $R/I$.
\end{thm*}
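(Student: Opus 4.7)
The plan is to reduce the statement to the classical Hilbert--Burch structure theorem together with a minimality argument, then define $\varphi$ by sending each generic variable $x_{i,j}$ to the $(i,j)$-entry of the second differential in a minimal free resolution of $R/I$.

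First, I invoke Hilbert--Burch in its usual form: any grade $2$ perfect ideal $I \subset R$ with $n$ minimal generators admits a minimal free resolution
\[
0 \longrightarrow R^{n-1} \xrightarrow{D_2} R^n \xrightarrow{D_1} R \longrightarrow R/I \longrightarrow 0,
\]
in which the $k$-th component of $D_1$ equals $(-1)^k$ times the $k$-th $(n-1)\times(n-1)$ minor of $D_2$, up to a global unit of $R$ that can be absorbed into a basis change of the rank-one target.

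Second, I check that every entry of $D_2$ lies in $\mathfrak{m}$. Because the $n$ chosen generators minimally generate $I$, every entry of $D_1$ lies in $\mathfrak{m}$. If some $(D_2)_{i,j}$ were a unit of $R$, standard row/column operations on the free modules $R^{n-1}$ and $R^n$ would split off a trivial summand $R \xrightarrow{1} R$ from $D_2$, contradicting the minimality of the resolution.

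Third, I define $\varphi \colon S \to R$ on generators by $\varphi(x_{i,j}) = (D_2)_{i,j}$. Since each $(D_2)_{i,j}$ lies in $\mathfrak{m}$, this extends to a \emph{local} homomorphism from the localization $S$. By construction $\varphi$ sends the generic matrix $d_2$ to $D_2$, and functoriality of minors then sends the generic $d_1$ to $D_1$. Consequently $\mathbb{G}\otimes_S R$ is precisely the chosen minimal resolution of $R/I$, so it is acyclic and $\varphi(J)R = I$. The only delicate point is the verification that $D_2$ has entries in $\mathfrak{m}$, i.e.\ the standard fact that a minimal free resolution over a local ring has differentials with entries in the maximal ideal; once this is in place the rest of the argument is a direct transcription, since the generic complex $\mathbb{G}$ is tautologically the universal instance of a complex of the given shape.
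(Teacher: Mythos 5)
The paper gives no proof of this statement; it records it as a classical corollary of the Hilbert--Burch theorem, so there is no ``paper's proof'' to compare against, and the evaluation must be on your argument's own merits.

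Your argument is essentially correct and is the standard reduction. There is one omission: you never address the theorem's first assertion, namely that the complex $\mathbb{G}$ over $S$ is itself acyclic. Your final sentence only establishes acyclicity of $\mathbb{G}\otimes_S R$ for the particular $R$ produced from a given grade-2 perfect ideal $I$; it says nothing about $\mathbb{G}$ over $S$ directly. The clean way to close this is to observe that the ideal $I_{n-1}([x_{i,j}])$ of maximal minors of the generic $n\times(n-1)$ matrix has grade exactly $2$ in $S$ (the generic determinantal locus $\{\operatorname{rank}\leq n-2\}$ has codimension $(n-(n-2))\cdot((n-1)-(n-2))=2$), so the Buchsbaum--Eisenbud acyclicity criterion applies to the two-step complex $\mathbb{G}$; equivalently, one may cite the existence half of Hilbert--Burch directly. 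One could also try to run your own specialization argument with $R=S$ and $I=I_{n-1}([x_{i,j}])$, but that presupposes knowing this determinantal ring is Cohen--Macaulay of codimension $2$ (Hochster--Eagon), which is morally the same content. Aside from this, your steps are fine: the minimality argument forcing the entries of $D_2$ into $\mathfrak{m}$, the definition of $\varphi$ on the generic variables and its extension to a local map from the localization, and the observation that the Hilbert--Burch scalar must be a unit (since otherwise $I\subseteq(a)$ would contradict $\grade I=2$) are all standard and correctly deployed.
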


For perfect ideals of grade three, theorems of this type are only known in a few special cases. The Betti numbers $(1,b_1,b_2,b_3)$ of such an ideal are determined by the type $r(R/I) \geq 1$ and the deviation $d(I) \geq 0$: the former is the minimal number of generators for the canonical module of $R/I$, thus equal to $b_3$, and the latter is by definition $b_1 - 3$. Then one has $b_2 = b_1 + b_3 - 1 = r(R/I) + d(I) + 2$. 

The case $r(R/I) = 1$ gives Gorenstein ideals, and these are characterized by the well-known structure theorem of Buchsbaum and Eisenbud: $I$ is generated by the $(n-1)\times(n-1)$ Pfaffians of an $n\times n$ skew matrix which appears as the differential $d_2$ in a resolution of $R/I$. From this, an analogous result for almost complete intersections (i.e. $d(I) = 1$) can be deduced from linkage, which was also carried out in \cite{Buchsbaum-Eisenbud77}.

The natural question to pose is whether all perfect ideals of grade 3 with a fixed type and deviation can be realized as specializations of some local generic example, as they do in these two cases. The smallest new case to consider would be when $r(R/I) = d(I) = 2$, so $R/I$ has Betti numbers $(1,5,6,2)$. Here one encounters an obstacle: it was observed in \cite{Brown87} that the Tor algebra multiplication
\[
m_{1,1} \colon \Tor_1(R/I,k) \otimes \Tor_1(R/I,k) \to \Tor_2(R/I,k)
\]
may or may not be zero. If $\varphi\colon R\to R'$ is a local homomorphism of local rings such that $I' = \varphi(I)R'$ is a perfect ideal of grade three in $R'$, then $\varphi$ induces an inclusion of residue fields $k \to k' \coloneqq R'/\mf{m'}$ through which $\Tor_*(R'/I',k') = \Tor_*(R/I,k) \otimes k'$. Consequently the property of $m_{1,1}$ being (non)zero is preserved under local specialization, meaning that there cannot be a single local generic example for perfect ideals with Betti numbers $(1,5,6,2)$.

Thus, in our attempt to generalize Hilbert-Burch and Buchsbaum-Eisenbud, we must make some sort of concession. One approach is to drop the condition that the generic example be local. This is the perspective taken by the ``genericity conjecture'' as discussed in \cite{WeymanICERM}. For the Betti numbers $(1,5,6,2)$, the conjecturally generic example $J(t)$ was explicitly described in \cite{CLKW20}. For instance, it was shown in \cite{Kustin22} that the example from \cite{Brown87} could be recovered as a specialization of $J(t)$.

Another approach is to keep the local condition, but to accommodate multiple generic examples for each sequence of Betti numbers. From the preceding discussion regarding $m_{1,1}$, we see that at least two generic examples are required for the Betti numbers $(1,5,6,2)$. We will see in Example~\ref{ex:E6} that this suffices. The generic example for the case $m_{1,1} \neq 0$ was already given in \cite{Brown87}, and it turns out that localizing $J(t)$ at the ideal of variables yields the generic example for the case $m_{1,1} = 0$. This is a consequence of our main classification result Theorem~\ref{thm:generic-examples}, which explains these two families in terms of the representation theory of $E_6$.

While we will address the former perspective by proving the genericity conjecture in Theorem~\ref{thm:non-local-gen}, the latter perspective proves to be much more revealing for the structure theory of perfect ideals. We will see that for certain Betti numbers, perfect ideals are naturally partitioned into disjoint families indexed by combinatorial data, and we enumerate them in Theorem~\ref{thm:counts}. In Examples~\ref{ex:D_n-1} and \ref{ex:D_n-2} we will recover that there is a single family for the Betti numbers $(1,n,n,1)$ ($n \geq 3$ odd) and $(1,4,n,n-3)$ ($n\geq 5$), but this behavior is the exception rather than the norm.

All of these results stem from a deep connection between free resolutions and representation theory that was originally uncovered in \cite{Weyman89} and \cite{Weyman18}. We motivate this connection by revisiting the well-understood Gorenstein case. Let $n = d(I)+3$ so that the Betti numbers are $(1,n,n,1)$. If $\mb{F}$ is an arbitrary minimal free resolution of $R/I$, then of course the differential $d_2\colon F_2 \to F_1$ need not be a skew matrix. As one sees from \cite{Buchsbaum-Eisenbud77}, the skew matrix appears after one identifies $F_2 \cong F_1^*$ using a choice of multiplication $F_1 \otimes F_2 \to F_3$ on the free resolution.

We can rephrase this as follows. The differential $d_2\colon F_2 \to F_1$ and the multiplication $F_1 \otimes F_2 \to F_3$ can be put side by side as an $n \times 2n$ matrix
\[
\begin{blockarray}{cccc}
	& \margin{$F_1^*$} & \margin{$F_1$} \\
	\begin{block}{c[ccc]}
		\margin{$F_2^*$} & d_2^* & w^{(2)}_1 \\
	\end{block}
\end{blockarray}
\]
where $w^{(2)}_1$ is the multiplication viewed as an isomorphism $F_1 \cong F_3^* \otimes F_1 \to F_2$. This matrix determines a map from $\Spec R$ to the orthogonal Grassmannian $OG(n,2n)$ of isotropic $n$-planes inside of $F_1^* \oplus F_1$ with the quadratic form $Q$ given by the evident pairing. To see this, there is an affine patch $N \subset OG(n,2n)$ consisting of those isotropic $n$-planes represented by an $n\times 2n$ matrix where the last $n\times n$ minor is non-vanishing. On such a matrix, after performing row operations so that the right $n \times n$ block is the identity matrix, the condition that $Q = 0$ is equivalent to the left block being skew. Moreover, on the affine patch $N$, the $(n-1)\times (n-1)$ pfaffians of the left skew matrix give Pl\"ucker coordinates cutting out a particular Schubert variety $X \subset OG(n,2n)$ of codimension 3. Thus the Buchsbaum-Eisenbud structure theorem yields a map $\Spec R \to N$, through which the local defining equations of $X$ at the ``origin'' in $N$ (representing the isotropic $n$-plane $F_1 \subset F_1^* \oplus F_1$) pull back to the generators of the ideal $I \subset R$. For a thorough explanation of this perspective, we refer to \cite{CVWthreetakes}. 

We will show that this formulation of the Buchsbaum-Eisenbud structure theorem realizes it as the type $D$ case of an ADE correspondence. In general, to deviation $d(I)$ and type $r(R/I)$, we associate the $T$-shaped graph with arms of length 1, $d(I)$, and $r(R/I)$. This graph $T$ is the Dynkin diagram
\begin{itemize}
	\item $A_n$ if $d(I) = 0$ and $r(R/I) = n-2$,
	\item $D_n$ if $d(I) = 1$ and $r(R/I) = n-3$ or vice versa,
	\item $E_6$ if $d(I) = r(R/I) = 2$,
	\item $E_7$ if $d(I) = 2$ and $r(R/I) = 3$ or vice versa,
	\item $E_8$ if $d(I) = 2$ and $r(R/I) = 4$ or vice versa.
\end{itemize}
Let $G$ be the associated simply-connected Lie group, and let $x_1 \in T$ be the node on the arm of length 1. We will show that generic examples of perfect ideals with the given type and deviation come from a certain codimension 3 Schubert variety inside of the homogeneous space $G/P_{x_1}$ where $P_{x_1}$ is the maximal parabolic for the node $x_1$.

The type $A$ case of the correspondence is uninteresting, as it occurs only when $d(I)=0$. This necessarily means that $I$ is generated by a regular sequence---in particular, $r(R/I) = 1$. The homogeneous space is $G/P_{x_1} = SL_4 / P_1 = \mb{P}^3$ in this case, and the Schubert variety $X$ is a point, which is indeed a complete intersection.

We mentioned above that there are two different families of perfect ideals with $d(I) = r(R/I) = 2$. This happens because the map $\Spec k \to \Spec R \to G/P_{x_1} = E_6/P_2$ lands in one of two strata of $X \subset E_6/P_2$, depending on whether the aforementioned multiplication on the Tor algebra is zero. We will describe these strata in \S\ref{sec:classify} and revisit this in Example~\ref{ex:E6}.

To achieve our goals, the main tool we will need is an appropriate generalization of the $n \times 2n$ block matrix leveraged in the Gorenstein case, and this is provided by the theory of ``higher structure maps'' originating from the study of generic free resolutions of length three. We now briefly explain this; more details will be given in \S\ref{bg:Rgen}. Given a complex
\[
\mb{F} \colon 0 \to F_m \xto{d_m} \cdots \to F_1 \xto{d_1} F_0,
\]
where $F_i = R^{f_i}$, we refer to the sequence $(f_0,f_1,\ldots,f_m)$ as the \emph{format} of $\mb{F}$. In the event that $\mb{F}$ is a minimal resolution over a local ring $R$, the $f_i$ are the ordinary Betti numbers of the module $H_0(\mb{F})$, but we will benefit from working in greater generality. For each fixed format $(f_0,f_1,f_2,f_3)$ of length three, a resolution $\mathbb{F}^\mathrm{gen}$ over a ring $\Rgen$ was constructed in \cite{Weyman89} and \cite{Weyman18} with the property that $\Fgen$ specializes to any free resolution of the given format.

The ring $\Rgen$ is a finitely generated $\mb{C}$-algebra if and only if the format $(f_0,f_1,f_2,f_3)$ is one listed in Table~\ref{table:Dynkin-types}. These are called \emph{Dynkin formats}, because the structure of $\Rgen$ is closely tied to a Kac-Moody Lie algebra $\mf{g}$ that is finite-dimensional (i.e. of Dynkin type) exactly in these cases. Henceforth we will always assume this to be the case.

\begin{table}[htbp]
	\caption{Length three formats with Noetherian $\Rgen$}
	\centering
	\begin{tabular}{*{4}cl}
		\toprule
		Type $D_n$ & Type $E_6$ & Type $E_7$ & Type $E_8$\\
		\cmidrule(lr){1-4}
		$(1,n,n,1)$ & $(1,5,6,2)$ & $(1,6,7,2)$ & $(1,7,8,2)$ & Format I (dual to VI) \\
		$(1,4,n,n-3)$ & & $(1,5,7,3)$ & $(1,5,8,4)$ & Format II (linked to I)\\
		$(n-3,n,4,1)$ & $(2,6,5,1)$ & $(3,7,5,1)$ & $(4,8,5,1)$ & Format III (dual to II)\\
		& $(2,5,5,2)$ & $(3,6,5,2)$ & $(4,7,5,2)$ & Format IV (linked to III)\\
		&& $(2,5,6,3)$ & $(2,5,7,4)$ & Format V (dual to IV)\\
		&& $(2,7,6,1)$ & $(2,8,7,1)$ & Format VI (linked to V)\\
		\bottomrule
	\end{tabular}
	\label{table:Dynkin-types}
\end{table}

We will explain the Lie algebra $\mf{g}$ and its relation to $\Rgen$ more precisely in \S\ref{bg:Rgen}. For now we comment that there are three representations inside of $\Rgen$ of particular interest: namely those generated by the entries of the differentials $d_i$ of $\mb{F}^\mathrm{gen}$. We call these the \emph{critical representations}. They have a graded decomposition in which each graded component is a representation of $\prod GL(F_i)$ where $F_i = \mb{C}^{f_i}$. We have displayed pieces of them below.
\begin{align*}
	W(d_3) &= F_2^* \otimes [F_3 \oplus \bigwedge^{f_0+1} F_1 \oplus \cdots]\\
	W(d_2) &= F_2 \otimes [F_1^* \oplus F_3^* \otimes \bigwedge^{f_0} F_1 \oplus \cdots]\\
	W(d_1) &= F_0^* \otimes [F_1 \oplus F_3^* \otimes \bigwedge^{f_0+2} F_1 \oplus \cdots]
\end{align*}
Given a homomorphism $w\colon \Rgen \to R$ specializing $\mb{F}^\mathrm{gen}$ to $\mb{F}$, restriction to $W(d_i)$ yields maps
\begin{align*}
	w^{(3)}\colon R \otimes [F_3 \oplus \bigwedge^{f_0+1} F_1 \oplus \cdots] &\to R \otimes F_2\\
	w^{(2)} \colon R \otimes [F_1^* \oplus F_3^* \otimes \bigwedge^{f_0} F_1 \oplus \cdots] &\to R \otimes F_2^*\\
	w^{(1)} \colon R \otimes [F_1 \oplus F_3^* \otimes \bigwedge^{f_0+2} F_1 \oplus \cdots] &\to R \otimes F_0
\end{align*}
By abuse of notation we will sometimes write $F_i$ to mean $F_i \otimes R$ when the meaning can be inferred from context. We write $w^{(i)}_j$ for the $j$-th component of the map $w^{(i)}$, with indexing starting at $j=0$. For instance $w^{(3)}_0 \colon F_3 \to F_2$ is just the differential $d_3$ of $\mb{F}$. Likewise $w^{(2)}_0 = d_2^*$ and $w^{(1)}_0 = d_1$. If $\mb{F}$ resolves $R/I$ for some ideal $I$ of grade at least\footnote{To be precise, it gives a choice of multiplicative structure on $0 \to F_3 \to F_2 \to F_1 \xto{a_2} R$ where $a_2$ comes from the First Structure Theorem of \cite{Buchsbaum-Eisenbud74}. If $I$ has grade at least 2, then it is equal to the image of $a_2$.} 2, then the maps $w^{(i)}_1$ give a choice of multiplicative structure on $\mb{F}$ lifting that on $\Tor_*(R/I,k)$. In general we will refer to the maps $w^{(i)}$ and their components $w^{(i)}_j$ as ``(higher) structure maps'' for the resolution $\mb{F}$.

The main technical result we will establish in \S\ref{sec:surjectivity-pf} is that the surjectivity of the maps $w^{(i)}$ is guaranteed if the Betti numbers of $R/I$ are Dynkin. This should be viewed as the substitute for the perfect pairing $F_1 \otimes F_2 \to F_3$ used in the Gorenstein case to identify $F_1 \cong F_2^*$, which gave surjectivity of the $n \times 2n$ matrix.

\begin{customthm}{\ref{thm:surjectivity}}
	Suppose that $\mb{F}$ is a resolution of Dynkin format over a $\mb{C}$-algebra $R$ such 
	that its dual $\mb{F}^*$ is also acyclic. Then if $w\colon \Rgen \to R$ specializes $\mb{F}^\mathrm{gen}$ to $\mb{F}$, the maps $w^{(i)}$ are surjective.
\end{customthm}
\begin{remark}
	Also of interest is the representation $W(a_2)$ generated by the entries of the Buchsbaum-Eisenbud multiplier $a_2$. The product of this representation with $a_1$ appears in the subrepresentation $\bigwedge^{f_0} F_0^* \otimes \bigwedge^{f_0}[F_1 \oplus \cdots]$ of $S_{f_0} W(d_1)$, which is to say the $f_0 \times f_0$ minors of $w^{(1)}$. We will show that the restriction of $w$ to $W(a_2)$ is surjective as well, but the importance of considering $W(a_2)$ is only apparent when dealing with module formats with $f_0 > 1$.
\end{remark}

In \S\ref{sec:background}, we provide background on Lie algebras and Schubert varieties. Afterwards, we summarize the key results pertaining to $\Rgen$ in \S\ref{bg:Rgen}, deferring technical proofs to Appendix~\ref{sec:Rgen-pfs}. Then we will prove Theorem~\ref{thm:surjectivity} in \S\ref{sec:surjectivity-pf}, and deduce some restrictions on graded Betti tables as a corollary. If $\mb{F}$ resolves $R/I$ for a grade three perfect ideal $I$ in a local ring $(R,\mf{m})$, the surjectivity of $w^{(1)}$ in Theorem~\ref{thm:surjectivity} is equivalent to the map being nonzero mod $\mf{m}$. Given this, we can ask for the first component of this structure map that is nonzero mod $\mf{m}$. After posing this question more precisely, we will show in \S\ref{sec:classify} that it has a well-defined answer for each ideal $I$, which may be used to classify the ideal. As an example in a very simple case, if $w^{(1)}_1\colon F_3^* \otimes \bigwedge^3 F_1 \to R$ is nonzero mod $\mf{m}$, then $I$ is a complete intersection. From the geometric perspective, this first nonzero component determines the stratum of the Schubert variety $X \subset G/P_{x_1}$ in which the map $\Spec k\to \Spec R \to G/P_{x_1}$ lands, and using this we show how the local defining equations of $X \subset G/P_{x_1}$ yield generic perfect ideals. We demonstrate in Example~\ref{ex:nonlicci-perfect} that Theorem~\ref{thm:surjectivity} fails without the Dynkin hypothesis. We briefly discuss the relationship to the theory of linkage in \S\ref{sec:beyond-ADE}, together with other directions for future study.

\subsection*{Acknowledgements}
This material is partially based upon work supported by the National Science Foundation under Grant No. DMS-1928930 and by the Alfred P. Sloan Foundation under grant G-2021-16778, while the authors were in residence at the Simons Laufer Mathematical Sciences Institute (formerly MSRI) in Berkeley, California, during the Spring 2024 semester. The first and third authors are supported by the grants MAESTRO NCN-UMO-2019/34/A/ST1/00263 - Research in Commutative Algebra and
Representation Theory, NAWA POWROTY - PPN/PPO/2018/1/00013/U/00001 - Applications of Lie algebras to Commutative Algebra, and OPUS grant National Science Centre, Poland grant UMO-2018/29/BST1/01290. The first author is also supported by the Miniatura grant
2023/07/X/ST1/01329 from NCN (Narodowe Centrum Nauki), which funded his visit to SLMath in April 2024.

The authors would like to thank Ela Celikbas, Lars Christensen, David Eisenbud, Sara Angela Filippini, Craig Huneke, Witold Kraskiewicz, Andrew Kustin, Jai Laxmi, Claudia Polini, Steven Sam, Jacinta Torres, Bernd Ulrich, and Oana Veliche for interesting discussions pertaining to this paper and related topics.

\section{Representation theory background}\label{sec:background}
The structure of the generic ring $\Rgen$ has important connections to representation theory. We provide the necessary background on Lie algebras and Schubert varieties in \S\ref{bg:lie} and \S\ref{bg:sch} respectively. For the former, we will work in the generality of Kac-Moody Lie algebras to better develop the theory of $\Rgen$ in \S\ref{bg:Rgen}. We refer the reader to \cite{Humphreys72} and \cite{Kumar02} for the material in this section.

\subsection{Lie algebras and representations}\label{bg:lie}

\subsubsection{Construction}\label{bg:lie-construction}
Fix integers $p,q,r \geq 1$, and let $T=T_{p,q,r}$ denote the graph
\[\begin{tikzcd}[column sep = small, row sep = small]
	x_{p-1} \ar[r,dash] & \cdots \ar[r,dash] & x_1 \ar[r,dash] & u \ar[r,dash]\ar[d,dash] & y_1 \ar[r,dash] & \cdots \ar[r,dash] & y_{q-1} \\
	&&& z_1 \ar[d,dash]\\
	&&& \vdots \ar[d,dash]\\
	&&& z_{r-1}
\end{tikzcd}\]
Let $n = p+q+r-2$ be the number of vertices. From the above graph, we construct an $n\times n$ matrix $A$, called the \emph{Cartan matrix}, whose rows and columns are indexed by the nodes of $T$:
\[
A = (a_{i,j})_{i,j \in T}, \quad a_{i,j} = \begin{cases}
	2 &\text{if $i = j$,}\\
	-1 &\text{if $i,j \in T$ are adjacent,}\\
	0 &\text{otherwise.}
\end{cases}
\]
$T$ is a Dynkin diagram if and only if $1/p + 1/q + 1/r > 1$; in this case we say it is of \emph{finite type}. For the applications to \S\ref{sec:surjectivity-pf} and beyond, we will only consider this case. We next describe how to construct the associated Lie algebra $\mf{g}$.

Let $\mf{h} = \mb{C}^{2n - \rank A}$, and pick independent sets $\Pi = \{\alpha_i\}_{i\in T} \subset \mf{h}^*$ and $\Pi^\vee = \{\alpha_i^\vee\}_{i\in T} \subset \mf{h}$ satisfying the condition
\[
\langle \alpha_i^\vee,\alpha_j \rangle = a_{i,j}.
\]
The $\alpha_i$ are the \emph{simple roots} and the $\alpha_i^\vee$ are the \emph{simple coroots}. If $1/p + 1/q + 1/r = 1$, then $T = E_{n-1}^{(1)}$ is of \emph{affine type} and $\rank A = n-1$. Otherwise $\rank A = n$, and $\Pi, \Pi^\vee$ are bases of $\mf{h}^*,\mf{h}$ respectively.

The Lie algebra $\mf{g} \coloneqq \mf{g}(T)$ is generated by $\mf{h}$ together with elements $e_i,f_i$ for $i\in T$, subject to the defining relations
\begin{gather*}
	[e_i,f_j] = \delta_{i,j} \alpha_i^\vee,\\
	[h,e_i] = \langle h, \alpha_i \rangle e_i, [h,f_i] = -\langle h,\alpha_i \rangle f_i  \text{ for } h \in \mf{h},\\
	[h,h'] = 0 \text{ for } h,h' \in \mf{h},\\
	\ad(e_i)^{1-a_{i,j}}(e_j) = \ad(f_i)^{1-a_{i,j}}(f_j) \text{ for } i \neq j.
\end{gather*}
Under the adjoint action of $\mf{h}$, the Lie algebra $\mf{g}$ decomposes into eigenspaces as $\mf{g} = \bigoplus \mf{g}_\alpha$, where
\[
\mf{g}_\alpha = \{x \in \mf{g} : [h,x] = \alpha(h)x \text{ for all } h \in \mf{h}\}.
\]
This is the \emph{root space decomposition} of $\mf{g}$.

For simplicity, we henceforth assume that $T$ is \emph{not} of affine type. Thus the Cartan matrix is invertible, and $\mf{g}$ is generated by $e_i$ and $f_i$ for $i \in T$ since $\alpha_i^\vee$ is a basis of $\mf{h}$. This is just for convenience of exposition; the theory we discuss remains valid in the affine case with minor adjustments.

\subsubsection{Gradings on $\mf{g}$}\label{bg:lie-grading1}

Let $Q \subset \mf{h}^*$ be the root lattice $\bigoplus_{i\in T} \mb{Z}\alpha_i$. If $\mf{g}_\alpha \neq 0$, then necessarily $\alpha \in Q$. If such an $\alpha$ is nonzero, we say it is a \emph{root}, and denote the set of all roots by $\Delta$. Hence the Lie algebra $\mf{g}$ is $Q$-graded. By singling out a vertex $t \in T$, this $Q$-grading can be coarsened to a $\mb{Z}$-grading by considering only the coefficient of $\alpha_t$. We refer to this as the $t$-grading. The sum of all $t$-gradings for $t\in T$ is called the \emph{principal gradation} on $\mf{g}$. The degree zero part in the principal gradation is the Cartan subalgebra $\mf{h}$. For $\alpha \in \Delta \cup \{0\}$, we write:
\begin{itemize}
	\item $\alpha >_t 0$ (resp. $\alpha \geq_t 0$) if the coefficient of $\alpha_t$ in $\alpha$ is positive (resp. nonnegative),
	\item $\alpha > 0$ (resp. $\alpha \geq 0$) if the coefficient of $\alpha_t$ in $\alpha$ is positive (resp. nonnegative) for some $t \in T$,
\end{itemize}
and similarly for $\alpha <_t 0, \alpha \leq_t 0, \alpha < 0, \alpha \leq 0$. We have $\Delta = \Delta^+ \amalg \Delta^-$ where $\Delta^+ = \{\alpha \in \Delta : \alpha > 0\}$ and $\Delta^- = \{\alpha\in \Delta : \alpha < 0\}$ are the sets of \emph{positive} and \emph{negative} roots respectively.

Using these notions, we define a few important subalgebras of $\mf{g}$:
\begin{align*}
	\mf{n}^+ &= \bigoplus_{\alpha > 0} \mf{g}_\alpha & \mf{n}^- &= \bigoplus_{\alpha < 0} \mf{g}_\alpha\\
	\mf{b}^+ &= \bigoplus_{\alpha \geq 0} \mf{g}_\alpha & \mf{b}^- &= \bigoplus_{\alpha \leq 0} \mf{g}_\alpha\\
	\mf{n}_t^+ &= \bigoplus_{\alpha >_t 0} \mf{g}_\alpha & \mf{n}_t^- &= \bigoplus_{\alpha <_t 0} \mf{g}_\alpha\\
	\mf{p}_t^+ &= \bigoplus_{\alpha \geq_t 0} \mf{g}_\alpha & \mf{p}_t^- &= \bigoplus_{\alpha \leq_t 0} \mf{g}_\alpha
\end{align*}
Write $h_i \in \mf{h}$ for the basis dual to the simple roots $\alpha_i \in \mf{h}^*$. The degree zero part of $\mf{g}$ in the $t$-grading is
\[
\mf{g}^{(t)} \times \mb{C}h_t
\]
where $\mf{g}^{(t)}$ is the subalgebra generated by $\{e_i, f_i\}_{i \neq t}$ and $\mb{C}h_t$ is the one-dimensional abelian Lie algebra spanned by $h_t$. The decomposition of $\mf{g}$ into $t$-graded components is just its decomposition into eigenspaces for the adjoint action of $h_t$:
\[
\mf{g} = \bigoplus_{j  \in \mb{Z}} \ker(\ad(h_t) - j).
\]

\begin{example}\label{ex:inclusion-of-sl}
	For the Dynkin diagram $A_n$, with vertices labeled as
	\[
	\begin{tikzcd}[column sep = small, row sep = small]
		1 \ar[r,dash] & 2 \ar[r,dash] & \cdots \ar[r,dash] & n,
	\end{tikzcd}
	\]
	the associated Lie algebra is $\mf{sl}_{n+1} \coloneqq \mf{sl}(\mb{C}^{n+1})$. Let $\epsilon_{ij}$ denote the $(n+1)\times(n+1)$ matrix whose entries are all 0 except for a single 1 in the $i$-th row and $j$-th column. Then it is customary to use the Lie algebra generators $e_i = \epsilon_{i,i+1}$ and $f_i = \epsilon_{i+1,i}$ for $i = 0,\ldots,n$.
	
	An ordered sequence of vertices $t_1,\ldots,t_n$ forming a subgraph $A_n \subset T$ yields an inclusion $\mf{sl}_{n+1} \hookrightarrow \mf{g}$ by sending the generators $e_i, f_i$ of $\mf{sl}_{n+1}$ to the corresponding elements $e_{t_i}, f_{t_i} \in \mf{g}$.
\end{example}
\begin{example}\label{ex:sl-subalgebras}
	Using the preceding, we identify a few particular subalgebras of $\mf{g}$ as follows, where $F_0 = \mb{C}^{p-1}$, $F_1 = \mb{C}^{p+q}$, $F_2 = \mb{C}^{q+r}$, and $F_3 = \mb{C}^{r-1}$:
	\begin{itemize}
		\item $\mf{sl}(F_0)$ corresponds to the ordered sequence of vertices $x_2,\ldots,x_{r_1}$,
		\item $\mf{sl}(F_1)$ corresponds to the ordered sequence of vertices $y_{r_2-2},\ldots,y_1,u,x_1,\ldots,x_{r_1}$,
		\item $\mf{sl}(F_2)$ corresponds to the ordered sequence of vertices $y_{r_2-2},\ldots,y_1,u,z_1,\ldots,z_{r_3}$, and
		\item $\mf{sl}(F_3)$ corresponds to the ordered sequence of vertices $z_2,\ldots,z_{r_3}$.
	\end{itemize}
	In particular, $\mf{g}^{(x_1)} = \mf{sl}(F_0) \times \mf{sl}(F_2)$ and $\mf{g}^{(z_1)} = \mf{sl}(F_1) \times \mf{sl}(F_3)$.
\end{example}

\subsubsection{Representations}\label{bg:reps}
Let $V$ be a representation of $\mf{g}$. For $\lambda \in \mf{h}$, define the \emph{$\lambda$-weight space of $V$} to be
\[
V_\lambda = \{v \in V : h v = \lambda(h)v \text{ for all }h \in \mf{h}\}.
\]
If $V_\lambda \neq 0$, then we say $\lambda$ is a \emph{weight} of $V$. A nonzero vector $v \in V_\lambda$ is a \emph{highest weight vector} if $\mf{n}^+ v = 0$. If such a $v$ generates $V$ as a $\mf{g}$-module, then we say $V$ is a \emph{highest weight module} with highest weight $\lambda$.

Let $\mc{U}$ denote the universal enveloping algebra functor. Representations of $\mf{g}$ are equivalent to modules over $\mc{U}(\mf{g})$. Given $\lambda \in \mf{h}^*$, the \emph{Verma module} $M(\lambda)$ is defined to be
\[
M(\lambda) = \mc{U}(\mf{g}) \otimes_{\mc{U}(\mf{b}^+)} \mb{C}_\lambda.
\]
Here $\mb{C}_\lambda$ is the $\mf{b}^+$-module where $\mf{h}$ acts by $\lambda$ and $\mf{n}^+$ acts trivially. All the weights of $M(\lambda)$ are in $\lambda + Q$. If $v \in V_\lambda$ is a highest weight vector, then there is a map $M(\lambda) \to V$ sending $1 \mapsto v$. If $V$ is a highest weight module then this map is surjective.

Every Verma module $M(\lambda)$ has a unique maximal proper submodule $J(\lambda)$. It follows that $L(\lambda) = M(\lambda)/J(\lambda)$ is an irreducible highest weight module with heighest weight $\lambda$, and any such module is isomorphic to $L(\lambda)$.

Let $\omega_i \in \mf{h}^*$ be the basis dual to $\alpha_i^\vee \in \mf{h}$. Explicitly, $\omega_i$ is the linear combination of $\alpha_i$ given by the $i$-th column of $A^{-1}$. These are the \emph{fundamental weights}, and the representations $L(\omega_i)$ are called \emph{fundamental representations}. Their nonnegative integral span is the collection of \emph{dominant weights}.

One can alternatively work with lowest weights instead of highest weights, interchanging the roles of positive and negative parts of the Lie algebra in all of the preceding. The irreducible representation with lowest weight $-\lambda$ is $L(\lambda)^\vee$, where $(-)^\vee$ represents the ``restricted'' dual. That is, $V^\vee \coloneqq \bigoplus V_\lambda^*$ for a weight module $V$ with finite-dimensional weight spaces. One has $V^\vee \subseteq V^*$, with equality when $V$ is finite-dimensional.

\subsubsection{Weight grading on representations}\label{sec:grading}
The decomposition of $L(\lambda)$ into weight spaces gives an $\mf{h}^*$-grading on $L(\lambda)$. Moreover, all the weights of $L(\lambda)$ are in the translate $\lambda + \bigoplus_{i\in T} \mb{Z} \alpha_i$ of the root lattice.

In \S\ref{bg:lie-grading1} it was described how singling out a vertex $t\in T$ allows us to impose a $\mb{Z}$-grading on $\mf{g}$ by considering only the coefficient of $\alpha_t$ in the $\mf{h}^*$-grading. This works for representations $L(\lambda)$ as well: if $v \in L(\lambda)$ is a highest weight vector then $h_t v = \langle h_t, \lambda \rangle v$ and the eigenvalues for the action of $h_t$ on $L(\lambda)$ are $\langle h_t, \lambda \rangle, \langle h_t, \lambda \rangle-1,\ldots$, terminating iff $L(\lambda)$ is finite-dimensional. The eigenspaces give the $t$-graded components. Each one is a representation of the subalgebra $\mf{g}^{(t)} \times \mb{C}h_t \subset \mf{g}$. In particular, $v$ is a highest weight vector for the top graded component, thus this component is the representation of $\mf{g}^{(t)}$ with highest weight $\sum_{i \neq t} c_i \omega_i$ if $\lambda = \sum_{i\in T} c_i \omega_i$. When we discuss the $t$-grading on a representation $L(\lambda)$ (resp. $L(\lambda)^\vee$), we will typically shift the indexing so that the top (resp. bottom) component is in $t$-degree zero.

\subsubsection{Exponential action and Baker-Campbell-Hausdorff}\label{bg:lie-exp}
Let $\bigoplus_{i>0} \mb{L}_i$ be a strictly positively graded Lie algebra, e.g. $\mf{n}_t^\pm$ for some $t\in T$. Its bracket naturally extends to one on $\mbf{L} = \prod_{i>0} \mb{L}_i$. Suppose $R$ is an $R_0$-algebra on which elements $X\in \mb{L}_i$ act by locally nilpotent $R_0$-linear derivations. Here ``locally nilpotent'' means that for any Lie algebra element $X$ and ring element $f \in R$, we have $X^N f = 0$ for $N \gg 0$. Then for any $X \in \mbf{L}$, the exponential
\[
\exp X = \Id + X + \frac{1}{2!} X^2 + \frac{1}{3!}X^3 + \cdots
\]
defines an $R_0$-algebra automorphism of $R$.

Moreover, given Lie algebra elements $X,Y \in \mbf{L}$, the Baker-Campbell-Hausdorff formula gives a well-defined element $Z \in \mbf{L}$ such that $\exp Z = \exp X \exp Y$:
\[
Z = X + Y + \frac{1}{2}[X,Y] + \frac{1}{12}[X,[X,Y]] - \frac{1}{12}[Y,[X,Y]] + \cdots.
\]
We will not need the explicit expression for $Z$, only that such an expression exists in terms of iterated commutators.

\subsection{Schubert varieties}\label{bg:sch}
For this section we assume that $T$ is a Dynkin diagram. There is a unique simply connected Lie group $G$ with Lie algebra $\mf{g}$. The representations of $G$ correspond to those of $\mf{g}$, and for a fundamental weight $\omega_t$, the action of $G$ on the highest weight line in $\mb{P}(L(\omega_t))$ has stabilizer $P_t^+$, the subgroup of $G$ corresponding to the maximal parabolic subalgebra $\mf{p}_t^+$ as defined in \S\ref{bg:lie-grading1}. Hence the orbit of this highest weight line can be identified with the homogeneous space $G/P_t^+$. For Dynkin type $A_n$ with the standard labeling of vertices, this produces the Grassmannian $\Gr(t,n+1)$. Accordingly, the reader not familiar with this construction may think of $G/P_t^+$ as a ``generalized Grassmannian.''

In the interest of only introducing what is necessary, we will not take this perspective and we will instead define all the objects we need completely algebraically. However, we will use notation which alludes to the geometric construction. For instance we will define the variety ``$G/P_t^+$'' via its homogeneous coordinate ring, without actually defining $G$, the subgroup $P_t^+$, or what it means to take a quotient. In a similar vein we will also define notions such as ``$P_t^-$-orbits,'' although we will include some remarks to motivate such terminology.

That being said, we will occasionally want to use legitimate group actions, but the group in each case will either be the special or general linear group, or the exponential of some Lie algebra whose elements have locally nilpotent actions (so that the exponential is algebraically well-defined).

\subsubsection{Definition of the homogeneous space $G/P$}\label{sec:GP-homog-ring}
Pick a vertex $t \in T$, let $\omega_t$ be the corresponding fundamental weight, and $L(\omega_t)$ the irreducible representation with highest weight $\omega_t$. Let $L(\omega_t)^\vee$ be the irreducible representation with lowest weight $-\omega_t$; since these representations are finite-dimensional we have $L(\omega_t)^* = L(\omega_t)^\vee$. Let $\mf{A}$ be the $\mb{C}$-algebra
\[
\bigoplus_{n\geq 0} L(n\omega_t)^\vee = (\Sym L(\omega_t)^\vee) / I_\Plucker
\]
where, viewing $\Sym L(\omega_t)^\vee$ as polynomial functions on $L(\omega_t)$, the ideal $I_\Plucker$ is comprised of subrepresentations vanishing on a highest weight vector $v \in L(\omega_t)$. As an ideal it is generated by quadrics, namely the irreducible representations in $S_2 L(\omega_t)^\vee$ other than $L(2\omega_t)^\vee$. These are the \emph{Pl\"ucker relations}.

The ring $\mf{A}$ is graded with $L(\omega_t)^\vee$ in degree 1. We define
\[
G/P_{t}^+ \coloneqq \operatorname{Proj} \mf{A} \subset \mb{P}(L(\omega_t)).
\]
\begin{remark}
	By taking just the $\mb{C}$-points of this scheme, one recovers the object described at the beginning of this section.
\end{remark} 

\subsubsection{Weyl group and subgroups}
Let $W$ denote the Weyl group associated to $T$. It is generated by the  \emph{simple reflections} $\{s_i\}_{t\in T}$. Explicitly,
\[
W = \langle \{s_i\}_{i\in T} \mid (s_i s_j)^{m_{ij}} = 1\rangle
\]
where $m_{ij} = 1$ if $i=j$, $m_{ij} = 2$ if $i,j \in T$ are not adjacent, and $m_{ij} = 3$ if $i,j \in T$ are adjacent. Since we are assuming $T$ to be a Dynkin diagram, $W$ is finite.

The Weyl group acts on $\mf{h}^*$: the simple reflections act via
\[
s_i(\lambda) = \lambda - \langle \alpha_i^\vee, \lambda\rangle \alpha_i.
\]
\begin{remark}\label{rem:Weyl-action-lift}
	It is also possible to lift (non-uniquely) each element $\sigma\in W$ to the group $G$. We can circumvent dealing with the group $G$ by noting that each individual simple reflection $s_t$ can be lifted to an element of the group $\SL_2$ corresponding to the Lie algebra $\mf{sl}_2$ spanned by $e_t, f_t, \alpha_t^\vee$. This group acts on $\mf{g}$ and its representations $L(\lambda)$ for $\lambda$ dominant. We can get an action of any $\sigma\in W$ by writing it as a product of simple reflections.
	
	We will often abuse notation and write e.g. $\sigma v$ with $\sigma\in W$ and $v \in L(\lambda)$ when we really mean to pick a lift of $\sigma$ and then act by that on $v$. One explicit choice lifting $s_t$ is $\exp(f_t) \exp (-e_t) \exp (f_t)$, though the particular choice will not matter for our purposes.
\end{remark}

The \emph{length} $\ell(\sigma)$ of an element $\sigma \in W$ is the minimum number of simple reflections needed to express $\sigma$ as a product of simple reflections. There is a unique longest element $w_0 \in W$.

If $t \in T$, we let $W_{P_t} \subset W$ denote the subgroup generated by all simple reflections other than $s_t$. Every coset $W/W_{P_t}$ has a minimal length representative, and the set of such representatives is denoted by $W^{P_t}$.

\subsubsection{Pl\"ucker coordinates and Schubert cells}\label{subsec:cells-P-orbits}
Let $v \in L(\omega_t)$ be a highest weight vector. By abuse of notation we also write $v \in \mb{P}(L(\omega_t))$ for its span. This is the \emph{Borel-fixed point} of $G/P_t^+$. The \emph{torus-fixed points} are $\sigma v$ for $\sigma \in W^{P_t}$. The $\mb{C}$-points of $G/P_t^+$ decompose into a disjoint union of \emph{Schubert cells}
\[
G/P_t^+ = \coprod_{\sigma \in W^{P_t}} C^\sigma
\]
where $C^\sigma$ is the orbit of $\sigma v$ under the action of $\exp \mf{n}^-$. For $j \in T$ and $[\sigma] \in W_{P_j} \backslash W / W_{P_t}$, we say that the union of Schubert cells
\[
\coprod_{\sigma' \in W^{P_t}, [\sigma']=[\sigma]} C^\sigma
\]
is a \emph{$P_{j}^-$-orbit}, where the union is taken over $\sigma'$ representing the same double coset as $\sigma$. See also Remark~\ref{rem:P-orbit}.

The variables of $\mf{A}$ are called \emph{Pl\"ucker coordinates}. Let $p_e$ denote a lowest weight vector of $L(\omega_t)^\vee$ and for $\sigma\in W^{P_t}$ let $p_\sigma = \sigma p_e$. The set $\{p_\sigma : \sigma\in W^{P_t}\}$ is the set of \emph{extremal Pl\"ucker coordinates}. The representation $L(\omega_t)^\vee$ may have weights other than those in the $W$-orbit of $-\omega_t$; these remaining Pl\"ucker coordinates are called non-extremal. The representation is \emph{miniscule} if all weights belong to the same $W$-orbit, in which case all Pl\"ucker coordinates are extremal.

The Pl\"ucker coordinate $p_\sigma$ vanishes on the Schubert cell $C^w$ iff $\sigma \not\geq w$ in the (strong) Bruhat order.

\subsubsection{The Schubert variety $X^w$}\label{bg:sch-schubert-var}
Using the same notation as above, the Schubert variety $X^w$ is the closure of the Schubert cell $C^w$. Let $\mf{n}^- w v$ denote the $\mf{n}^-$-representation generated by $w v$ inside of $L(\omega_t)$. This is a \emph{Demazure module}. Since Schubert varieties are linearly defined, the homogeneous defining ideal of $X^w$ is the ideal of $\mf{A}$ generated by the elements of $L(\omega_t)^\vee = L(\omega_t)^*$ which vanish on $\mf{n}^- w v$.

We will only be interested in a very specific Schubert variety, so we will now specialize the discussion accordingly. For the remainder of \S\ref{bg:sch}, we will always assume that $p=2$ and $t = x_1$, so that the distinguished node is the only node on the left arm. Let $w = s_{z_1} s_u s_{x_1}$; it is a minimal length representative for its coset in $W/W_{P_{x_1}}$. We define $X^w$ as above. The Pl\"ucker coordinates which vanish on $\mf{n}^- w v \subset L(\omega_{x_1})$ are exactly those in the bottom $z_1$-graded component $F_1 \subset L(\omega_{x_1})^\vee$ (c.f. Example~\ref{ex:sl-subalgebras}). Explicitly these are the following $q+2$ coordinates, all of which are extremal:
\[
p_e, p_{s_{x_1}}, p_{s_u s_{x_1}},\ldots, p_{s_{y_{q-1}} \cdots s_{y_1} s_u s_{x_1}}.
\]
All Schubert varieties are by construction preserved under the exponential action of $\mf{n}^-$, but this one is moreover preserved under the action of $\SL(F_1) \times \SL(F_3)$ corresponding to the subdiagram $T - \{z_1\}$. Hence it is preserved under the action of the semidirect product $\exp(\mf{n}_{z_1}^-)\rtimes (\SL(F_3)\times \SL(F_1))$.

\begin{remark}\label{rem:P-orbit}
	This semidirect product is closely related to the group $P_{z_1}^-$ whose definition we have omitted. Although they are not equal, they have the same orbits on $G/P_{x_1}^+$. The Schubert variety $X^w$ can also be described as the complement of the open $P_{z_1}^-$-orbit.
\end{remark}

\subsubsection{Affine patches}\label{bg:sch-affine-patches}
The scheme $G/P_t^+$ is covered by the open sets $\sigma C^e \coloneqq \{p_\sigma \neq 0\}$ for $\sigma \in W^{P_t}$. Each of these is isomorphic to affine space of dimension $\ell (w^{P_t}_0)$ where $w^{P_t}_0 \in W^{P_t}$ is a minimal length representative of $[w_0] \in W/W_{P_t}$. This is the dimension of $\mf{n}_t^-$, the negative part of $\mf{g}$ in the $t$-grading, and this algebra can be used to explicitly identify $\sigma C^e$ with affine space as follows.

Let $S$ be the polynomial ring $\Sym(\mf{n}_t^-)^*$ and let
\[
Z \in \mf{n}_t^- \otimes (\mf{n}_t^-)^* \subset \mf{n}_t^- \otimes S
\]
be adjoint to the identity on $\mf{n}_t^-$. We think of $Z$ as the ``generic element'' of $\mf{n}_t^-$. Then $\sigma C^e$ may be parametrized as
\[
S \xto{i_{x_1}^\mathrm{top}} S \otimes L(\omega_{x_1}) \xto{\sigma \exp Z} S\otimes L(\omega_{x_1})
\]
where $i_{x_1}^\mathrm{top}$ denotes the inclusion of the top $x_1$-graded component.

\subsubsection{The coordinate ring of $X^w$ restricted to an open cell}
Combining \S\ref{bg:sch-affine-patches} with \S\ref{bg:sch-schubert-var} and using the same notation, we get that the entries of the composite
\[
d_1^*\colon S \xto{i_{x_1}^\mathrm{top}} S \otimes L(\omega_{x_1}) \xto{\sigma \exp Z} S\otimes L(\omega_{x_1}) \xto{p_{z_1}^\mathrm{top}} S \otimes F_1^*
\]
give the equations cutting out $X^w$ restricted to $\sigma C^e$, where $p_{z_1}^\mathrm{top}$ denotes the projection onto the top $z_1$-graded component. Defining $d_1$ to be the dual of this composite, its cokernel is the coordinate ring of $X^w \cap \sigma C^e$. Moreover, it is shown in \cite{NW-examples} how to extend this to a free resolution: define $d_2$ to be
\[
d_2 \colon S \otimes F_2 \xto{i_{x_1}^\mathrm{top}} S \otimes L(\omega_{y_d}) \xto{\sigma \exp Z} S\otimes L(\omega_{y_d}) \xto{p_{z_1}^\mathrm{top}} S \otimes F_1
\]
and $d_3$ to be the dual of
\[
d_3^* \colon S \otimes F_2^* \xto{i_{x_1}^\mathrm{top}} S \otimes L(\omega_{z_t}) \xto{\sigma \exp Z} S\otimes L(\omega_{z_t}) \xto{p_{z_1}^\mathrm{top}} S \otimes F_3^*.
\]
\begin{thm}\label{thm:intermediate-res}
	The maps defined above assemble into a resolution of the coordinate ring of $X^w \cap \sigma C^e$:
	\[
	0 \to S \otimes F_3 \xto{d_3} S \otimes F_2 \xto{d_2} S \otimes F_1 \xto{d_1} S.
	\]
\end{thm}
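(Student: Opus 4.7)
The plan is to reduce the theorem to three assertions: (i) the image of $d_1$ equals the defining ideal of $X^w \cap \sigma C^e$ in $S$; (ii) the displayed sequence is a complex; and (iii) the complex is acyclic.

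Assertion (i) is essentially built into the construction. By \S\ref{bg:sch-schubert-var}, the homogeneous defining ideal of $X^w \subset G/P_{x_1}^+$ is generated by the Pl\"ucker coordinates in $F_1^* \subset L(\omega_{x_1})^*$, i.e.\ those vanishing on the Demazure module $\mf{n}^- wv$. Pulling these back along the parametrization of $\sigma C^e$ from \S\ref{bg:sch-affine-patches} yields exactly the entries of $d_1^*$, so the image of $d_1$ is the defining ideal of $X^w \cap \sigma C^e$ in $S$.

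For (iii), granted (ii), acyclicity follows from Cohen-Macaulayness. Schubert varieties in $G/P_{x_1}^+$ are Cohen-Macaulay (Ramanathan), so $S/\operatorname{im} d_1$ is a Cohen-Macaulay $S$-module of codimension $\ell(w) = 3$, hence of projective dimension $3$. The ranks of $d_1, d_2, d_3$ can be computed at a generic point to be $p - 1, q+1, r-1$, matching the expected Betti numbers $(1, p+q, q+r, r-1)$. Combined with the known depth $3$ of $\operatorname{im} d_1$, this verifies the hypotheses of the Buchsbaum--Eisenbud acyclicity criterion, whose depth-of-minors conditions are automatic in this polynomial-ring setting once the cokernel is Cohen-Macaulay of the stated codimension.

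The main obstacle is (ii), the vanishing $d_1 \circ d_2 = 0$ and $d_2 \circ d_3 = 0$. Each $d_i$ is $\sigma \exp Z$ sandwiched between the inclusion of a top $x_1$-graded component and the projection onto a bottom $z_1$-graded component of a fundamental representation. One direct approach interprets $d_1 \circ d_2$ as a $\mf{g}$-equivariant natural transformation whose image lies in the span of quadratic Pl\"ucker relations inside $S_2 L(\omega_{x_1})^*$; since these cut out $G/P_{x_1}^+$, they vanish on $X^w$, and one promotes this from a relation in $S/\operatorname{im} d_1$ to a genuine relation in $S$ via a Demazure-module exactness argument. The vanishing $d_2 \circ d_3 = 0$ then follows by the dual computation, since $d_3^*$ is defined symmetrically to $d_2$ with the $y$- and $z$-arms of $T$ interchanged. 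I anticipate the most conceptual route, however, is via the Weyman geometric technique: construct the complex globally on $G/P_{x_1}^+$ as a complex of vector bundles, in which vanishing of compositions follows from Kempf-style collapsing applied to a Bott--Samelson resolution of $X^w$, and then restrict to the affine chart $\sigma C^e$. This would sidestep case-by-case weight computations across the ADE types.
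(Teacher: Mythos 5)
Your step (i) matches the paper: the entries of $d_1^*$ are by construction the Pl\"ucker coordinates in the bottom $z_1$-graded component $F_1^*\subset L(\omega_{x_1})^*$, pulled back along the parametrization of $\sigma C^e$, so $\operatorname{im}d_1$ is the defining ideal of $X^w\cap \sigma C^e$. For (ii), the paper simply invokes the complex-verification argument of the earlier reference; your proposed routes via Pl\"ucker relations or geometric collapsing are speculative but plausible. The substantive problem is (iii).

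Your assertion that, once $S/\operatorname{im}d_1$ is Cohen--Macaulay of codimension $3$ and the generic ranks add up, the ``depth-of-minors conditions are automatic in this polynomial-ring setting'' is false, and this is precisely the non-trivial content of the acyclicity proof. Here is a counterexample over $S=k[x,y,z]$. Take $d_1=(x,y,z)$, let $\sigma_1,\sigma_2,\sigma_3$ be the Koszul syzygies, set $d_2=(x\sigma_1\mid\sigma_2\mid\sigma_3)$, and set $d_3=(x,xy,xz)^T$. One checks directly that $d_1d_2=0$ and $d_2d_3=0$, that the generic ranks are $1,2,1$ (so $f_i=r_i+r_{i+1}$ for the format $(1,3,3,1)$), that the differentials are minimal, and that $H_0=S/(x,y,z)$ is Cohen--Macaulay of codimension $3$. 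Yet $\ker d_2=S\cdot(1,y,z)^T$ while $\operatorname{im}d_3=xS\cdot(1,y,z)^T$, so $H_2\cong S/(x)\neq 0$: the complex is not acyclic. The Buchsbaum--Eisenbud criterion detects this because $I_{1}(d_3)=(x)$ has grade $1<3$. So the grade bounds on $I_{r_i}(d_i)$ must be established explicitly. This is what the paper's cited argument actually does: it locates powers of Pl\"ucker coordinates among the appropriate minors of $d_2$ and $d_3$, and the vanishing locus of those Pl\"ucker coordinates in the chart $\sigma C^e$ has high enough codimension to force the needed grade inequalities. Your plan skips exactly this step, which is where the real work lies.
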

\begin{proof}
	The paper \cite{NW-examples} actually only discussed the case $\sigma = w_0$, in which case applying $w_0$ and then projecting onto the top $z_1$-graded component is the same as projecting onto the bottom $z_1'$-graded component, where $z_1'$ is the node on $T$ ``dual'' to $z_1$ if $T$ has exceptional duality (c.f. \cite[Remark 2.2]{NW-examples}). That is the manner in which the differentials are presented in the referenced paper. But this was just a matter of perspective and not the result of any technical limitation, as we will discuss briefly around Theorem~\ref{thm:non-local-gen}. The proof of \cite[Lemma 2.4]{NW-examples} works exactly the same, showing that this is a complex. For acyclicity, the proof of \cite[Theorem 5.1]{NW-examples} shows how one can find powers of the Pl\"ucker coordinates among the minors of $d_i$. The only difference is that these are now written in terms of the affine coordinates on the patch $\sigma C^e$ instead of $w_0 C^e$, but that is inconsequential to the proof.
\end{proof}

Localizing this at the torus-fixed point $\sigma v\in \sigma C^e$, corresponding to the ideal of variables in $S$, gives a resolution of the local ring $\mc{O}_{X^w,\sigma v}$ over $\mc{O}_{G/P_t^+, \sigma v}$.

\section{The generic ring $\Rgen$}\label{bg:Rgen}
In this section, we introduce the machinery of higher structure maps and show how it can be used to produce a homomorphism $\mf{A} \to R$ (c.f. \S\ref{sec:GP-homog-ring}) given a grade 3 perfect ideal $I \subset R$ with Dynkin Betti numbers. In the next section we will show that the homomorphism $\mf{A} \to R$ determines a well-defined map $\Spec R \to G/P_{x_1}$ as alluded to in the introduction.

\begin{assumption}\label{ass:base-field}
	All rings throughout are assumed to be commutative $\mb{C}$-algebras. This streamlines the exposition regarding representation theory and Schubert varieties, but our main results in \S\ref{sec:classify} hold more generally over $\mb{Q}$ and we will state them accordingly. Indeed, the Lie algebra $\mf{g}$ from \S\ref{sec:background} may be defined using the same generators and relations over $\mb{Q}$, which results in the \emph{split form} of $\mf{g}$ whose representation theory parallels the situation over $\mb{C}$. The construction and decomposition of $\Rgen$ discussed in the present section remain valid.
	
	We will often make arguments using grade and regular sequences. These arguments remain valid in the non-Noetherian setting provided one uses Northcott's notion of ``true grade,'' adjoining additional indeterminates as needed. The arguments remain otherwise unaffected, so we will not belabor this technicality.
\end{assumption}

\subsection{Construction of $\Rgen$}\label{sec:Rgen:construction}
We first recall the construction of the generic pair $(\Rgen,\Fgen)$ from \cite{Weyman89}.

\subsubsection{Generic free resolutions}\label{bg:generic-free-res}
Let $\mb{F}$ be a complex of free modules
\[
\mb{F} \colon 0 \to F_c \to F_{c-1} \to \cdots \to F_1 \to F_0
\]
over some ring $R$. The \emph{format} of $F$ is the sequence of numbers $\underline{f} = (f_0,f_1,\ldots,f_c)$ where $f_i = \rank F_i$.

Let $S$ be a ring and $\mb{G}$ a resolution over $S$ with format $(f_0,\ldots,f_c)$. We say that $(S,\mb{G})$ is \emph{generic} for the given format if, given any resolution $\mb{F}$ of the same format over some ring $R$, there exists a map $\varphi\colon S \to R$ specializing $\mb{G}$ to $\mb{F}$. Note that we do not require the map $\varphi$ to be unique: Bruns showed that this is a mandatory concession for $c \geq 3$ \cite{Bruns84}. We will only be concerned with $c=3$ in this paper and we now restrict to this case.

Fix positive integers $r_1 \geq 1$, $r_2 \geq 2$, $r_3 \geq 1$ and define $f_0 = r_1$, $f_1 = r_1 + r_2$, $f_2 = r_2 + r_3$, $f_3 = r_3$. Let $\underline{f} = (f_0,f_1,f_2,f_3)$ and $F_i = \mb{C}^{f_i}$. For the format $\underline{f}$, a generic pair $(\Rgen(\underline{f}),\Fgen(\underline{f}))$ was constructed in \cite{Weyman89} and \cite{Weyman18}. Henceforth when we say \emph{the} generic ring or resolution, we refer to this construction specifically. We will typically suppress $\underline{f}$ from the notation. One can more generally consider formats where $f_0 \geq r_1$, but we will not do so here.

\subsubsection{The first structure theorem of Buchsbaum and Eisenbud}
The first structure theorem of \cite{Buchsbaum-Eisenbud74} is essential to the construction of $\Rgen$. We recall it here with two small adjustments: we state it only for $c=3$ and we avoid identifying top exterior powers with the base ring in the interest of doing things $\GL(F_i)$-equivariantly. We say that $\mb{F}$ is \emph{acyclic in grade $g$} if $\mb{F} \otimes R_\mf{p}$ is acyclic for all primes $\mf{p} \in \Spec R$ with $\grade \mf{p} \leq g$.

\begin{thm}\label{thm:BE1}
	Let $0 \to F_3 \xto{d_3} F_2 \xto{d_2} F_1 \xto{d_1} F_0$ be a complex of free modules, acyclic in grade 1, of format $(f_0,f_1,f_2,f_3)$ over $R$. Then there are uniquely determined maps $a_3,a_2,a_1$, constructed as follows:
	\begin{itemize}
		\item $a_3$ is the top exterior power
		\[
		a_3 \colon \bigwedge^{f_3} F_3 \to \bigwedge^{f_3} F_2.
		\]
		\item $a_2$ is the unique map making the following diagram commute:
		\[
		\begin{tikzcd}
			\bigwedge^{r_2} F_2 \ar[rr,"\bigwedge^{r_2}d_2"] \ar[dr,"-\wedge a_3",swap] && \bigwedge^{r_2} F_1\\
			& \bigwedge^{f_3} F_3^* \otimes \bigwedge^{f_2} F_2 \ar[ur,dashed,"a_2",swap]
		\end{tikzcd}
		\]
		\item $a_1$ is the unique map making the following diagram commute:
		\[
		\begin{tikzcd}
			\bigwedge^{f_0} F_1 \ar[rr,"\bigwedge^{r_1}d_1"] \ar[dr,"-\wedge a_2",swap] && \bigwedge^{f_0} F_0\\
			& \bigwedge^{f_3} F_3 \otimes \bigwedge^{f_2} F_2^* \otimes \bigwedge^{f_1} F_1 \ar[ur,dashed,"a_1",swap]
		\end{tikzcd}
		\]
	\end{itemize}
\end{thm}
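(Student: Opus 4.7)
The plan is to treat each map in turn. The map $a_3$ requires no real work, while $a_2$ and $a_1$ are handled by analogous arguments separating existence from uniqueness.

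For $a_3$, the definition as the top exterior power $\bigwedge^{r_3} d_3$ is immediate and involves no choice. For $a_2$, uniqueness is the easier part: the codomain $\bigwedge^{f_3} F_3^* \otimes \bigwedge^{f_2} F_2$ of the map $-\wedge a_3$ is a rank-one free $R$-module, so after fixing a basis this map sends the basis element $e_I \in \bigwedge^{r_2} F_2$ (indexed by an $r_2$-subset $I$) to $\pm m_{I^c}$, the signed top $r_3 \times r_3$ minor of $d_3$ on the complementary rows $I^c$. Thus the image of $-\wedge a_3$ is exactly the ideal $I_{r_3}(d_3) \subset R$ of maximal minors of $d_3$. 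The Buchsbaum--Eisenbud exactness criterion applied to the grade-$1$ acyclicity hypothesis guarantees that $I_{r_3}(d_3)$ contains a non-zero-divisor. Any difference of two valid choices of $a_2$ is then a map from a rank-one free module into the free module $\bigwedge^{r_2} F_1$ annihilated by this non-zero-divisor, hence zero.

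For existence of $a_2$, we seek $\tilde a_2 \in \bigwedge^{r_2} F_1$ with $\bigwedge^{r_2} d_2(e_I) = \pm\, m_{I^c} \cdot \tilde a_2$ for every $r_2$-subset $I$. Equivalently we need the Pl\"ucker-type compatibilities
\[
m_{I^c} \cdot \bigwedge^{r_2} d_2(e_{I'}) \;=\; \pm\, m_{I'^c} \cdot \bigwedge^{r_2} d_2(e_I) \quad\text{in } \bigwedge^{r_2} F_1
\]
for all pairs $I, I'$. These come from the syzygy relation $d_2 d_3 = 0$: writing $v_j = d_2(e_j) \in F_1$, the columns of $d_3$ impose linear dependencies $\sum_j (d_3)_{j,k} v_j = 0$, which via Laplace expansion translate into the identities above. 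A cleaner alternative is to reduce to the split case: at any minimal prime $\mathfrak{p}$, the grade-$1$ acyclicity hypothesis makes $\mathbb{F} \otimes R_\mathfrak{p}$ into a bounded acyclic complex of projectives, which is automatically split exact; in that case the identity is trivially verified in a basis adapted to the splitting. Pulling back across all minimal primes and using torsion-freeness of $\bigwedge^{r_2} F_1$ then produces a well-defined $\tilde a_2$ globally.

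For $a_1$ the argument is identical, with $d_1$ and $a_2$ playing the roles of $d_2$ and $a_3$. Uniqueness uses that the entries of $a_2$ generate (up to the rank-one factor $\bigwedge^{f_3} F_3 \otimes \bigwedge^{f_2} F_2^*$) the ideal $I_{r_2}(d_2)$, which has grade $\geq 2 \geq 1$ by the exactness criterion, hence contains a non-zero-divisor. Existence follows from the analogous Pl\"ucker-type compatibilities, now derived from $d_1 d_2 = 0$. The main obstacle throughout is verifying these Pl\"ucker identities cleanly. A coordinate computation is feasible but tedious; the most conceptual route is to establish them in the generic case, where the entries of the $d_i$ are independent indeterminates over $\mathbb{Z}$ and all grade conditions hold automatically, and then specialize to any complex of the given format via the specializing ring homomorphism. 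Since all of the constructions involved are polynomial in the matrix entries, an identity that holds generically holds universally.
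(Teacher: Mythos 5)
The paper does not prove Theorem~\ref{thm:BE1}; it is recalled from \cite{Buchsbaum-Eisenbud74}, so there is no in-paper proof to compare against. Evaluating your proposal on its own terms: the uniqueness arguments are correct and cleanly stated, but the existence argument for $a_2$ (and hence $a_1$) has a real gap, and the ``generic case'' fallback does not work as written.

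The gap in existence: you reduce to verifying the Pl\"ucker-type compatibilities $m_{I^c}\cdot\bigwedge^{r_2}d_2(e_{I'}) = \pm\,m_{I'^c}\cdot\bigwedge^{r_2}d_2(e_I)$, and claim this is ``equivalent'' to the existence of $\tilde a_2$. It is not: these identities are necessary but not sufficient. They guarantee that the putative element $\tilde a_2 = \bigwedge^{r_2}d_2(e_I)/m_{I^c}$ is well defined in each localization $R_{m_{I^c}}$ and that the local definitions agree on overlaps, but they do not by themselves place $\tilde a_2$ in $\bigwedge^{r_2}F_1$ rather than in some localization. (A toy example: over $R=\CC[s,t]/(st)$, the system $s x = s$, $t x = 0$ is compatible in your sense but has no solution $x\in R$.) What is missing is precisely the use of $\grade I_{r_3}(d_3)\geq 2$, which the grade-$1$ acyclicity hypothesis does provide: choose a regular sequence $h_1,h_2$ among the maximal minors of $d_3$, define $\tilde a_2$ over $R_{h_1}$ and over $R_{h_2}$ (this only needs the compatibilities), and conclude $\tilde a_2\in \bigwedge^{r_2}F_1\otimes R_{h_1}\cap \bigwedge^{r_2}F_1\otimes R_{h_2} = \bigwedge^{r_2}F_1$. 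Your phrase ``pulling back across all minimal primes and using torsion-freeness'' cannot substitute for this: the localizations at minimal (or even associated) primes do not form a Zariski cover, so there is no sheaf-gluing argument available there; torsion-freeness lets you detect that an element is zero, not that a compatible family of local elements descends.

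Two smaller points. First, your ``cleanest route'' via the generic case is broken: if the entries of $d_2,d_3$ are independent indeterminates then $d_2d_3\neq 0$, and the compatibility identities visibly fail (they are consequences of the syzygy $d_2d_3=0$, not universal polynomial identities). If one instead works over the generic complex ring, the grade conditions are no longer ``automatic'' and must be verified separately, so this route does not save work. Second, in the uniqueness argument for $a_1$ you assert that the entries of $a_2$ generate $I_{r_2}(d_2)$; what one actually gets directly is an ideal $J$ with $J\cdot I_{r_3}(d_3) = I_{r_2}(d_2)$, hence $J\supseteq I_{r_2}(d_2)$. This still gives $J$ a nonzerodivisor, so your conclusion stands, but the intermediate claim should be stated more carefully. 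Your reduction to the split case at associated primes is a perfectly good way to \emph{verify} the compatibilities (and is arguably tidier than Laplace expansion), provided you use associated primes rather than minimal primes so that $R\hookrightarrow\prod_{\mf{p}}R_\mf{p}$ is actually injective; but that step only establishes the identities, not the existence of $\tilde a_2$, which still requires the $\grade\geq 2$ gluing above.
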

Theorem~\ref{thm:BE1} can be used to construct a free complex $\mb{F}^a$ of the given format $\underline{f}$ over a ring $R_a$, called the Buchsbaum-Eisenbud multiplier ring, such that $\mb{F}^a$ is acyclic in grade 1 and is universal with respect to this property. In particular, if $\mb{F}$ is any resolution of the same format over some ring $R$, then there is a unique map $R_a \to R$ specializing $\mb{F}^a$ to $\mb{F}$.

However, $\mb{F}^a$ is not acyclic: letting $d_i$ denote the differentials of $\mb{F}^a$, we have
\[
\grade I_{r_3} (d_3) = 2, \quad \grade I_{r_2} (d_2) = 2, \quad \grade I_{r_1} (d_1) = 1.
\]
We recall the Buchsbaum-Eisenbud acyclicity criterion from \cite{Buchsbaum-Eisenbud73}: a finite free complex is acyclic exactly when $\grade I_{r_i}(d_i) \geq i$ and $f_i = r_{i+1} + r_i$ for all $i$. Hence the failure of $\mb{F}$ to be acyclic can be attributed to the insufficient grade of $I_{r_3} (d_3)$.

To remedy this, the strategy of \cite{Weyman89} is kill cycles in the Koszul complex on $\bigwedge^{r_3} d_3$. Explicitly, writing $\mc{K} = \bigwedge^{f_3} F_3^* \otimes \bigwedge^{f_3}F_2$ so that $\bigwedge^{r_3}d_3$ can be viewed as a map $R_a \to \mc{K}$, the fact that $\grade I_{r_3} (d_3) = 2$ implies the existence of nonzero $H^2$ in
\[
0 \to \bigwedge^0 \mc{K} \to \bigwedge^1 \mc{K} \to \bigwedge^2 \mc{K} \to \bigwedge^3 \mc{K}.
\]
Now the promised connection to representation theory gradually appears. First, in \cite{Weyman89}, the recursive procedure of killing $H^2$ in the complex above was carried out with the aid of a graded Lie algebra $\bigoplus_{i > 0} \mb{L}_i$ where we view $\mb{L}_i$ as residing in degree $-i$. Let
\[
\mb{L}^\vee \coloneqq \bigoplus_{i > 0} \mb{L}_i^*, \qquad \mbf{L} \coloneqq (\mb{L}^\vee)^* = \prod_{i > 0} \mb{L}_i.
\]
We call $\mbf{L}$ the \emph{defect Lie algebra}.

There is a diagram
\begin{equation*}\begin{tikzcd}
		0 \ar[r] & \bigwedge^0 \mc{K} \ar[r] & \bigwedge^1 \mc{K} \ar[r] & \bigwedge^2 \mc{K} \ar[r] & \bigwedge^3 \mc{K}\\
		&& \mb{L}^\vee \ar[u, "p"] \ar[r] & \bigwedge^2 \mb{L}^\vee \ar[u, "q"]
\end{tikzcd}\end{equation*}
where the dual of the lower horizontal map is the bracket in $\mbf{L}$. Let $p_i$ denote the restriction of $p$ to $\mb{L}_i^* \subset \mb{L}^\vee$ and similarly $q_i$ the restriction of $q = \bigwedge^2 p$ to $(\bigwedge^2 \mb{L}^\vee)_i$. The map $p_1$ is defined to lift a cycle constructed using the second structure theorem of \cite{Buchsbaum-Eisenbud74}. Since $\mb{L}^\vee$ is \emph{strictly} positively graded, $(\bigwedge^2 \mb{L}^\vee)_i$ only involves $\mb{L}_j$ for $j < i$, which allows for recursive computation of the cycles $q_m$ and their lifts $p_m$ for $m \geq 2$.

For positive integers $m$, define $R_m'$ to be the ring obtained from $R_a$ by generically adjoining variables for the entries of $p_1,\ldots,p_m$ and quotienting by all relations they would satisfy on a split exact complex (see for instance \cite[Lemma 2.4]{Weyman89}). Let $R_m$ be the ideal transform of $R_m'$ with respect to $I_{r_2}(d_2) I_{r_3}(d_3)$. The ring $\Rgen$ is defined to be the limit of the rings $R_m$, and $\Fgen \coloneqq \mb{F}^a \otimes \Rgen$.

The idea behind this construction is that adjoining the lifts $p_i$ ought to kill $H^2$ in the Koszul complex, and the ideal transform with respect to $I_{r_2}(d_2) I_{r_3}(d_3)$ ensures that we do not generate homology elsewhere in our complex. The acyclicity of $\Fgen$ was reduced to the exactness of certain 3-term complexes (c.f. \cite[Theorem 3.1]{Weyman89}), and this was later proven in \cite{Weyman18}.

\subsection{Exponential action of $\mbf{L}$}
Given a free resolution $\mb{F}$ over some ring $R$, a choice of maps $p_1,\ldots,p_m$ making
\begin{equation}\begin{tikzcd}\label{eq:p-lifting}
		0 \ar[r] & R \ar[r,"\bigwedge^{r_3} d_3"] & \mc{K}\otimes R \ar[r] & \bigwedge^2 \mc{K}\otimes R \ar[r] & \bigwedge^3 \mc{K}\otimes R\\
		&& \mb{L}_i^* \otimes R \ar[u, "p_i"] \ar[r] & (\bigwedge^2 \mb{L}^\vee)_i \otimes R \ar[u, "q_i"]
\end{tikzcd}\end{equation}
commute determines a map $R_m' \to R$. This extends uniquely to the ideal transform $R_m$ since the image of $I_{r_2}(d_2) I_{r_3}(d_3)$ in $R$ has grade at least 2.
\begin{lem}\label{lem:GFR-p-determines-w}
	There is a natural bijection
	\[
	\begin{tikzcd}
		\{\text{maps $w\colon \Rgen \to R$ specializing $\Fgen$ to $\mb{F}$}\} \ar[d,"\simeq",leftrightarrow]\\
		\{\text{choices of $\{p_i\}_{i > 0}$ making \eqref{eq:p-lifting} commute}\}
	\end{tikzcd}
	\]
\end{lem}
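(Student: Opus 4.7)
The plan is to unwind the construction of $\Rgen$ as the filtered colimit of the rings $R_m$, each obtained from $R_a$ by adjoining variables for the entries of $p_1,\ldots,p_m$ (modulo relations holding on a split exact complex) and then passing to the ideal transform with respect to $I_{r_2}(d_2) I_{r_3}(d_3)$. Everything in the construction was designed exactly so that this lemma holds, so the proof is largely a bookkeeping exercise, but it breaks cleanly into a forward map, a backward map, and a mutual-inverse check.

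For the forward direction, given $w \colon \Rgen \to R$ specializing $\Fgen$ to $\mb{F}$, I would transport the universal lifts $p_i$ living over $\Rgen$ along $w$ to obtain $R$-module maps $p_i \colon \mb{L}_i^* \otimes R \to \mc{K} \otimes R$. The commutativity of both squares of \eqref{eq:p-lifting} at each index $i$ is obtained by applying $-\otimes_{\Rgen} R$ to the corresponding commutative squares over $\Rgen$, which hold by the very construction of $\Rgen$.

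For the backward direction, the paragraph just before the lemma hands us the essential step: a choice of $p_1,\ldots,p_m$ making the truncated diagram \eqref{eq:p-lifting} commute determines a ring map $R_m' \to R$ (since $R_m'$ is $R_a$ adjoined with generic entries for the $p_i$ subject to relations automatically satisfied by any actual lift). This map then extends uniquely to the ideal transform $R_m$ because the image of $I_{r_2}(d_2) I_{r_3}(d_3)$ in $R$ has grade at least $2$, which follows from the acyclicity of $\mb{F}$ via the Buchsbaum-Eisenbud criterion (which supplies $\grade I_{r_2}(d_2) \geq 2$ and $\grade I_{r_3}(d_3) \geq 3$). The maps $R_m \to R$ are compatible under the transition maps $R_m \to R_{m+1}$ because these transitions adjoin precisely the entries of $p_{m+1}$, and the chosen $p_{m+1}$ over $R$ dictates their images; passing to the colimit $\Rgen = \varinjlim R_m$ gives the desired $w \colon \Rgen \to R$.

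The two constructions are mutual inverses essentially tautologically: since $\Rgen$ is, after the ideal transform, generated over $R_a$ by the entries of the universal $p_i$'s, the map $w$ is determined by where those entries go, which is exactly the data recorded by the forward direction. The main obstacle I anticipate is the extension step from $R_m'$ to its ideal transform $R_m$, where one needs the grade-$2$ property of $I_{r_2}(d_2) I_{r_3}(d_3)$ and the fact that ideal transforms with respect to grade-$2$ ideals yield unique factorizations of maps into sufficiently nice target rings; in the non-Noetherian setting this is handled using Northcott's true grade as per Assumption~\ref{ass:base-field}. Once this technical point is in place, the rest amounts to verifying that colimits of compatible ring maps exist and that the forward construction factors through the ideal transform—both of which are immediate.
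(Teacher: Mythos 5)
Your proposal is correct and takes essentially the same approach as the paper, whose proof is the one-line observation that the claim follows from the construction $\Rgen = \varinjlim R_m$ together with the discussion immediately preceding the lemma. You have simply unpacked that discussion in full, correctly isolating the key technical point: the unique extension of $R_m' \to R$ across the ideal transform $R_m' \to R_m$, which is enabled by the image of $I_{r_2}(d_2)I_{r_3}(d_3)$ in $R$ having grade at least $2$, as supplied by the Buchsbaum--Eisenbud acyclicity criterion applied to $\mb{F}$.
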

\begin{proof}
	This follows from the preceding discussion since $\Rgen = \lim R_m$.
\end{proof}
Furthermore, having chosen $p_i$ for $i < m$, the diagram \eqref{eq:p-lifting} shows that the non-uniqueness of $p_m$ lifting $q_m$ is exactly $\Hom(\mb{L}_m^* \otimes R,R) = \mb{L}_m \otimes R$. In \cite{Weyman89}, the action of $\mbf{L}$ on $\Rgen$ by derivations is described. Specifically, elements $u \in \mb{L}_n$ act on $\Rgen$ by $R_{n-1}$-linear derivations. It is sufficient to describe how they affect (the entries of) $p_{n+k}$ for $k \geq 0$, and this is as follows: the derivation $D_u$ sends $p_n^*$ to
\[
\mc{K}^* \xto{\bigwedge^{r_3} d_3^*} \Rgen \xto{u} \mb{L}_n \otimes \Rgen 
\]
and $p_{n+k}^*$ to
\[
\mc{K}^* \xto{p_k^*} \mb{L}_k \otimes \Rgen \xto{[u,-]} \mb{L}_{n+k} \otimes \Rgen.
\]
These are just restatements of the formulas given in \cite[Prop. 2.11]{Weyman89} and \cite[Thm. 2.12]{Weyman89} respectively.

These formulas naturally extend to an arbitrary element $X \in \mb{L} = \prod_{i > 0} \mb{L}_i$; the resulting derivation is well-defined because $\mb{L}_{>n}$ acts by zero on $R_n$. In a slight abuse of notation, we will also write $X$ for the corresponding derivation. Homomorphisms $\Rgen \to R$ correspond to $R$-algebra homomorphisms $\Rgen \otimes R \to R$, and the Lie algebra $\mb{L}\otimes R$ acts on $\Rgen \otimes R$.

For $X \in \mb{L} \otimes R$, the action of $\exp X \coloneqq \sum_{i\geq 0} \frac{1}{i!}X^i$ on $\Rgen \otimes R$ is well-defined since every element of $\Rgen \otimes R$ is killed by a sufficiently high power of $X$. Since $X$ acts by an $(R_a\otimes R)$-linear derivation, it follows formally that $\exp X$ acts by an automorphism fixing $R_a\otimes R$. As the next result shows, such automorphisms completely describe the non-uniqueness of the map $\Rgen \to R$ given a particular resolution $(R,\mb{F})$. We defer its proof to Appendix~\ref{sec:Rgen-pfs}.

\begin{customthm}{\ref{thm:parametrize}}
	Let $\mb{F}$ be a resolution of length three over $R$ and let $\Rgen$ be the generic ring for the associated format. Fix a $\mb{C}$-algebra homomorphism $w\colon \Rgen \to R$ specializing $\mb{F}^\mathrm{gen}$ to $\mb{F}$. Then $w$ determines a bijection
	\[
	\mbf{L}\cotimes R \coloneqq \prod_{i > 0} (\mb{L}_i \otimes R) \simeq \{\text{$\mb{C}$-algebra homomorphisms $w'\colon \Rgen \to R$ specializing $\mb{F}^\mathrm{gen}$ to $\mb{F}$}\}.
	\]
	Note that a $\mb{C}$-algebra homomorphism $\Rgen \to R$ can be viewed as an $R$-algebra homomorphism $\Rgen \otimes R\to R$. The correspondence above identifies $X \in \mbf{L} \cotimes R$ with the map $w\exp X$ obtained by precomposing $w$ with the action of $\exp X$ on $\Rgen\otimes R$. 
\end{customthm}

\subsection{The critical representations}\label{sec:crit-reps}

Recall that we have fixed parameters $r_1 \geq 1$, $r_2 \geq 2$, $r_3 \geq 1$, from which our format $\underline{f}$ is defined as $(f_0,f_1,f_2,f_3) = (r_1,r_1+r_2,r_2+r_3,r_3)$. Consider the diagram $T = T_{r_1+1,r_2-1,r_3+1}$
\[
\begin{tikzcd}[column sep=small, row sep=small]
	x_{r_1} \ar[r,dash] & \cdots \ar[r,dash] & \colorX{x_1} \ar[r,dash] & u\ar[r,dash]\ar[d,dash] & y_1\ar[r,dash] &\ar[r,dash] \cdots\ar[r,dash] & y_{r_2-2}\\
	&&& \colorZ{z_1}\ar[d,dash]\\
	&&& \vdots\ar[d,dash]\\
	&&& z_{r_3}
\end{tikzcd}
\]
and let $\mf{g}$ denote the associated Kac-Moody Lie algebra. Following Example~\ref{ex:sl-subalgebras}, we view the Lie algebras $\mf{sl}(F_j)$ as subalgebras of $\mf{g}$.

One of the main observations driving the results of \cite{Weyman18} is that
\begin{align*}
	\mb{L}^\vee = \mf{n}_{z_1}^+ = \bigoplus_{\alpha >_{z_1} 0} \mf{g}_\alpha, \qquad \mbf{L} = \hat{\mf{n}}_{z_1}^- = \prod_{\alpha <_{z_1} 0} \mf{g}_\alpha,
\end{align*}
and that the action of $\bigoplus \mb{L}_i$ on $\Rgen$ extends to an action of $\mf{g}$. We refer to \S\ref{bg:lie-grading1} for explanations regarding notation here.

The decomposition of $\Rgen$ into representations for the product $\mf{sl}(F_0) \times \mf{sl}(F_2) \times \mf{g}$ is detailed in \cite{Weyman18}. Of these representations, there are a few of particular interest, which we call the \emph{critical representations}---they are the ones generated by the entries of the differentials $d_i$ and Buchsbaum-Eisenbud multipliers $a_i$ for $\Fgen$. We denote these representations by $W(d_i)$ and $W(a_i)$ respectively. Let $L(\omega)$ be the irreducible representation with highest weight $\omega$ so that $L(\omega)^\vee$ is the irreducible representation with lowest weight $-\omega$. The aforementioned representations are
\begin{align}
	\begin{split}\label{eq:crit-reps}
		W(d_3) &= F_2^* \otimes L(\omega_{z_{r-1}})^\vee\\
		&= F_2^* \otimes [F_3 \oplus M^* \otimes\bigwedge^{f_0+1} F_1 \oplus \cdots]\\
		W(d_2) &= F_2 \otimes L(\omega_{y_{q-1}})^\vee\\
		&= F_2 \otimes [F_1^* \oplus M^* \otimes F_3^* \otimes \bigwedge^{f_0} F_1 \oplus  \cdots]\\
		W(d_1) &= F_0^* \otimes L(\omega_{x_{p-1}})^\vee\\
		&= F_0^* \otimes [F_1 \oplus M^* \otimes F_3^* \otimes \bigwedge^{f_0+2} F_1 \oplus \cdots]\\
		W(a_3) &= \bigwedge^{f_3} F_2^* \otimes L(\omega_{z_{1}})^\vee\\
		&= \bigwedge^{f_3} F_2^* \otimes [\bigwedge^{f_3} F_3 \oplus \cdots]\\
		W(a_2) &= \bigwedge^{f_2} F_2 \otimes L(\omega_{x_1})^\vee\\
		&= \bigwedge^{f_2} F_2 \otimes [\bigwedge^{r_2}F_1^* \otimes \bigwedge^{f_3} F_3^* \oplus \cdots]\\
		W(a_1) &= \bigwedge^{f_0} F_0^* \otimes \bigwedge^{f_1} F_1 \otimes \bigwedge^{f_2} F_2^* \otimes \bigwedge^{f_3} F_3.
	\end{split}
\end{align}
\begin{remark}\label{rem:gl-vs-sl}
	While the ring $\Rgen$ certainly has an action of $\prod \mf{gl}(F_i)$ by construction, reflected in the decomposition displayed above, this action does \emph{not} come from an inclusion of $\prod \mf{gl}(F_i)$ into $\mf{sl}(F_0)\times \mf{sl}(F_2) \times \mf{g}$. Nor is it correct to say that $\mf{gl}(F_0) \times \mf{gl}(F_2) \times \mf{g}$ acts on $\Rgen$, since the action of $\mf{gl}(F_2)$ does not commute with the action of $\mf{g}$. Rather, if we let
	\[
	M = \bigwedge^{f_3} F_3 \otimes \bigwedge^{f_2} F_2^* \otimes \bigwedge^{f_1} F_1
	\]
	then the $\prod \mf{gl}(F_i)$-equivariant description of $\mb{L}_1$ from \cite{Weyman89} is
	\[
	\mb{L}_1 = \bigwedge^{r_1+1} F_1^* \otimes F_3 \otimes M.
	\]
	Given an irreducible lowest weight representation $L(\omega)^\vee$ of $\mf{g}$, the action of $\prod \mf{gl}(F_i)$ on $L(\omega)^\vee$ can be inferred from its action on any $z_1$-graded component, e.g. the bottom one.
	
	Furthermore, if $\grade I_{r_0}(d_0) \geq 2$, then the Buchsbaum-Eisenbud multiplier $a_1\colon M \to \bigwedge^{f_0} F_0$ from Theorem~\ref{thm:BE1} is an isomorphism, and we may instead view
	\[
	\mb{L}_1 = \bigwedge^{r_1+1} F_1^* \otimes F_3 \otimes \bigwedge^{f_0} F_0
	\]
	which is sometimes more convenient, especially when dealing with resolutions of cyclic modules.
\end{remark}
\begin{remark}\label{rem:a3-in-d3}
	The representation $W(a_3)$ is contained in $S_{r_3}W(d_3)$:
	\[
	W(a_3) = \bigwedge^{f_3} F_2^* \otimes L(\omega_{z_1})^\vee \subseteq \bigwedge^{f_3} F_2^* \otimes \bigwedge^{f_3} L(\omega_{z_{r_3}})^\vee \subseteq S_{r_3} W(d_3).
	\]
\end{remark}

\subsection{Higher structure maps}\label{sec:GFR-HST}
Given a map $w\colon \Rgen \to R$ for a complex $(R,\mb{F})$, we denote by $w^{(i)}$ the restriction of $w$ to the representation $W(d_i) \subset R_\mathrm{gen}$ and $w^{(a_i)}$ the restriction of $w$ to the representation $W(a_i)$. We typically view these maps as being $R$-linear with source $L(\omega)^\vee \otimes R$, e.g. we think of $w^{(3)}$ as an $R$-linear map
\[
w^{(3)}\colon L(\omega_{z_{r_3}})^\vee \otimes R = [F_3 \oplus M^* \otimes\bigwedge^{f_0+1} F_1 \oplus \cdots] \otimes R \to F_2\otimes R
\]
as opposed to a $\mb{C}$-linear map $F_2^* \otimes L(\omega_{z_{r_3}})^\vee \to R$.

\begin{remark}\label{rem:a3-in-d3'}
	From Remark~\ref{rem:a3-in-d3}, we see that the entries of $w^{(a_3)}$ are given by certain $r_3 \times r_3$ minors of $w^{(3)}$. In bottom degree for example, we recover that $a_3 = \bigwedge^{r_3} d_3$.
\end{remark}

The homomorphism $w$ can be recovered from the three restrictions $w^{(i)}$; see Lemma~\ref{lem:V-exp-props} and Lemma~\ref{prop:localization-lemma}. We use $w^{(*)}_j$ to denote the restriction of $w^{(*)}$ to the $j$-th $z_1$-graded component of the representation, indexed so that $j=0$ corresponds to the bottom graded piece. For instance, $w^{(i)}_0 = d_i$ for $i = 1,3$ and $w^{(2)}_0 = d_2^*$. We call the maps $w^{(*)}_{>0}$ (a specific choice of) \emph{higher structure maps} for $\mb{F}$. Let us demonstrate Theorem~\ref{thm:parametrize} from this perspective.
\begin{example}\label{ex:multiplication}
	Consider a free resolution $\mb{F}$ of format $(1,f_1,f_2,f_3)$ resolving $R/I$ where $\operatorname{depth} I \geq 2$, and make the identification described in Remark~\ref{rem:gl-vs-sl} using $a_1$. The structure maps $w^{(i)}_1$ give a choice of multiplicative structure on $\mb{F}$; see \cite[Prop. 7.1]{Lee-Weyman19}. Explicitly, such a resolution has the (non-unique) structure of a commutative differential graded algebra, and the non-uniqueness is evidently seen from the fact that the multiplication $\bigwedge^2 F_1 \to F_2$ may be chosen as any lift in the diagram
	\begin{equation}\label{eq:example-w31}\begin{tikzcd}
			0 \ar[r] & F_3 \ar[r] & F_2 \ar[r] & F_1 \ar[r] & R\\
			&&& \bigwedge^2 F_1 \ar[ul, dashed] \ar[u]
	\end{tikzcd}\end{equation}
	where the map $\bigwedge^2 F_1 \to F_1$ is given by $e_1 \wedge e_2 \mapsto d_1(e_1) e_2 - d_1(e_2) e_1$. Indeed, we have that $\mb{L}_1 = F_3 \otimes \bigwedge^2 F_1^*$, which is exactly the non-uniqueness witnessed here.
	
	Now suppose that $w\colon R_\mathrm{gen} \to R$ (equivalently, $R \otimes R_\mathrm{gen} \to R$) is one choice of higher structure maps for $\mb{F}$, and take an element $X = \sum_{i > 0} u_i \in \mb{L} \otimes R$ using the same notation as before. Let $w' = w\exp(X)$, i.e.
	\[
	w' = w\left(1 + u_1 + \left(\frac{1}{2}u_1^2 + u_2\right) + \cdots\right)
	\]
	Note that $u_k$ maps $W(d_i)_{j}$ to $W(d_i)_{j-k}$. If we restrict the above equation to the representation $W(d_3)$ and expand it degree-wise, we get
	\begin{align*}
		w'^{(3)}_0 &= w^{(3)}_0\\
		w'^{(3)}_1 &= w^{(3)}_1 + w^{(3)}_0 u_1\\
		w'^{(3)}_2 &= w^{(3)}_2 + w^{(3)}_1 u_1 + w^{(3)}_0 \left(\frac{1}{2}u_1^2 + u_2\right)\\
		\vdots
	\end{align*}
	The first equation reflects that the underlying complex is still the same $\mb{F}$. The next equation shows that the new multiplication, viewed as a map $F_2^* \otimes \bigwedge^2 F_1 \to R$, was obtained from the old one by adding the composite
	\[
	F_2^* \otimes \bigwedge^2 F_1 \xto{1 \otimes u_1} F_2^* \otimes F_3 \xto{d_3} R.
	\]
	Here $u_1 \in \mb{L}_1 = F_3 \otimes \bigwedge^2 F_1^*$ could've been any map $\bigwedge^2 F_1 \to F_3$, and this exactly matches what we see in \eqref{eq:example-w31}.
\end{example}

While we are able to relate different choices of $w\colon \Rgen \to R$ specializing $\Fgen$ to a given $\mb{F}$, we have not discussed how to effectively compute a particular choice of $w$ given a resolution $\mb{F}$ in the first place. In principle, the maps $p_m$ may be computed recursively using the diagram \eqref{eq:p-lifting}, but it is less clear how to compute the maps $w^{(i)}$. For this, the following observation will be helpful.
\begin{prop}\label{prop:equivariant-p}
	Let $R$ be a ring and $\mb{F}$ a free resolution
	\[
	0 \to F_3 \otimes R \to F_2 \otimes R \to F_1 \otimes R \to F_0 \otimes R.
	\]
	\begin{enumerate}
		\item Suppose a group $G$ acts on the free modules $F_i \otimes R$ and the differentials of $\mb{F}$ are $G$-equivariant. Since $\prod \GL(F_i \otimes R)$ acts on $\Rgen \otimes R$ and on $\mb{L}_m^*$, we have an induced action of $G$ on $\Rgen \otimes R$ and $\mb{L}_m^*$. If the maps $p_m$ in \eqref{eq:p-lifting} are chosen to be $G$-equivariant, then the induced map $w\colon \Rgen \otimes R \to R$ is also $G$-equivariant.
		\item Suppose $R$ is graded and $\mb{F}$ is a graded free resolution where the differentials are homogeneous of degree zero. This induces a grading on $\Rgen \otimes R$ and $\mb{L}_m^*$ for each $m$, and it is possible to choose all $p_m$ in \eqref{eq:p-lifting} to be homogeneous of degree zero. The corresponding $w \colon \Rgen\otimes R \to R$ is then also homogeneous of degree zero.
	\end{enumerate}
\end{prop}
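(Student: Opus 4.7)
The plan is to proceed by induction on $m$, showing that compatibility with the group action (part 1) or the grading (part 2) is preserved at each stage of the construction $R_a \leadsto R_m' \leadsto R_m$, and then pass to the limit $\Rgen = \lim R_m$. In both parts the map to $R$ is built up by extending the previous stage, so the induction can be set up uniformly and only the preservation of each new piece of structure needs to be verified.

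For part 1, we first observe that the Buchsbaum-Eisenbud multiplier ring $R_a$ carries a natural $\prod \GL(F_i)$-action by its construction via Theorem~\ref{thm:BE1}, and the canonical map $R_a \to R$ is $G$-equivariant because the multipliers $a_i$ are uniquely determined by the $G$-equivariant $d_i$. Assume inductively that $R_{m-1} \to R$ is $G$-equivariant. The ring $R_m'$ is obtained from $R_a$ by adjoining indeterminates forming a module isomorphic to $\mb{L}_m^*$, modulo relations coming from the split-exact complex computation, and all of these data are $\prod \GL(F_i)$-equivariantly defined. The extension $R_m' \to R$ is specified by sending the adjoined generators to the entries of the chosen $p_m$, so if $p_m$ is $G$-equivariant then the extension is $G$-equivariant on generators, hence everywhere. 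The ideal $I_{r_2}(d_2) I_{r_3}(d_3)$ is $G$-invariant, so the ideal transform $R_m$ inherits a $G$-action and the map $R_m \to R$ remains $G$-equivariant. Taking the limit yields the desired $G$-equivariance of $w\colon \Rgen \to R$.

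For part 2, the same induction applies with ``$G$-equivariant'' replaced by ``homogeneous of degree zero'' throughout; the new ingredient is the existence of degree-zero $p_m$. The grading on $\mb{F}$ induces a grading on $R_a$ such that $R_a \to R$ is degree zero, since each $a_i$ is a uniquely determined combination of $d_i$'s. Given degree-zero $p_1,\ldots,p_{m-1}$, the cycle $q_m$ is degree zero because it is assembled from the $p_i$ ($i<m$), $d_3$, and the Lie bracket on $\mbf{L}$. The general construction of $\Rgen$ guarantees existence of \emph{some} lift $p_m \in \Hom(\mb{L}_m^* \otimes R, \mc{K} \otimes R)$; decompose it into homogeneous components $p_m = \sum_i p_m^{(i)}$. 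Since the Koszul differential is homogeneous of degree zero, so is composition with it, which forces $d_{\mathrm{Kos}}(p_m^{(i)})$ to be the degree-$i$ component of $q_m$. Because $q_m$ itself is concentrated in degree $0$, the component $p_m^{(0)}$ already lifts $q_m$, giving the required degree-zero lift. The rest of the argument proceeds as in part 1.

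The principal technical point in both parts is verifying that passage to the ideal transform preserves the relevant structure, and this reduces to the fact that $I_{r_2}(d_2) I_{r_3}(d_3)$ is itself $G$-invariant (respectively homogeneous), which is immediate from the corresponding property of the $d_i$. Once this is noted, the proof is essentially formal bookkeeping through the construction of $\Rgen$ recalled in \S\ref{sec:Rgen:construction}, together with the degree-decomposition trick in part 2.
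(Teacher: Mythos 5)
Your proof is correct and follows essentially the same strategy as the paper's: the map $w$ is determined by the choices of $p_m$ (Lemma~\ref{lem:GFR-p-determines-w}), the construction of $q_m$ is equivariant/degree-preserving, and in the graded case one takes the degree-zero homogeneous component of an arbitrary lift of $q_m$. You have filled in the bookkeeping about $R_a$, $R_m'$, and the ideal transform more explicitly than the paper (which dispenses with this as ``evident from the construction''), but the key ideas — inducting on $m$ and discarding non-degree-zero components of a lift — are identical.
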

The construction of the cycle $q_1$ in terms of the differentials $d_i$ and the construction of $q_m$ in terms of $p_1,\ldots,p_{m-1}$ are both $\prod \GL(F_i)$-equivariant. So in the setting of Proposition~\ref{prop:equivariant-p}, (1) the cycle $q_m$ is $G$-equivariant provided $p_1,\ldots,p_{m-1}$ are, and similarly (2) $q_m$ is homogeneous of degree zero provided $p_1,\ldots,p_{m-1}$ are.
\begin{proof}
	This is evident from the construction of $\Rgen = \lim R_m$ and Lemma~\ref{lem:GFR-p-determines-w}. In the graded setting, it is always possible to recursively take $p_m$ which is homogeneous of degree zero: simply take any lift of $q_m$ (which is homogeneous of degree zero by induction) and discard the components which are not homogeneous of degree zero.
\end{proof}

\subsection{Computing particular higher structure maps}
We now demonstrate how, with the presence of some symmetry, Proposition~\ref{prop:equivariant-p} can assist in determining a particular choice of structure maps $w^{(i)}$ for $\mb{F}$.

\begin{example}\label{ex:non-perfect}
	Let $R = \mb{C}[t_1,t_2,t_3,t_4]$ with the standard $\mb{Z}$-grading and let $I = (t_1 t_2, t_2 t_3, t_3 t_4, t_4 t_1) \subset R$. The minimal graded free resolution of $R/I$ is
	\[
	\mb{F} \colon 0\to R(-4) \to R^4(-3) \to R^4(-2) \to R.
	\]
	As per Proposition~\ref{prop:equivariant-p}, there is a choice of $w\colon\Rgen \otimes R\to R$ specializing $\Fgen$ to $\mb{F}$ that is homogeneous of degree zero. We identify $M\cong R$ as in Remark~\ref{rem:gl-vs-sl}. The structure maps have the form
	\begin{align*}
		w^{(1)} \colon [F_1 \oplus \bigwedge^3 F_1]\otimes R &\to R\\
		w^{(2)} \colon [F_1^* \oplus F_1 \otimes F_3^*] \otimes R &\to F_2^* \otimes R\\
		w^{(3)} \colon [F_3 \oplus \bigwedge^2 F_1 \oplus F_3^* \otimes \bigwedge^4 F_1]\otimes R &\to F_2 \otimes R
	\end{align*}
	In this example $\mb{L}_1 = \bigwedge^2 F_1^* \otimes F_3$ is concentrated in degree zero, causing all generators of each representation $L(\omega)^\vee$ to be in the same degree. So with a homogeneous choice of $w$, all entries of $w^{(1)}$ have degree 2, whereas all entries of $w^{(2)}$ and $w^{(3)}$ are linear. 
\end{example}

\begin{example}\label{ex:d3-split}
	Let $\mb{F}$ be a resolution where $d_3$ is a split inclusion. After a change of coordinates, we assume it has the form
	\[
	0 \to F_3 \otimes R \to (F_3 \oplus Z)\otimes R \to F_1 \otimes R \to F_0 \otimes R
	\]
	where $Z = \mb{C}^{r_2}$ and $d_3$ maps $F_3 \otimes R$ identically to itself.
	
	There is an action of $G = \GL(F_3 \otimes R)$ on this resolution, and the differentials are equivariant with respect to it. The Koszul complex on $\bigwedge^{r_3}(d_3)$ is just a split exact complex, so we can certainly pick lifts $p_m$ which are $G$-equivariant, e.g. by using a $G$-equivariant splitting.
	
	Let $w \colon \Rgen \otimes R \to R$ be the $G$-equivariant map obtained in this manner. The maps $w^{(i)}$ have the form
	\begin{align*}
		[F_3 \oplus \bigwedge^{f_0+1} F_1 \otimes M^* \oplus \cdots] \otimes R &\to (F_3 \oplus Z) \otimes R\\
		[F_1^* \oplus F_3^* \otimes \bigwedge^{f_0}F_1 \otimes M^* \oplus \cdots]\otimes R &\to (F_3^* \oplus Z^*) \otimes R\\
		[F_1 \oplus F_3^* \otimes \bigwedge^{f_0+2} F_1 \otimes M^* \oplus \cdots] \otimes R &\to F_0 \otimes R
	\end{align*}
	Note that $M = \bigwedge^{f_3} F_3 \otimes \bigwedge^{f_2}(F_3^* \oplus Z^*) \otimes \bigwedge^{f_1} F_1$ is a trivial representation of $G$. Since $\mb{L}_1^* = \bigwedge^2 F_1 \otimes F_3^* \otimes M^*$, we accumulate an additional factor of $F_3^*$ every time we go up in the $z_1$-grading in each representation. Thus by $G$-equivariance considerations the only components $w^{(i)}_j$ that have any chance of being nonzero are:
	\begin{align*}
		w^{(3)}_1 \colon \bigwedge^{f_0+1}F_1 \otimes M^* \otimes R &\to Z \otimes R \subset (F_3 \oplus Z )\otimes R\\
		w^{(2)}_1 \colon \bigwedge^{f_0} F_1 \otimes F_3^* \otimes M^* \otimes R&\to F_3^* \otimes R \subset (F_3^* \oplus Z^*) \otimes R
	\end{align*}
	in addition to the maps $w^{(i)}_0$ which are obviously nonzero since they give the differentials of the resolution.
	
	We also note that for the map $w^{(a_2)}$
	\[
	[\bigwedge^{r_2} F_1^* \otimes \bigwedge^{f_3} F_3^* \oplus \cdots] \otimes R \to \bigwedge^{r_2} Z^* \otimes \bigwedge^{r_3} F_3^* \otimes R
	\]
	only the bottom component, namely $a_2$ itself, is nonzero by the same considerations.
\end{example}
Hence it would be beneficial to at least understand how to compute $w^{(3)}_1$ and $w^{(2)}_1$ explicitly. In \cite[Prop. 7.1]{Lee-Weyman19}, it is described\footnote{In the referenced paper, it was assumed that $a_1\colon M \to \bigwedge^{f_0} F_0^*$ is an isomorphism.} how to compute these maps via a comparison map from a Buchsbaum-Rim complex, which we now recall. We write $F_i$ to mean $F_i \otimes R$ in the following. Theorem~\ref{thm:BE1} gives a factorization
\[
\begin{tikzcd}
	\bigwedge^{r_1} F_1 \ar[rr, "\bigwedge^{r_1}d_1"]\ar[dr] &&\bigwedge^{r_1} F_0\\
	& M \ar[ur, "a_1", swap]
\end{tikzcd}
\]
in particular a map $\beta\colon M^* \otimes \bigwedge^{r_1}F_1 \to R$, which is essentially $a_2^*$ after appropriate identifications. It is straightforward to check that the composite
\[
M^* \otimes \bigwedge^{r_1+1}F_1 \to M^* \otimes \bigwedge^{r_1} F_1 \otimes F_1 \xto{\beta \otimes 1} F_1 \xto{d_1} F_0
\]
is zero, thus we can lift through $d_2$ to obtain a map
\[
w^{(3)}_1 \colon M^*\otimes \bigwedge^{r_1+1}F_1 \to F_2.
\]
The difference of the two maps
\begin{gather*}
	M^* \otimes \bigwedge^{r_1}F_1 \otimes F_2 \xto{\beta \otimes 1} F_2 \\
	M^* \otimes \bigwedge^{r_1}F_1 \otimes F_2 \xto{1 \otimes d_2} M^* \otimes \bigwedge^{r_1}F_1 \otimes F_1 \to M^* \otimes \bigwedge^{r_1+1} F_1 \xto{w^{(3)}_1} F_2
\end{gather*}
has image landing in $\ker d_2$, and thus it can be lifted through $d_3$ to obtain
\[
w^{(2)}_1 \colon M^* \otimes \bigwedge^{r_1}F_1 \otimes F_2 \to F_3.
\]
In the case that $r_0 = 1$, these maps can be viewed as giving a choice of multiplication on the resolution
\[
0 \to M^* \otimes F_3 \to M^* \otimes F_2 \to M^* \otimes F_1 \xto{\beta} R
\]
recovering what was illustrated in Example~\ref{ex:multiplication}.

A very important special case of Example~\ref{ex:d3-split} is when the entire complex $\mb{F}$ is split exact, e.g. we take $\mb{F}$ to be the split exact complex
\[
\mb{F}^\ssc \colon 0 \to F_3 \to F_3 \oplus Z \to F_0 \oplus Z \to F_0
\]
of $\mb{C}$-vector spaces. Here $M = \bigwedge^{f_0} F_0$, and a direct computation shows that there is a unique $G = \GL(F_0) \times \GL(F_3) \times \GL(Z)$-equivariant choice of $w$. For this choice of $w$, direct computation with the explicit definitions of $w^{(3)}_1$ and $w^{(2)}_1$ above shows that
\[
w^{(3)}_1 \colon \bigwedge^{f_0+1} F_1 \otimes \bigwedge^{f_0} F_0^* = Z \oplus F_0^* \otimes \bigwedge^2 Z \oplus \cdots  \to F_3 \oplus Z
\]
maps $Z$ identically to itself and is zero on all other factors by $G$-equivariance. Similarly
\[
w^{(2)}_1 \colon \bigwedge^{f_0} F_1 \otimes F_3^* \otimes \bigwedge^{f_0} F_0^* = (\mb{C} \oplus F_0^* \otimes Z \oplus \bigwedge^2 F_0^* \otimes \bigwedge^2 Z \oplus \cdots)\otimes F_3^* \to F_3^* \oplus Z^* = F_2^*
\]
maps $F_3^*$ identically to itself and is zero on all other factors.

If we view $\mf{sl}(F_0) \times \mf{sl}(F_2)$ as the subalgebra $\mf{g}^{(x_1)}$ of $\mf{g}$ as described at the beginning of \S\ref{sec:crit-reps}, we conclude:
\begin{thm}\label{thm:ssc}
	There exists a $\mb{C}$-algebra homomorphism $w_\mathrm{ssc}\colon \Rgen \to \mb{C}$ so that
	\begin{align*}
		w_\ssc^{(1)} \colon L(\omega_{x_{r_1}}) &\twoheadrightarrow F_0\\
		w_\ssc^{(2)} \colon L(\omega_{y_{r_2-2}}) &\twoheadrightarrow F_2^*\\
		w_\ssc^{(3)} \colon L(\omega_{z_{r_3}}) &\twoheadrightarrow F_2
	\end{align*}
	are given by projection onto the bottom $x_1$-graded component. Note that this determines $w_\ssc$ completely by Proposition~\ref{prop:HST-determined-by-a3}. Furthermore, $w^{(a_2)}$ is also projection onto its bottom $x_1$-graded component, which is its lowest weight space.
	
	The map $w_\ssc$ specializes $\Fgen$ to the \emph{standard split complex} of $\mb{C}$-vector spaces
	\[
	\mb{F}^\ssc \colon 0 \to F_3 \to F_2 \oplus Z \to F_0 \oplus Z \to F_0
	\]
	where $Z = \mb{C}^{r_2}$.
\end{thm}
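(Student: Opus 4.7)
The plan is to assemble $w_\ssc$ using the equivariance framework of Proposition~\ref{prop:equivariant-p} and then match its structure maps against the Buchsbaum-Rim computations from the preceding discussion. The group $G = \GL(F_0) \times \GL(F_3) \times \GL(Z)$ acts on $\mb{F}^\ssc$ making the differentials $G$-equivariant, so Proposition~\ref{prop:equivariant-p}(1) produces a $G$-equivariant specialization $w_\ssc \colon \Rgen \to \mb{C}$ sending $\Fgen$ to $\mb{F}^\ssc$. The cited Proposition~\ref{prop:HST-determined-by-a3} will then guarantee that $w_\ssc$ is pinned down once its structure maps are identified.

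I would next determine the structure maps of this $w_\ssc$ by applying the analysis of Example~\ref{ex:d3-split} verbatim to the fully split situation. Each increase in $z_1$-degree of the source introduces an additional factor of $F_3^*$, which has nothing to pair with in the targets $F_0$, $F_2^*$, $F_2$ beyond the single $F_3$ factor already present in $F_2 = F_3 \oplus Z$ and its dual. Hence $\GL(F_3)$-equivariance forces $w^{(i)}_j = 0$ for all $j \geq 2$, together with the vanishing of $w^{(3)}_1$ on all summands except $Z$ and of $w^{(2)}_1$ on all summands except $F_3^*$. The surviving components are the differentials $w^{(1)}_0 = d_1$, $w^{(2)}_0 = d_2^*$, $w^{(3)}_0 = d_3$, plus the pieces $w^{(3)}_1$ restricted to $Z$ and $w^{(2)}_1$ restricted to $F_3^*$. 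Applying the explicit Buchsbaum-Rim lift formulas for $w^{(3)}_1$ and $w^{(2)}_1$ recalled just before the theorem to the tautological differentials of $\mb{F}^\ssc$ shows that these last two components are the identity maps.

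Finally, I would reinterpret this concrete description as projection onto the bottom $x_1$-graded component. In the $x_1$-grading of \S\ref{sec:grading}, the bottom pieces of $L(\omega_{x_{r_1}})^\vee$, $L(\omega_{y_{r_2-2}})^\vee$, and $L(\omega_{z_{r_3}})^\vee$ are irreducible $\mf{g}^{(x_1)} = \mf{sl}(F_0) \times \mf{sl}(F_2)$-modules canonically identified with $F_0$, $F_2^*$, and $F_2$ respectively, and the corresponding projections are $\mf{g}^{(x_1)}$-equivariant, hence $G$-equivariant. Both the concrete $w^{(i)}$ identified above and the claimed projections are $G$-equivariant maps agreeing on the distinguished summands while vanishing on the rest, so they coincide. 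The claim for $w^{(a_2)}$ then follows: the bottom $x_1$-graded component of $L(\omega_{x_1})^\vee$ is the one-dimensional lowest weight line, and $G$-equivariance together with the relation between $a_2$ and the $d_i$ forces $w^{(a_2)}$ to be nonzero precisely there. The main technical obstacle is checking that the surviving $z_1$-graded pieces identified above collectively span the entire bottom $x_1$-graded component of each $L(\omega)^\vee$; this reduces to a concrete weight-lattice computation in the three fundamental representations of $\mf{g}^{(x_1)}$.
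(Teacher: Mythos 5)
Your proposal follows essentially the same route as the paper: the paper's proof is the discussion and computation immediately preceding the theorem statement, which (as you do) specializes Example~\ref{ex:d3-split} to the fully split complex, invokes $G=\GL(F_0)\times\GL(F_3)\times\GL(Z)$-equivariance to kill all but a handful of summands, uses the Buchsbaum--Rim lift formulas to identify the surviving pieces of $w^{(3)}_1$ and $w^{(2)}_1$ as identities, and then reinterprets the result as projection onto the bottom $x_1$-graded component via the identification $\mf{g}^{(x_1)} = \mf{sl}(F_0)\times\mf{sl}(F_2)$. Your closing remark correctly flags the one step the paper also leaves implicit (that the surviving summands are precisely the bottom $x_1$-graded component), but since you identify it as a routine weight computation rather than a genuine obstruction, this does not distinguish your argument from the paper's.
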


The comment regarding $w^{(a_2)}$ in this theorem implies the following.
\begin{cor}\label{cor:GP-homogeneous-ring-wa2}
	Let $w\colon \Rgen \to R$ specialize $\Fgen$ to a resolution $\mb{F}$ of Dynkin format. Then there is a unique ring homomorphism
	\[
	\mf{A} = \bigoplus_{n \geq 0} L(n\omega_{x_1})^\vee \to R
	\]
	equal to $w^{(a_2)}$ in degree 1. Here the source is the homogeneous coordinate ring of $G/P = G/P_{x_1}$.
\end{cor}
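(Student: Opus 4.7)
\emph{Uniqueness} is immediate: the graded $\mb{C}$-algebra $\mf{A}$ is generated in degree 1 by $L(\omega_{x_1})^\vee$, so any $\mb{C}$-algebra homomorphism out of $\mf{A}$ is determined by its restriction to degree 1.

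For \emph{existence}, the plan is to fix a trivialization $\bigwedge^{f_2} F_2 \cong \mb{C}$, under which $W(a_2)$ becomes $L(\omega_{x_1})^\vee \subset \Rgen$ and $w^{(a_2)}$ becomes a $\mb{C}$-linear map $L(\omega_{x_1})^\vee \to R$. By the universal property of the symmetric algebra, this extends uniquely to a $\mb{C}$-algebra homomorphism $\varphi\colon \Sym L(\omega_{x_1})^\vee \to R$, and the goal is to show that $\varphi$ vanishes on $I_\Plucker$. Since $w$ is a ring homomorphism, $\varphi$ factors as $\varphi = w \circ \iota$, where $\iota\colon \Sym L(\omega_{x_1})^\vee \to \Rgen$ is the $\mb{C}$-algebra map extending the inclusion $L(\omega_{x_1})^\vee \hookrightarrow \Rgen$. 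It therefore suffices to show $\iota(I_\Plucker) = 0$. The ideal $I_\Plucker$ is the $\mf{g}$-submodule of $\Sym L(\omega_{x_1})^\vee$ generated in degree 2 by the non-Cartan irreducible summands of $S_2 L(\omega_{x_1})^\vee$, and $\iota$ is $\mf{g}$-equivariant, so the problem reduces to showing that the multiplication $\iota_2\colon S_2 L(\omega_{x_1})^\vee \to \Rgen$ factors through the Cartan summand $L(2\omega_{x_1})^\vee$.

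The hard step will be this last claim, which amounts to asserting that the subring of $\Rgen$ generated by $W(a_2)$ has its $n$-th graded piece equal to $\bigwedge^{nf_2}F_2 \otimes L(n\omega_{x_1})^\vee$, matching the graded pieces of $\mf{A}$ up to the expected determinantal twist. One direction, that the Cartan piece survives, follows from Theorem~\ref{thm:ssc}: since $w_\ssc$ restricts to the lowest weight functional $p_e$ on $L(\omega_{x_1})^\vee$, we have $w_\ssc(p_e^n) \neq 0$, so the image of $L(n\omega_{x_1})^\vee$ in $\Rgen$ cannot vanish. The other direction, ruling out the non-Cartan irreducibles, is the substantive content; I would derive it from the explicit $\mf{sl}(F_0)\times\mf{sl}(F_2)\times\mf{g}$-equivariant decomposition of $\Rgen$ established in \cite{Weyman18}, supplemented by weight considerations in the spirit of Remark~\ref{rem:a3-in-d3}. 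Once this is established, $\iota$ kills $I_\Plucker$, so $\varphi = w\circ\iota$ factors through $\mf{A}$, giving the desired ring homomorphism $\mf{A} \to R$.
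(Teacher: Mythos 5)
Your uniqueness argument matches the paper's, and your reduction is sound: since $w$ is a ring map, the extension $\varphi = w\circ\iota$ of $w^{(a_2)}$ to $\Sym L(\omega_{x_1})^\vee$ factors through $\mf{A}$ if and only if $\iota(I_\Plucker) = 0$ in $\Rgen$, and by $\mf{g}$-equivariance of $\iota$ this reduces to checking that the multiplication $S_2 L(\omega_{x_1})^\vee \to \Rgen$ kills the non-Cartan summands. But that last step is exactly where your proposal stops. You say you ``would derive it from the explicit $\mf{sl}(F_0)\times\mf{sl}(F_2)\times\mf{g}$-equivariant decomposition of $\Rgen$\ldots supplemented by weight considerations,'' without carrying out the derivation. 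This is a genuine gap, and not a small one: knowing the isotypic decomposition of $\Rgen$ as a $\GL(F_0)\times\GL(F_2)\times\mf{g}$-module does not by itself tell you how elements of $W(a_2)$ multiply --- the irreducible constituents of $S_2 L(\omega_{x_1})^\vee$ other than $L(2\omega_{x_1})^\vee$ could in principle appear elsewhere in $\Rgen$ with compatible $\GL(F_2)$-weight, and ruling this out requires nontrivial control of the multiplication in $\Rgen$, which is constructed as a limit of ideal transforms and is not easy to probe directly. (Note also that your claim is equivalent to the corollary itself applied with $R=\Rgen$, $w=\Id$, so you have restated the problem rather than solved it.)

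The paper sidesteps all of this. It first observes, via Theorem~\ref{thm:ssc}, that for $w_\ssc\colon\Rgen\to\mb{C}$ the degree-1 map $w_\ssc^{(a_2)}$ is projection onto the lowest weight line, which tautologically extends to a ring map $\mf{A}\to\mb{C}$ (it is the $\mb{C}$-point $v\in G/P$). It then notes that every $w$ specializing $\Fgen$ to a split complex over any $R$ is obtained from $w_\ssc\otimes R$ by acting with $\prod\GL(F_i\otimes R)$ and $\exp(\mbf{L}\cotimes R)$ (Theorem~\ref{thm:parametrize}); since these act on $\mf{A}$, the Pl\"ucker relations remain satisfied. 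Finally, for general acyclic $\mb{F}$ of Dynkin format, one localizes at a nonzerodivisor in $I_{f_3}(d_3)$ (available since this ideal has grade $\geq 3$) to split the complex, and relations that vanish after localizing at a nonzerodivisor already vanish over $R$. This route requires no information about the ring structure of $\Rgen$ beyond what Theorems~\ref{thm:ssc} and \ref{thm:parametrize} already provide, which is why it succeeds where your plan stalls. If you want to salvage your approach, you should either quote a concrete multiplicity-one statement from \cite{Weyman18} identifying the $\bigwedge^{2f_2}F_2$-isotypic part of $\Rgen$, or adopt the paper's localization trick to prove your ``hard step.''
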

Note that if $\mb{F}$ resolves a cyclic module (i.e. $r_1 = 1$), then $w^{(a_2)}$ is simply $w^{(1)}$ divided by the scalar $a_1$.

\begin{proof}
	The homogeneous coordinate ring of $G/P$ is generated in degree 1 so the uniqueness is clear; we need to check that the map is well-defined. As noted in Theorem~\ref{thm:ssc}, this is certainly true for $w = w_\ssc$, so the result for split $\mb{F}$ follows from Theorem~\ref{thm:parametrize} since $\GL(F_i)$ and $\exp \mbf{L}$ both act on the homogeneous coordinate ring of $G/P$.
	
	The ring $\bigoplus_{n \geq 0} L(n\omega_{x_1})^\vee$ is a quotient of $\Sym L(n\omega_{x_1})^\vee$ by Pl\"ucker relations. An arbitrary $\mb{F}$ is split after localization, and relations which hold over the localization must also hold over $R$.
\end{proof}

For this homomorphism $\mf{A} \to R$ to describe a well-defined map $\Spec R \to G/P$, it is necessary for $w^{(a_2)}$ to be surjective. This is \emph{not} automatic: in Example~\ref{ex:non-perfect}, all entries of the matrix $w^{(a_2)}$ (identified with $w^{(1)}$ by $M \cong R$) are in $\mf{m}$, thus $w^{(a_2)}$ is not surjective.

Stated geometrically, it is clear from the above that we at least obtain a map $\Spec R \setminus V(H_0(\mb{F})) \to G/P$, since $\mb{F}$ is split on the complement of $V(H_0(\mb{F}))$. We would like to argue that, under appropriate hypotheses, this map extends to the entirety of $R$. As we will see in the subsequent section, the acyclicity of $\mb{F}^*$ will guarantee this.

\section{Surjectivity of the maps $w^{(i)}$}\label{sec:surjectivity-pf}

In this section we will prove the main technical result of this paper, which underlies the classification in subsequent sections.

\begin{thm}\label{thm:surjectivity}
	Suppose we have a free resolution
	\[
	\mb{F} \colon 0 \to F_3 \to F_2 \to F_1 \to F_0
	\]
	of Dynkin format $\underline{f} = (f_0,f_1,f_2,f_3) = (r_1,r_1+r_2,r_2+r_3,r_3)$ over a $\mb{C}$-algebra $R$, whose dual $0 \to F_0^* \to F_1^* \to F_2^* \to F_3^*$ is also acyclic. Let $\Rgen(\underline{f})$ be the generic ring associated to the format $\underline{f}$, and $w\colon \Rgen(\underline{f}) \to R$ a map specializing $\mb{F}^\mathrm{gen}$ to $\mb{F}$. Then the structure maps
	\begin{gather*}
		w^{(3)}\colon R \otimes L(\omega_{z_{r_3}})^\vee = R \otimes [F_3 \oplus M_0^* \otimes\bigwedge^{f_0+1} F_1 \oplus \cdots] \to R \otimes F_2\\
		w^{(2)} \colon R \otimes L(\omega_{y_{r_2-2}})^\vee = R \otimes [F_1^* \oplus M_0^* \otimes F_3^* \otimes \bigwedge^{f_0} F_1 \oplus \cdots] \to R \otimes F_2^*\\
		w^{(1)} \colon R \otimes L(\omega_{x_{r_1}})^\vee = R \otimes [F_1 \oplus M_0^* \otimes F_3^* \otimes \bigwedge^{f_0+2} F_1 \oplus \cdots] \to R \otimes F_0\\
		w^{(a_3)} \colon R \otimes L(\omega_{z_{1}})^\vee = R \otimes [\bigwedge^{f_3} F_3 \oplus \cdots] \to R \otimes \bigwedge^{f_3} F_2\\
		w^{(a_2)} \colon R \otimes L(\omega_{x_{1}})^\vee = R \otimes [\bigwedge^{r_2} F_1^* \otimes \bigwedge^{f_3} F_3^* \oplus \cdots ]\to R \otimes \bigwedge^{f_2} F_2^* 
	\end{gather*}
	are surjective.
\end{thm}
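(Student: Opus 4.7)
The plan is to reduce to a local statement and invoke Theorem~\ref{thm:ssc} in the split case, using the acyclicity of $\mathbb{F}^*$ to ensure that the non-split locus has high grade. Since each target $F_\bullet \otimes R$ is finitely generated free, the cokernel of every $w^{(i)}$ is finitely generated, so surjectivity may be verified after localization at each maximal ideal of $R$.

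First I would combine the Buchsbaum-Eisenbud acyclicity criterion applied to $\mathbb{F}$ and to $\mathbb{F}^*$. The acyclicity of $\mathbb{F}$ gives $\grade I_{r_i}(d_i) \geq i$ for $i=1,2,3$, while that of $\mathbb{F}^*$ gives symmetric bounds; together they yield $\grade I_{r_1}(d_1), \grade I_{r_3}(d_3) \geq 3$ and $\grade I_{r_2}(d_2) \geq 2$. A short rank argument then shows that whenever $d_1$ and $d_3$ are both split at a prime $\mathfrak{p}$, the middle differential $d_2$ is split as well: $d_3$ split gives $F_2 = F_3 \oplus Z_2$ with $\ker d_2 = F_3$ by acyclicity, and then the induced map $Z_2 \to \ker d_1$ is a surjection of rank-$r_2$ free $R_\mathfrak{p}$-modules, hence an isomorphism, which together with a splitting of $d_1$ realizes the entire complex as split exact. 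Consequently $\mathbb{F}_\mathfrak{p}$ is split exact whenever neither $I_{r_1}(d_1)$ nor $I_{r_3}(d_3)$ lies in $\mathfrak{p}$, so the non-split locus has grade at least three.

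At primes $\mathfrak{p}$ in the split locus I would invoke Theorem~\ref{thm:ssc} together with Theorem~\ref{thm:parametrize}. The canonical specialization $w_\ssc$ takes $\Fgen$ to the standard split model $\mathbb{F}^\ssc$, and the structure maps $w_\ssc^{(i)}$ are the projections onto the bottom graded components of the appropriate $L(\omega)^\vee$, hence manifestly surjective. Any other specialization $w$ to $\mathbb{F}_\mathfrak{p}$ differs from $w_\ssc$ by composition with $\exp X$ for some $X \in \mathbf{L}\otimes R_\mathfrak{p}$. Since each critical representation $W(d_i)$ is a $\mathfrak{g}$-subrepresentation of $\Rgen$ and $\mathbf{L} \subset \mathfrak{g}$, the operator $\exp X$ restricts to a vector-space automorphism of $W(d_i) \otimes R_\mathfrak{p}$. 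Consequently the image of $w^{(i)}_\mathfrak{p}$ agrees with that of $(w_\ssc)^{(i)}_\mathfrak{p}$, so local surjectivity holds everywhere in the split locus.

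The principal obstacle is handling primes $\mathfrak{p}$ in the non-split locus, which has grade at least three; at such primes $\mathbb{F}_\mathfrak{p}$ is not split, so Theorem~\ref{thm:ssc} does not apply directly. Here I would analyze the lowest nontrivial higher structure maps $w^{(3)}_1$ and $w^{(2)}_1$ via the Buchsbaum-Rim comparison construction reviewed around Example~\ref{ex:d3-split}, and use the Dynkin-specific structure of the representations $L(\omega_{z_{r_3}})^\vee$, $L(\omega_{y_{r_2-2}})^\vee$, $L(\omega_{x_{r_1}})^\vee$ to show that the combined image of $w^{(i)}_0, w^{(i)}_1, \ldots$ still spans $F_\bullet \otimes R_\mathfrak{p}$. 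Finally, surjectivity of $w^{(a_3)}$ is deduced from that of $w^{(3)}$ via Remark~\ref{rem:a3-in-d3'}, since its entries are $r_3 \times r_3$ minors of $w^{(3)}$, and surjectivity of $w^{(a_2)}$ follows similarly from properties of $w^{(1)}$ (via the multiplier $a_1$ in the cyclic case, and by a parallel minor computation in general).
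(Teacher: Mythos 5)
Your reduction to a pointwise statement is correct in principle, and the split-locus analysis is fine, but it leaves the theorem's entire content unproved. In the motivating case where $R$ is local and $\mb{F}$ resolves $R/I$ for a proper grade-3 ideal, the closed point itself lies in the non-split locus, so the theorem \emph{is} the non-split case. Your treatment of that case is only a sketch (``analyze $w^{(3)}_1$, $w^{(2)}_1$ via Buchsbaum--Rim and use the Dynkin-specific structure to show the combined image spans''), and there is no indication of how this would actually go. Notably, Example~\ref{ex:nonlicci-perfect} in the paper shows that for the non-Dynkin format $(1,6,8,3)$ the conclusion genuinely fails, so any argument in the non-split case must deploy the Dynkin hypothesis in an essential way; a generic ``span the target with higher components'' strategy is exactly the kind of thing that ought to break down at the affine boundary, and your proposal gives no mechanism for why it wouldn't.

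The paper's proof sidesteps the non-split locus entirely by a more global argument. It brings in a specialization $w'$ of the \emph{dual} generic ring to $\mb{F}^*$ and constructs, for each $i$, a square matrix $A_i$ acting on $L(\omega)^\vee \otimes R$ whose rows restrict to $w^{(i)}$ and whose columns restrict to the appropriate $w'^{(j)}$. Over a localization $R_h$ where $\mb{F}$ is split, $A_i$ is built as $\exp(X^+)\,\rho_2(g_2)\,\rho_1(g_1^{-1})\,\exp(Z^-)$, which is manifestly invertible; after establishing that this construction is independent of the chosen splitting, the $A_i$ (and their inverses) built over $R_{h_1}$ and $R_{h_2}$ for a regular sequence $h_1,h_2 \in I_{f_0}(d_1)$ are shown to agree on $R_{h_1 h_2}$, hence live in $R_{h_1}\cap R_{h_2}=R$. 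Surjectivity of $w^{(i)}$ then follows immediately because it is a block of rows of the invertible $A_i$. The Dynkin hypothesis is used precisely to make $L(\omega)^\vee$ finite-dimensional so that $A_i$ can be interpreted as an endomorphism and ``invertible'' makes sense; this is exactly the structural input your sketch is missing. So your proposal is not a variant proof but has a genuine gap at its central step.
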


Note that with the assumptions of the theorem, the Buchsbaum-Eisenbud multiplier $a_1$ yields an isomorphism
\[
a_1\colon M = \bigwedge^{f_3} F_3 \otimes \bigwedge^{f_2} F_2^* \otimes \bigwedge^{f_1} F_1 \xto{\cong} \bigwedge^{f_0} F_0 = M_0.
\]
Using this identification, we have replaced the tensor powers of $M^*$ appearing in the graded decomposition of the critical representations by powers of $M_0^*$ as in Remark~\ref{rem:gl-vs-sl}. If $r_1 = 1$, then this also identifies the map $w^{(1)}$ with $w^{(a_2)}$. In the later sections where we classify perfect ideals, we will primarily make use of the surjectivity of $w^{(1)}$. As it is a map to $R$, its surjectivity is equivalent to being nonzero modulo the maximal ideal $\mf{m} \subset R$.

The prototypical example of Theorem~\ref{thm:surjectivity} mentioned in the introduction is $w^{(2)}$ for $\underline{f}=(1,n,n,1)$. This is a $n \times 2n$ matrix consisting of the differential $d_2$ and an isomorphism $F_1 \cong F_2^*$ induced by a choice of multiplication on $\mb{F}$. The surjectivity of the matrix is evident from the presence of an invertible submatrix.

However for other Dynkin formats, it is hard to generalize this method to find an invertible submatrix. Instead, we will prove surjectivity of the maps $w^{(*)}$ by exhibiting them inside of larger invertible matrices. Conceptually, the idea is that by considering higher structure maps for both $\mb{F}$ and its dual simultaneously, we are able to construct a map $\Spec R \setminus V(H_0(\mb{F})) \to G$ lifting the desired map to $G/P$ from Corollary~\ref{cor:GP-homogeneous-ring-wa2}. Unlike the projective variety $G/P$, the group $G$ itself is affine, thus this map extends to the entirety of $\Spec R$ because $\grade H_0(\mb{F}) = 3 \geq 2$. In the actual proof, we will circumvent dealing with $G$ by working directly with automorphisms of each representation.

\subsection{The setup of the proof}\label{subsec:surjectivity-proof-setup}
For the proof, we fix a map $w'\colon \Rgen(\underline{f}^*) \to R$ specializing $\Fgen(\underline{f}^*)$ to $\mb{F}^*$. As usual, we will identify
\begin{equation}\label{eq:surj-ident-a1}
	\bigwedge^{f_3} F_3 \otimes \bigwedge^{f_2} F_2^* \otimes \bigwedge^{f_1} F_1 \cong \bigwedge^{f_0} F_0
\end{equation}
using $a_1$ following Remark~\ref{rem:gl-vs-sl}. We also make the analogous identification
\begin{equation}\label{eq:surj-ident-a1*}
	\bigwedge^{f_0} F_0^* \otimes \bigwedge^{f_1} F_1 \otimes \bigwedge^{f_2} F_2^* \cong \bigwedge^{f_3} F_3^*
\end{equation}
for $\Rgen(\underline{f}^*)$.

We discuss $w^{(1)}$ as an example, but the situation for the other structure maps is completely analogous. Our strategy is to produce a map $A_1$ making the following diagram commute:
\begin{equation}\label{eq:diamond}
	\begin{tikzcd}
		&L(\omega_{x_{r_1}}) \otimes L(\omega_{x_{r_1}})^\vee \ar[d,dashed,"A_1"]\\
		\Rgen(\underline{f}^*) \supset L(\omega_{x_{r_1}}) \ar[ur,"\Id\otimes \iota"] \otimes F_1 \ar[r,"w'^{(3)}"] & R & \ar[l,"w^{(1)}",swap] F_0^* \otimes L(\omega_{x_{r_1}})^\vee \ar[ul,"\eta \otimes \Id",swap] \subset \Rgen(\underline{f})\\
		& F_0^* \otimes F_1 \ar[u,swap,"d_1"] \ar[ur,"\Id \otimes \iota",swap]\ar[ul,"\eta\otimes \Id"]
	\end{tikzcd}
\end{equation}
where $\iota$ is inclusion of the bottom $z_1$-graded component and $\eta$ is inclusion of the top $x_1$-graded component. Here we are abusing notation somewhat: $w^{(1)}$ is by definition a map $L(\omega_{x_{r_1}})^\vee \otimes R \to F_0 \otimes R$ but we view it in the diagram as a restriction of $w\colon \Rgen \to R$.

Obviously there are many choices of $A_1$ as stated, but the point is to construct it as an isomorphism when viewed as a map $L(\omega_{x_{r_1}})^\vee \otimes R \to L(\omega_{x_{r_1}})^\vee \otimes R$. (Morally, it should be viewed as the action of our desired $R$-point $g \in G$ on $L(\omega_{x_{r_1}})^\vee\otimes R$.) Note that in order to view $A_1$ in this manner, we are using the fact that $T$ is a Dynkin diagram so $L(\omega_{x_{r_1}})^\vee = L(\omega_{x_{r_1}})^*$ since it is finite-dimensional.

\subsection{The split exact case}\label{subsec:splitcase}
Note that if $R = \mb{C}$ and we take $A_1 = \Id$, corresponding in \eqref{eq:diamond} to the evident pairing $L(\omega_{x_{r_1}}) \otimes L(\omega_{x_{r_1}})^\vee \to \mb{C}$, then its restriction to $L(\omega_{x_{r_1}}) \otimes F_1$ recovers $w_\ssc'^{(3)}$ and its restriction to $F_0^* \otimes L(\omega_{x_{r_1}})^\vee$ recovers $w_\ssc^{(1)}$. Restricting down further to $F_0^* \otimes F_1$ recovers the first differential of $\mb{F}^\ssc$, viewed either as $(w_\ssc'^{(3)})_0^*$ if we restrict down via the left half of the diagram, or as $(w_\ssc^{(1)})_0$ if we restrict down the right half.

Hence $A_1 = \Id$ is a natural solution to the lifting problem \eqref{eq:diamond} if $\mb{F} =\mb{F}^\ssc$, $w = w_\ssc$, and $w' = w_\ssc'$. Now suppose $\mb{F}$ is a split exact complex over some ring $R$, with differentials $d_i$. Then one can choose an isomorphism $\mb{F}_\mathrm{ssc} \otimes R \cong \mb{F}$, which amounts to picking $g_1 \in \GL(F_1 \otimes R)$ and $g_2 \in \GL(F_2 \otimes R)$:
\[\begin{tikzcd}
	0 \ar[r] & F_3 \otimes R \ar[r]\ar[d,equals] & F_2 \otimes R \ar[r]\ar[d,"g_2"] & F_1 \otimes R \ar[r]\ar[d,"g_1"] & F_0 \otimes R \ar[d,equals]\\
	0 \ar[r] & F_3 \otimes R \ar[r,"d_3"] & F_2 \otimes R \ar[r,"d_2"] & F_1 \otimes R \ar[r,"d_1"] & F_0 \otimes R
\end{tikzcd}\]
Explicitly $g_1, g_2$ are such that
\[
g_1^{-1} = \begin{blockarray}{cc}
	& \margin{$F_1$} \\
	\begin{block}{c[c]}
		\margin{$F_0$} & d_1\\
		\margin{$C$} & \gamma\\
	\end{block}
\end{blockarray}
\quad\quad
g_2 = \begin{blockarray}{ccc}
	& \margin{$F_3$} & \margin{$C$} \\
	\begin{block}{c[cc]}
		\margin{$F_2$} & d_3 & \beta\\
	\end{block}
\end{blockarray}
\]
with the property that the composite $F_1 \xto{\gamma} C \xto{\beta} F_2$ splits the differential $d_2$.

As discussed above, setting $A_1 = \Id$ in \eqref{eq:diamond} works for $\mb{F}^\ssc \otimes R$. Precompose $w_\ssc \otimes R$ by the action of $g_2 g_1^{-1}$ on $\Rgen(\underline{f}) \otimes R$ to obtain a new map $w_0$, and similarly precompose $w_\ssc' \otimes R$ by the action of $g_2 g_1^{-1}$ on $\Rgen(\underline{f}^*) \otimes R$ to obtain a new map $w_0'$. By Theorem~\ref{thm:parametrize}, $w = w_0 \exp Z^-$ for some $Z^- \in \mbf{L} \cotimes R$ where $\mbf{L} = \hat{\mf{n}}_{z_1}^-$, and similarly $w' = w_0' \exp X^+$ for some $X^+ \in \mbf{L}' \cotimes R$ where $\mbf{L}' = \hat{\mf{n}}_{x_1}^+$. We remark that the completions in defining $\mbf{L}$ and $\mbf{L}'$ are not necessary here because $\mf{g}$ is finite-dimensional.

With the identifications \eqref{eq:surj-ident-a1} and \eqref{eq:surj-ident-a1*}, $\GL(F_2)$ acts only on the left tensor factor in \eqref{eq:diamond} whereas $\GL(F_1)$ only acts on the right tensor factor. So we act on the whole diagram \eqref{eq:diamond} by $g_1 g_2$, thereby replacing $A_1 = \Id$ with
\[
A_1 = \rho_2(g_2) \rho_1(g_1^{-1})
\]
restricting down to our maps $w_0$ and $w_0'$. Here
\begin{align*}
	\rho_1 \colon \GL(F_1) &\to \Aut L(\omega_{x_{r_1}})^\vee\\
	\rho_2 \colon \GL(F_2) &\to \Aut L(\omega_{x_{r_1}})^\vee
\end{align*}
denote the actions of $\GL(F_i)$ on the two tensor factors, viewed as the source and target respectively of $A_1$.

Finally, we act by $\exp X^+$ on the left factor and $\exp Z^-$ on the right factor in the diagram to obtain
\begin{equation}\label{eq:def-A1}
	A_1 = \exp(X^+) \rho_2(g_2) \rho_1(g_1^{-1}) \exp(Z^-)
\end{equation}
restricting down to $w$ and $w'$ by construction. This should be viewed as the action of our desired $R$-point of $G$, as discussed in the motivation at the beginning of our construction. In particular, $A_1$ is obviously invertible by construction, being the composite of invertible maps. Repeating this procedure for the other higher structure maps, we obtain
\begin{itemize}
	\item $A_1$ lifting the pair $w^{(1)}$ and $w'^{(3)}$,
	\item $A_2$ lifting the pair $w^{(2)}$ and $w'^{(2)}$,
	\item $A_3$ lifting the pair $w^{(3)}$ and $w'^{(1)}$,
\end{itemize}
where $A_2$ and $A_3$ are defined using the same formula \eqref{eq:def-A1} but with the actions on $L(\omega_{y_{r_2-2}})^\vee$ and $L(\omega_{z_{r_3}})^\vee$ respectively. The reason for dealing with all $w^{(i)}$ simultaneously is explained by the following lemma.
\begin{lem}\label{lem:matrix-A-comparison}
	Let $\wt{X}^+ \in \mbf{L}' \cotimes R$ and $\wt{Z}^- \in \mbf{L} \cotimes R$. Define $\wt{A}_i$ following \eqref{eq:def-A1}, replacing $X^+$ by $\wt{X}^+$ and $Z^-$ by $\wt{Z}^-$.
	\begin{itemize}
		\item If $\wt{A}_1$ extends $w'^{(3)}$ in the sense of \eqref{eq:diamond}, then $X^+ = \wt{X}^+$.
		\item If $\wt{A}_3$ extends $w^{(3)}$, then $Z^- = \wt{Z}^-$.
	\end{itemize}
\end{lem}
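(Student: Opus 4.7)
We prove the first bullet; the second follows by the symmetric argument (interchanging $X^+ \leftrightarrow Z^-$, $x_1 \leftrightarrow z_1$, and $F_1 \leftrightarrow F_3$). The strategy is to extract $X^+$ degree-by-degree in the $x_1$-grading on $L := L(\omega_{x_{r_1}})^\vee$. Since elements of $\hat{\mf{n}}_{z_1}^-$ strictly lower $z_1$-grading and $F_1 \subset L$ is the bottom $z_1$-graded component, both $\exp(Z^-)$ and $\exp(\wt Z^-)$ restrict to the identity on $F_1 \otimes R$. The hypothesis $\wt A_1|_{F_1 \otimes R} = A_1|_{F_1 \otimes R}$ then simplifies to
\[
\exp(X^+)\,h|_{F_1} = \exp(\wt X^+)\,h|_{F_1}, \qquad h := \rho_2(g_2)\,\rho_1(g_1^{-1}).
\]

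Decompose $L = \bigoplus_{k \geq 0} L_k$ by $x_1$-grading, write $X^+ = \sum_{k \geq 1} X^+_{(k)}$ with $X^+_{(k)}$ of $x_1$-degree $k$ (similarly for $\wt X^+$), and $h|_{F_1} = \sum_k (h|_{F_1})_k$ with $(h|_{F_1})_k\colon F_1 \to L_k$. Expanding the equation above by $x_1$-degree, at level $m$ we obtain
\[
(X^+_{(m)} - \wt X^+_{(m)})\,(h|_{F_1})_0 = (\text{polynomial in }X^+_{(j)}, \wt X^+_{(j)}\text{ for }j < m).
\]
Proceeding by induction on $m$, the right-hand side vanishes, leaving $(X^+_{(m)} - \wt X^+_{(m)})(h|_{F_1})_0 = 0$.

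Two nondegeneracy statements then force $X^+_{(m)} = \wt X^+_{(m)}$. \emph{Surjectivity of $(h|_{F_1})_0$:} Since $-\omega_{x_{r_1}}$ restricts to zero on $\mf{sl}(F_2)$, the Lie subalgebra $\mf{sl}(F_2)$ acts trivially on $L_0 \cong F_0$, so $\rho_2(g_2)$ is the identity on $L_0$ and $(h|_{F_1})_0$ reduces to $P_0 \circ \rho_1(g_1^{-1})|_{F_1}$. Then $\mf{sl}(F_0) \subset \mf{sl}(F_1)$ generates $L_0$ from $v_-$ while preserving the $z_1$-grading (hence $F_1$), giving $L_0 \subset F_1$; thus $P_0|_{F_1}$ is surjective, and so is $(h|_{F_1})_0$. \emph{Injective action on $L_0$:} The $\mf{g}$-action embeds the $x_1$-degree-$m$ part of $\mf{g}$ into $\Hom_R(L_0 \otimes R, L_m \otimes R)$. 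Indeed, for distinct positive roots $\alpha$ with $\alpha_{x_1} = m$, the vectors $e_\alpha v_\lambda$ for $v_\lambda$ a weight vector in $L_0$ lie in distinct weight spaces of $L_m$ and are linearly independent when nonzero; and the nonvanishing $e_\alpha|_{L_0} \neq 0$ follows from an $\mf{sl}_2$-argument applied to the highest weight vector $v_{\lambda_\top}$ of $L_0 \cong F_0$ (as $\mf{sl}(F_0)$-representation), using that $\lambda_\top + \alpha = s_\alpha(\lambda_\top)$ lies in the Weyl orbit of $-\omega_{x_{r_1}}$.

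The main technical obstacle is verifying $e_\alpha|_{L_0} \neq 0$ for every positive root $\alpha$ of positive $x_1$-degree, which can be handled uniformly by observing that any such $\alpha$ is conjugate, via Weyl elements supported in $\mf{sl}(F_0) \times \mf{sl}(F_2)$, to a root for which the $\mf{sl}_2$-theoretic pairing with $\lambda_\top$ is manifestly $-1$. Combining surjectivity and injectivity forces $X^+_{(m)} = \wt X^+_{(m)}$ at each stage of the induction, giving $X^+ = \wt X^+$ as required.
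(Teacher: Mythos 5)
Your proof is correct in outline and takes a genuinely different route from the paper's. The paper proves the second bullet (and leaves the first as symmetric) by observing that $\exp(X^+)$ does not affect the restriction of $A_3$ to $F_2 \otimes L(\omega_{z_{r_3}})^\vee$, and then citing Proposition~\ref{prop:HST-determined-by-a3}, i.e.\ the appendix machinery showing a specialization $\Rgen \to R$ is uniquely pinned down by the bottom slice of $W(a_3)$, which sits inside $\bigwedge^{r_3} W(d_3)$. You instead run a self-contained, degree-by-degree argument in the $x_1$-grading on $L(\omega_{x_{r_1}})^\vee$: reduce to $\exp(X^+)h|_{F_1} = \exp(\wt X^+)h|_{F_1}$ using $\exp(Z^-)|_{F_1}=\Id$, and then extract $X^+_{(m)}-\wt X^+_{(m)}$ inductively via surjectivity of $(h|_{F_1})_0$ and injectivity of $\mf g_{m,*} \to \Hom(L_0,L_m)$. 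This has the advantage of not invoking the appendix results about $\Rgen$, at the cost of more bare-hands weight theory.

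The one place where the write-up is too loose is the claim that the pairing $\langle\alpha^\vee,\lambda_\top\rangle$ can always be driven to ``manifestly $-1$'' by $W_{\mf{g}^{(x_1)}}$-conjugation. That is not quite right: the pairing need not become $-1$ (and for $r_1 = 1$ it is $-n_{x_1}(\alpha)$, which can be $-2$ or $-3$ in type $E$), and more importantly what you actually need is just that it is \emph{strictly negative} for \emph{some} weight of $L_0$. A cleaner way to close this: since $\alpha_{x_1}>0$ and $L_0$ is the bottom $x_1$-graded component, $f_\alpha|_{L_0}=0$ automatically; so if also $e_\alpha|_{L_0}=0$, then $L_0$ is a trivial $\mf{sl}_2(\alpha)$-module and hence $\alpha^\vee$ acts as $0$ on every weight $\mu$ of $L_0$. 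Walking up the chain $\mu_0 = -\omega_{x_{r_1}}$, $\mu_j = \mu_{j-1} + \alpha_{x_{r_1-j+1}}$ of weights of $L_0 \cong F_0$ and imposing $\langle\alpha^\vee,\mu_j\rangle=0$ successively forces $n_{x_{r_1}}(\alpha)=n_{x_{r_1-1}}(\alpha)=\cdots=n_{x_2}(\alpha)=0$ and finally yields $\langle\alpha^\vee,\mu_{r_1-1}\rangle=-n_{x_1}(\alpha)<0$, a contradiction. With this patch the injectivity of $\mf g_{m,*}\otimes R \to \Hom_R(L_0\otimes R, L_m\otimes R)$ (distinct roots land in distinct weight spaces, and each $e_\alpha$ is nonzero on $L_0$) is solid, and the rest of your induction goes through. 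One other cosmetic point: $\rho_2(g_2)|_{L_0}$ is a unit scalar rather than literally the identity, since $g_2\in\GL(F_2)$ rather than $\SL(F_2)$; this does not affect surjectivity of $(h|_{F_1})_0$.
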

\begin{proof}
	The two statements are completely analogous, so we explain the second one. Observe that the action of $\exp(X^+)$ has no effect on the restriction of $A_3$ to $F_2 \otimes L(\omega_{z_{r_3}})^\vee$, which is $w^{(3)}$. Similarly, the action of $\exp(\wt{X}^+)$ has no effect on the restriction of $\wt{A}_3$ to $F_2 \otimes L(\omega_{z_{r_3}})^\vee$, which is also equal to $w^{(3)}$ by assumption. The statement then follows immediately from Proposition~\ref{prop:HST-determined-by-a3} since $W(a_3) \subset \bigwedge^{r_3} W(d_3)$ in $\Rgen$.
\end{proof}
\subsection{Independence of choice of splitting}\label{subsec:independence-of-splitting}
The construction of the matrices $A_i$ was reliant on a choice of splitting $\mb{F}^\ssc \otimes R \cong \mb{F}$. This was the only step that required a choice; the elements $Z^-, X^+$ were uniquely determined afterwards by comparison of $w_\ssc \otimes R$ to $w$ and $w_\ssc' \otimes R$ to $w'$ using Theorem~\ref{thm:parametrize}.

We now show that \eqref{eq:def-A1} is actually insensitive to our choice of $g_1$ and $g_2$. In the following we will often abuse notation and just write e.g. $F_j$ when we mean $F_j \otimes R$.
\begin{lem}
	Suppose that we pick a different isomorphism $\mb{F}_\mathrm{ssc} \otimes R \cong \mb{F}$, or equivalently, a different splitting $F_1 \xto{\gamma'} C \xto{\beta'} F_2$. Then there exist $\theta \in GL(C)$, $\eta_1 \in \Hom(F_0, C)$, and $\eta_2 \in \Hom(C,F_3)$ such that
	\[
	\gamma' = \theta\gamma + \eta_1 d_1, \quad \beta' = \beta\theta^{-1} + d_3 \eta_2.
	\]
	For the corresponding $g_1', g_2'$, we can write this as
	\[
	g_1'^{-1} = \theta(1+\theta^{-1} \eta_1)g_1^{-1}, \quad g_2' = g_2(1+\eta_2 \theta)\theta^{-1}
	\]
	recalling that $F_1 = F_0 \oplus C$ and $F_2 = F_3 \oplus C$.
\end{lem}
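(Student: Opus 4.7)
The plan is to read off $\theta$ and $\eta_1$ directly from the pair $(\gamma,\gamma')$, then independently read off some $\theta'\in GL(C)$ and $\eta_2$ from $(\beta,\beta')$, and finally use the compatibility with $d_2$ to force $\theta'=\theta^{-1}$.

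For the first piece, the fact that $g_1^{-1}=\bigl(\begin{smallmatrix}d_1\\\gamma\end{smallmatrix}\bigr)$ is an isomorphism implies that $\gamma$ restricts to an isomorphism $\ker d_1\xrightarrow{\sim}C$, and likewise for $\gamma'$. Setting $\theta\coloneqq(\gamma'|_{\ker d_1})\circ(\gamma|_{\ker d_1})^{-1}\in GL(C)$, the difference $\gamma'-\theta\gamma$ vanishes on $\ker d_1$, so by surjectivity of $d_1$ (from split exactness) it factors uniquely through $d_1$ as $\eta_1 d_1$ for some $\eta_1\in\Hom(F_0,C)$. Symmetrically, the fact that $g_2=(d_3,\beta)$ is an isomorphism means that $\beta$ induces an isomorphism $C\xrightarrow{\sim}F_2/d_3(F_3)$; comparing with $\beta'$ produces a unique pair $\theta'\in GL(C)$ and $\eta_2\in\Hom(C,F_3)$ with $\beta'=\beta\theta'+d_3\eta_2$.

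To pin down $\theta'=\theta^{-1}$, I would use that under the isomorphisms $g_1,g_2$ the differential $d_2$ corresponds to the middle differential of $\mb{F}^\ssc$, which acts as $1_C$ on the $C$-factor and is zero on $F_3$. Direct block computation gives
\[
g_1^{-1}d_2 g_2=\begin{pmatrix}d_1 d_2 d_3 & d_1 d_2\beta\\ \gamma d_2 d_3 & \gamma d_2\beta\end{pmatrix}=\begin{pmatrix}0&0\\0&\gamma d_2\beta\end{pmatrix},
\]
so matching with the standard form forces $\gamma d_2\beta=1_C$, and the same identity holds for the primed pair. Substituting the expressions for $\gamma'$ and $\beta'$ and using $d_1 d_2=0=d_2 d_3$ to kill the cross terms, the identity $\gamma'd_2\beta'=1_C$ collapses to $\theta\theta'=1_C$.

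The remaining matrix identifications are then formal: $g_1'^{-1}=\bigl(\begin{smallmatrix}1&0\\\eta_1&\theta\end{smallmatrix}\bigr)g_1^{-1}$, and factoring $\bigl(\begin{smallmatrix}1&0\\\eta_1&\theta\end{smallmatrix}\bigr)=\bigl(\begin{smallmatrix}1&0\\0&\theta\end{smallmatrix}\bigr)\bigl(\begin{smallmatrix}1&0\\\theta^{-1}\eta_1&1\end{smallmatrix}\bigr)$ recovers $g_1'^{-1}=\theta(1+\theta^{-1}\eta_1)g_1^{-1}$ in the paper's shorthand (where $\theta$ denotes $\mathrm{diag}(1,\theta)$ on $F_0\oplus C$); the $g_2'$ expression follows by the analogous manipulation on $F_3\oplus C$. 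I do not anticipate any serious obstacle — the only substantive step is the use of $\gamma d_2\beta=1_C$ to link the two a priori independent elements of $GL(C)$.
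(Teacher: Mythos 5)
Your proof is correct and follows essentially the same route as the paper's. The only stylistic difference is in pinning down $\theta'=\theta^{-1}$: the paper observes that any splitting $s\colon F_1\to F_2$ of $d_2$ induces, upon restriction and projection, the inverse of the isomorphism $F_2/\ker d_2 \to \ker d_1$, so $\beta\gamma$ and $\beta'\gamma'$ agree as maps $\ker d_1\to F_2/\ker d_2$; you instead express this as the concrete matrix identity $\gamma d_2\beta = 1_C$ (extracted from $g_1^{-1}d_2 g_2$ being the standard middle differential) and substitute. These are the same observation in slightly different clothing, and the rest of the argument — producing $\theta,\eta_1$ from $\gamma,\gamma'$, producing $\theta',\eta_2$ from $\beta,\beta'$, and the block-matrix factorization of $g_1'^{-1}$ and $g_2'$ — matches the paper.
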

\begin{proof}
	Both $\gamma, \gamma'$ must map $\ker d_1$ isomorphically onto $C$, so there exists an element $\theta \in GL(C)$ such that $\gamma' = \theta \gamma$ restricted to $\ker d_1$. The difference $\gamma' - \theta \gamma$ must then factor through $d_1$. This gives the first expression.
	
	One similarly argues the existence of $\theta' \in GL(C)$ such that $\beta' = \beta\theta'$ modulo $\ker d_2 \subset F_2$. Note that if $s\colon F_1 \to F_2$ is a splitting, then
	\[
	\ker d_1 \hookrightarrow F_1 \xto{s} F_2 \twoheadrightarrow F_2/(\ker d_2)
	\]
	must be inverse to the map induced by $d_2$. In particular, $\beta\gamma$ and $\beta'\gamma'$ must agree as maps $(\ker d_1) \to F_2/(\ker d_2)$, which means $\theta' = \theta^{-1}$. The expression for $\beta'$ thus follows.
\end{proof}
Let $\mf{g}_{m,n}$ denote the part of the Lie algebra $\mf{g}$ in $(x_1,z_1)$-bidegree $(m,n)$. We fix a pair
\[
(i,\omega) \in \{(1,\omega_{x_{r_1}}), (2,\omega_{y_{r_2-2}}), (3,\omega_{z_{r_3}})\}
\]
and let $\rho_1, \rho_2$ denote the actions of $\GL(F_1), \GL(F_2)$ on $L(\omega)^\vee$.

Note that $1+\theta^{-1}\eta_1\in \SL(F_1)$. It can be written as $\exp(\theta^{-1}\eta_1)$, viewing $\theta^{-1}\eta_1 \in F_0^* \otimes C = \mf{g}_{1,0}$. Similarly $1 + \eta_2 \theta = \exp(\eta_2 \theta)$ viewing $\eta_2\theta \in C^* \otimes F_3 = \mf{g}_{0,-1}$.

If we go through the construction of \S\ref{subsec:splitcase} with $g_1',g_2'$, we get
\[
A_i' = \exp (X'^+) \rho_2(g_2') \rho_1(g_1'^{-1}) \exp (Z'^-)
\]
Expanding this using the above observations, we have
\[
A_i' = \exp(X'^+) \rho_2(g_2) \exp (\eta_2 \theta) \rho_2 (\theta^{-1}) \rho_1(\theta) \exp(\theta^{-1} \eta_1) \rho_1(g_1^{-1}) \exp (Z'^-).
\]
Now we use:
\begin{lem}
	The map $\GL(C) \to \GL(F_1) \xto{\rho_1} \Aut L(\omega)^\vee$ agrees with $\GL(C) \to \GL(F_2) \xto{\rho_2} \Aut L(\omega)^\vee$.
\end{lem}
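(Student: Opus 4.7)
The strategy is to use that $\GL(C)$ is connected to reduce the equality to one of Lie algebra representations, then decompose $\mf{gl}(C) = \mf{sl}(C) \oplus \mb{C}\cdot \Id_C$ and treat each summand.

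For the simple summand, I would invoke Example~\ref{ex:sl-subalgebras}, which exhibits $\mf{sl}(F_1)$ and $\mf{sl}(F_2)$ as subalgebras of $\mf{g}$ via the ordered chains $y_{r_2-2},\ldots,u,x_1,\ldots,x_{r_1}$ and $y_{r_2-2},\ldots,u,z_1,\ldots,z_{r_3}$ respectively. Under the decompositions $F_1 = F_0 \oplus C$ and $F_2 = F_3 \oplus C$, the subalgebra $\mf{sl}(C)$ sits inside each as the one generated by the common initial sub-chain $y_{r_2-2},\ldots,u$. Hence the two composite embeddings $\mf{sl}(C) \hookrightarrow \mf{g}$ coincide, so their actions on the $\mf{g}$-representation $L(\omega)^\vee$ agree.

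For the central element $\Id_C$, the argument is more delicate. Since $\Id_C$ commutes with $\mf{sl}(C)$ inside both $\mf{gl}(F_1)$ and $\mf{gl}(F_2)$, and the two induced $\mf{sl}(C)$-actions on $L(\omega)^\vee$ agree by the previous step, both $\Id_C$-actions preserve the common $\mf{sl}(C)$-isotypic decomposition. Equality on $L(\omega)^\vee$ thus reduces to a weight-by-weight comparison, which I would carry out as follows: write $\Id_C = \varpi^\vee_k + \tfrac{r_2}{f_k}\Id_{F_k}$ inside $\mf{gl}(F_k)$, where $\varpi^\vee_k$ is the fundamental coweight of $\mf{sl}(F_k)$ dual to $\alpha_{x_1}$ for $k=1$ and $\alpha_{z_1}$ for $k=2$. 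The traceless pieces lie in the Cartan $\mf{h} \subset \mf{g}$ via the embeddings $\mf{sl}(F_k)\hookrightarrow \mf{g}$, while the central scalars $\Id_{F_k}$ act on each $\prod\GL(F_j)$-isotypic component of $L(\omega)^\vee$ by characters determined by the multigrading on $\Rgen$. The required equality is then a numerical identity pairing these Cartan coweights with the weights appearing in the explicit $z_1$-graded decomposition from~\eqref{eq:crit-reps}.

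The main obstacle is this final numerical verification, which encodes a compatibility between the intrinsic Cartan pairings in $\mf{g}$ and the normalization of the $\prod\GL(F_j)$-action from the construction of $\Rgen$. In practice the identity only needs to be checked on a single representative of each $\mf{sl}(C)$-isotypic summand, and it propagates across the $z_1$-graded pieces via the $\mf{g}$-action; the bulk of the effort is therefore bookkeeping rather than any conceptual novelty.
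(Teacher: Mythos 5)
Your proposal takes essentially the same route as the paper: split $\GL(C)$ into $\SL(C)$ and scalars, handle $\SL(C)$ by noting both $\rho_1|_{\SL(C)}$ and $\rho_2|_{\SL(C)}$ factor through the common embedding $\mf{sl}(C)\subset \mf{g}_{0,0}$ corresponding to the chain $y_{r_2-2},\ldots,y_1,u$, and then reduce to the scalar case. The paper explicitly omits the verification for scalars, and you sketch a weight-theoretic plan for it without carrying it out, so in content the two proofs match; do note when executing the scalar check that, per Remark~\ref{rem:gl-vs-sl}, the $\mf{gl}(F_i)$-actions on $L(\omega)^\vee$ do not arise from an inclusion $\prod\mf{gl}(F_i)\hookrightarrow\mf{g}$, so the normalizations of the central characters must be traced through the $\prod\GL(F_i)$-equivariant decomposition of $\Rgen$ rather than read off from the Cartan pairing in $\mf{g}$ alone.
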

\begin{proof}
	The statement is certainly true for $\SL(C)$ because both actions can be seen through $\mf{sl}(C)\subset \mf{g}_{0,0}$. So it is sufficient to check this statement for scalars, and we omit this.
\end{proof}
Hence $\rho_2(\theta^{-1})$ and $\rho_1(\theta)$ cancel, and we are left with
\[
A_i' = \exp(X'^+) \rho_2(g_2) \exp (\eta_2 \theta) \exp(\theta^{-1} \eta_1) \rho_1(g_1^{-1}) \exp (Z'^-).
\]
Elements of $\mf{g}_{1,0}$ and $\mf{g}_{0,-1}$ commute because $\mf{g}_{1,-1} = 0$, so we can interchange the middle two terms. Note that
\begin{align*}
	\theta^{-1}\eta_1 &\in \mf{g}_{1,0} = F_0^* \otimes C\\
	&\subset \mf{g}_{1,*} = F_0^* \otimes \bigwedge^{f_3+1}F_2 \otimes \bigwedge^{f_3} F_3^*
\end{align*}
Applying $g_2$ to $\theta^{-1}\eta_1$ gives an element $X_1\in \mf{g}_{1,*}$ such that
\[
\exp (X_1) \rho_2(g_2) = \rho_2(g_2) \exp(\theta^{-1}\eta_1).
\]
Similarly, by applying $g_1$ to $\eta_2 \theta$, we get $Z_1 \in \mf{g}_{*,-1}$ such that
\[
\rho_1(g_1^{-1}) \exp (Z_1) = \exp(\eta_2\theta) \rho_1(g_1^{-1}),
\]
allowing us to write
\[
A_i' = \exp(X'^+) \exp (X_1) \rho_2(g_2) \rho_1(g_1^{-1}) \exp(Z_1) \exp(Z'^-).
\]
Baker-Campbell-Hausdorff yields elements $\widetilde{X}^+ \in \mbf{L}' \cotimes R$ and $\wt{Z}^- \in \mbf{L} \cotimes R$ such that
\[
\exp (\wt{X}^+) = \exp(X'^+)\exp(X_1), \quad \exp (\wt{Z}^-) = \exp (Z_1)\exp (Z'^-),
\]
and so
\[
A_i' = \exp(\wt{X}^+) \rho_2(g_2) \rho_1(g_1) \exp(\wt{Z}^-).
\]
However, compare this to
\[
A_i = \exp(X^+) \rho_2(g_2) \rho_1(g_1) \exp(Z^-).
\]
The hypotheses of Lemma~\ref{lem:matrix-A-comparison} are met by construction, and we deduce that $X^+ = \wt{X}^+$ and $Z^- = \wt{Z}^-$. In particular $A_i = A_i'$ as desired.

Having established that the matrices $A_i$ are independent of the choice of splitting, Theorem~\ref{thm:surjectivity} readily follows.
\begin{proof}[Proof of Theorem~\ref{thm:surjectivity}]
	We fix $w$ and $w'$ as in \S\ref{subsec:surjectivity-proof-setup}. Let $h_1,h_2 \in I_{f_0}(d_1)$ be a regular sequence, which exists because $\grade I_{f_0}(d_1)  = 3 \geq 2$. We perform the construction of \S\ref{subsec:splitcase} for the split exact complex $\mb{F} \otimes R_{h_1}$, obtaining matrices $A_i$ invertible over $R_{h_1}$. We repeat the construction for $\mb{F} \otimes R_{h_2}$, obtaining matrices $A_i'$ invertible over $R_{h_2}$. The results of this subsection then show that the constructions agree on the common localization $R_{h_1 h_2}$. Hence the matrices $A_i = A_i'$ have entries in $R_{h_1} \cap R_{h_2} = R$. The same applies to their inverses $A_i^{-1} = A_i'^{-1}$, therefore the matrices $A_i$ are invertible over $R$. Since $w^{(i)}$ consists of rows from $A_i$, we obtain that $w^{(i)}$ is surjective.
\end{proof}

\subsection{First applications}\label{subsec:first-apps}
The major applications of Theorem~\ref{thm:surjectivity} will be explored in later sections. We conclude this section with a easy but surprising consequence in the graded setting: the theorem gives a restriction on graded Betti numbers. Let $k$ be a field of characteristic zero, $R = k[x_0,\ldots,x_m]$, and $M$ a graded Cohen-Macaulay $R$-module. Suppose the graded minimal free resolution of $M$ has the form
\[
0 \to \bigoplus_{j=1}^{b_3} R(-s_{3j}) \to \bigoplus_{j=1}^{b_2} R(-s_{2j}) \to \bigoplus_{j=1}^{b_1} R(-s_{1j}) \to \bigoplus_{j=1}^{b_0} R(-s_{0j}) \to M \to 0.
\]
\begin{cor}\label{cor:grading-restriction}
	In the setup above, if $(b_0,b_1,b_2,b_3)$ is Dynkin, then the graded module
	\[
	R \otimes L(\omega_{x_{1}})^\vee = \bigwedge^{f_2} F_2  \otimes [\bigwedge^{r_2} F_1^* \otimes \bigwedge^{f_3} F_3^* \oplus \cdots ]
	\]
	must have a generator in degree zero.
\end{cor}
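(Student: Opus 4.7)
The plan is to combine Theorem~\ref{thm:surjectivity} with the equivariant refinement from Proposition~\ref{prop:equivariant-p}(2) and then perform a simple degree count. First, I would observe that $M$ is a Cohen--Macaulay module of projective dimension three over the regular ring $R$, so $\Tor$-vanishing with $k$ is concentrated in degrees $0$--$3$ and dually $\mathrm{Ext}^i(M,R)=0$ for $i<3$. This last vanishing is precisely the acyclicity of $\mb{F}^*$, so the hypothesis of Theorem~\ref{thm:surjectivity} is satisfied. Consequently, for any homomorphism $w\colon\Rgen\to R$ specializing $\Fgen$ to $\mb{F}$, the structure map $w^{(a_2)}\colon L(\omega_{x_1})^\vee \otimes R \to \bigwedge^{f_2} F_2^* \otimes R$ is surjective.

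Next, I would use Proposition~\ref{prop:equivariant-p}(2) to choose a specific $w$ that is homogeneous of degree zero with respect to the grading induced on $\Rgen\otimes R$ by the graded structure of $\mb{F}$. For this choice, $w^{(a_2)}$ becomes a degree-zero surjection between graded free $R$-modules, where the source $L(\omega_{x_1})^\vee\otimes R$ is the free $R$-module on the graded basis inherited from $L(\omega_{x_1})^\vee$ as a $\prod\GL(F_i)$-representation.

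The degree count then finishes things. The target $\bigwedge^{f_2} F_2^*\otimes R$ is the free cyclic module $R\bigl(\textstyle\sum_j s_{2j}\bigr)$, whose generator sits in internal degree $-\sum_j s_{2j}$. Any degree-zero $R$-linear surjection onto such a cyclic module must carry some generator of the source to a unit multiple of the target's generator, forcing $L(\omega_{x_1})^\vee$ to contain a weight vector of internal degree $-\sum_j s_{2j}$. Tensoring with the one-dimensional space $\bigwedge^{f_2} F_2$, which is concentrated in degree $\sum_j s_{2j}$, shifts this weight vector into degree zero of $\bigwedge^{f_2} F_2 \otimes L(\omega_{x_1})^\vee$. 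This is the module displayed in the statement, and it therefore has a generator in degree zero, as claimed.

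The argument is short and essentially mechanical once Theorem~\ref{thm:surjectivity} and Proposition~\ref{prop:equivariant-p}(2) are in hand; all the difficulty was already front-loaded into those two results. The only real care needed is bookkeeping: tracking how the internal degrees $s_{ij}$ from the resolution descend via the $\prod\GL(F_i)$-action to a grading on each weight space of $L(\omega_{x_1})^\vee$, and verifying that the $\bigwedge^{f_2} F_2$ twist moves the required basis element from degree $-\sum_j s_{2j}$ to degree $0$.
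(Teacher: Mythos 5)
Your proof is correct and takes essentially the same route as the paper: acyclicity of $\mb{F}^*$ from Cohen--Macaulayness, surjectivity of $w^{(a_2)}$ from Theorem~\ref{thm:surjectivity}, a homogeneous degree-zero choice of $w$ from Proposition~\ref{prop:equivariant-p}(2), and a degree count forcing a degree-zero generator. The only difference is that you spell out the degree bookkeeping explicitly (locating the generator of $L(\omega_{x_1})^\vee\otimes R$ in degree $-\sum_j s_{2j}$ and then twisting by $\bigwedge^{f_2}F_2$), whereas the paper compresses this into the observation that $w^{(a_2)}$ being nonzero mod $\mf{m}$ forces some matrix entry to have degree exactly zero.
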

\begin{proof}
	For a graded resolution, it is possible to pick $w\colon \Rgen \to \mb{C} \otimes R$ such that the higher structure maps are all homogeneous of degree zero; see Proposition~\ref{prop:equivariant-p} and Example~\ref{ex:non-perfect}. Since Theorem~\ref{thm:surjectivity} implies that $w^{(a_2)}$ is nonzero mod $\mf{m} = (x_1,\ldots,x_m)$, some entry of the matrix must have degree exactly zero.
\end{proof}
One case of particular interest is $F_0 = R$, so $M = R/I$. As $a_1$ is an isomorphism, the above equivalently says that
\[
R \otimes L(\omega_{x_{1}})^\vee = F_1 \oplus F_3^* \otimes \bigwedge^{3} F_1 \oplus \cdots,
\]
the domain of $w^{(1)}$, has a generator in degree zero. Each graded component is a subrepresentation of $F_1 \otimes \mf{g}_1^{\otimes j}$ for some $j \geq 0$, where $\mf{g}_1 = F_3^* \otimes \bigwedge^2 F_1$.
\begin{example}
	Suppose $R/I$ is Cohen-Macaulay and $(1,b_1,b_2,b_3)$ is Dynkin. Since the module $F_1$ is generated in positive degrees, it follows that $F_3^* \otimes \bigwedge^2 F_1$ must have a generator in negative degree in order for $R \otimes L(\omega_{x_{1}})^\vee$ to have a generator in degree zero. Thus $2\min(s_{1j}) < \max(s_{3j})$.
	
	From the perspective of linkage, this example can also be obtained as a corollary of Theorem~\ref{thm:licciconj}, which we will establish in a forthcoming paper. The inequality $2\min(s_{1j}) < \max(s_{3j})$ then follows from \cite[Corollary 5.13]{Huneke-Ulrich87}.
\end{example}
\begin{example}
	If $R/I$ is Cohen-Macaulay and $(1,b_1,b_2,b_3)$ is Dynkin, then at least one $s_{1j}$ is even or at least one $s_{3j}$ is odd. If this were not the case, $F_1$ would be generated in odd degree and $F_3^* \otimes \bigwedge^2 F_1$ would be generated in even degree. Consequently $R \otimes L(\omega_{x_1})^\vee$ would have all generators in odd degree, thus none in degree zero.
\end{example}

\section{Classification of perfect ideals with Dynkin Betti numbers}\label{sec:classify}
We will now apply Theorem~\ref{thm:surjectivity} to classify perfect ideals $I$ of grade 3 in a local ring $(R,\mf{m},k)$ of equicharacteristic zero (c.f. Assumption~\ref{ass:base-field}), with the assumption that $R/I$ has Betti numbers in the Dynkin range. If $R$ is regular, this is equivalent to $R/I$ being Cohen-Macaulay, with one of the following conditions on the type $t$ and deviation $d$:
\begin{itemize}
	\item $t = 1$,
	\item $d = 1$,
	\item $t = 2$ and $d \leq 4$, or
	\item $t \leq 4$ and $d = 2$.
\end{itemize}
The first two cases are already well-understood. The novel results come from the last two---these correspond to Dynkin types $E_n$.

We will make use of the important connection between $\Rgen$ and the homogeneous space $G/P_{x_1}^+$ laid out in Corollary~\ref{cor:GP-homogeneous-ring-wa2}. Throughout this whole section, we fix a Dynkin format $(1,f_1,f_2,f_3) = (1,3+d,2+d+t,t)$. Let $T_{2,d+1,t+1}$ be the corresponding diagram
\[\begin{tikzcd}
	x_1 \ar[r,dash] & u \ar[r,dash] \ar[d,dash] & y_1 \ar[r,dash] & \cdots \ar[r,dash] & y_d\\
	& z_1 \ar[d,dash]\\
	& \vdots \ar[d,dash]\\
	& z_t
\end{tikzcd}\]
Let $V = L(\omega_{x_1})$, and let $v \in V$ be a highest weight vector. By abuse of notation we also write $v \in G/P_{x_1}^+ \subset \mb{P}(V)$ for its span, i.e. the Borel-fixed point.

Let $I \subset R$ be a perfect ideal of grade 3 such that $R/I$ is resolved by $\mb{F}$ of the format $(1,f_1,f_2,f_3)$. We do not require that $\mb{F}$ be minimal. Let $w\colon \Rgen \to \mb{F}$ be a map specializing the generic resolution to $\mb{F}$. From Theorem~\ref{thm:surjectivity} we know that $w^{(1)}$ is surjective, equivalently nonzero mod $\mf{m}$, and hence by Corollary~\ref{cor:GP-homogeneous-ring-wa2} it yields a map $\Spec R \to G/P_{x_1}^+$. In particular, by looking at $w^{(1)} \otimes k$, where $k = R/\mf{m}$ is the residue field, we get a $k$-point of $G/P_{x_1}^+$. The next result says that the $P_{z_1}^-$-orbit (c.f. \S\ref{subsec:cells-P-orbits}) containing $w^{(1)} \otimes k$ is well-defined and preserved under local specialization.

\begin{prop}\label{prop:classification-by-lowest-unit}
	Suppose that $\mb{F}$ has Dynkin format and resolves $R/I$ for a perfect ideal $I \subset R$. Let $w\colon \Rgen \to R$ specialize the generic resolution to $\mb{F}$. The $P_{z_1}^-$-orbit containing the $k$-point determined by $w^{(1)} \otimes k$ depends only on the ideal $I \subset R$ and not on the choice of resolution $\mb{F}$ or map $w\colon \Rgen \to R$ specializing the generic resolution to $\mb{F}$.
	
	If $I' \subset R'$ is a perfect ideal in regular local ring $(R',\mf{m}',k')$ of equicharacteristic zero and $R \xto{\varphi} R'$ is a local homomorphism such that $\varphi(I)R' = I'$, then the orbits determined by $I$ and $I'$ are the same.
\end{prop}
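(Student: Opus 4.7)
The plan is to verify invariance under three sources of non-uniqueness: the specialization $w$ (for fixed $\mb{F}$), the resolution $\mb{F}$ (for fixed $(R,I)$), and the local map $\varphi$. The first is immediate from Theorem~\ref{thm:parametrize}: two specializations of $\Fgen$ to the same $\mb{F}$ differ by composition with $\exp X$ for some $X \in \mbf{L}\cotimes R = \hat{\mf{n}}_{z_1}^- \cotimes R$, so reducing mod $\mf{m}$ the $k$-point transforms by an element of $\exp(\mf{n}_{z_1}^-) \subset P_{z_1}^-$ (Remark~\ref{rem:P-orbit}). The third is equally routine: $\mb{F}\otimes_R R'$ resolves $R'/I'$ in the same format, $\varphi\circ w$ specializes $\Fgen$ to it, and the $k'$-point is the base change of $w^{(1)} \otimes k$ along the extension $k \hookrightarrow k'$; since $P_{z_1}^-$-orbits on $G/P_{x_1}^+$ are defined combinatorially via Weyl-group double cosets, they are stable under field extension.

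The substantive step is invariance under change of $\mb{F}$. Two resolutions of $R/I$ with the same Dynkin format are isomorphic as complexes, so $\mb{F}' = g\cdot \mb{F}$ for some $g = (g_0,g_1,g_2,g_3) \in \prod \GL(F_i)(R)$. Taking $w' = w\circ g$ yields a specialization for $\mb{F}'$, and $w'^{(1)}\otimes k$ differs from $w^{(1)}\otimes k$ by the action of $g\otimes k$ on $L(\omega_{x_1})^\vee$. The components $g_1\otimes k$ and $g_3\otimes k$ act through $\SL(F_1)\times \SL(F_3) \subset P_{z_1}^-$ (modulo central scalars that act trivially on $\mb{P}(L(\omega_{x_1}))$), and so preserve the orbit. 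The principal obstacle is the action of $g_2 \in \GL(F_2)$: the subgroup $\SL(F_2)$ is not contained in $P_{z_1}^-$ since the simple coroot $\alpha_{z_1}^\vee$ lies in the Cartan of $\mf{sl}(F_2)$, and the $\GL(F_2)$-action on $\Rgen$ does not commute with the $\mf{g}$-action (Remark~\ref{rem:gl-vs-sl}). To overcome this, we exploit the freedom of the first step: any $w'$ can be further replaced by $w'\circ \exp Y$ for arbitrary $Y \in \hat{\mf{n}}_{z_1}^-\cotimes R$, and the claim is that $Y$ can be chosen so that the net transformation of the $k$-point lies in $P_{z_1}^-$. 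The calculation parallels the splitting-independence argument of \S\ref{subsec:independence-of-splitting}: one uses Baker--Campbell--Hausdorff within $\mf{g}$ to combine the $g_2$-contribution with compensations in $\exp(\hat{\mf{n}}_{z_1}^-)$, leveraging that the $k$-point lies in the Schubert variety $X^w$ because $d_1 \otimes k = 0$ for any minimal resolution.

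The main obstacle is carrying out this last compensation explicitly. Since the $\GL(F_2)$-action on $\Rgen$ does not admit a clean Lie-algebraic description inside $\mf{g}$, the cancellation must be traced through the construction of $\Rgen$ via higher structure maps reviewed in \S\ref{bg:Rgen}.
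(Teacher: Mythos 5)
The place your argument goes astray is the treatment of $g_2 \in \GL(F_2)$. You conflate two distinct group actions: the copy of $\SL(F_2) \subset G$ attached to the vertex sequence $y_{r_2-2},\ldots,u,z_1,\ldots,z_{r_3}$ in Example~\ref{ex:sl-subalgebras}, and the $\GL(F_2)$ acting on $\Rgen$ by ring automorphisms coming from its construction. Remark~\ref{rem:gl-vs-sl} explicitly warns that these are different, and only the latter is what one precomposes with to pass between two resolutions of $R/I$. On the critical representation $W(d_1) = F_0^*\otimes L(\omega_{x_1})^\vee$, whose $z_1$-graded pieces involve $F_2$ only through top exterior powers $\bigwedge^{f_2}F_2^*$ inside the $M^*$-factors, the ring-automorphism $\GL(F_2)$ acts on each graded piece purely by a power of $\det$; in particular $\SL(F_2)$ acts trivially. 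The induced action on $\mb{P}(L(\omega_{x_1}))$ therefore factors through the maximal torus (concretely, the one-parameter subgroup $\exp(\mb{C}h_{z_1})$), which lies in $P_{z_1}^-$. So the $g_2$-contribution preserves $P_{z_1}^-$-orbits automatically, and the Baker--Campbell--Hausdorff ``compensation'' you propose is neither necessary nor, as you concede in your closing paragraph, actually carried out. That unfinished step is a genuine gap in your write-up, even though it turns out there was nothing there to fill.

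A smaller inaccuracy: for $g_1,g_3$ you appeal to ``central scalars that act trivially on $\mb{P}(L(\omega_{x_1}))$.'' The central $\mb{C}^\times$-parts of $\GL(F_1)$ and $\GL(F_3)$ do \emph{not} act trivially on $\mb{P}(L(\omega_{x_1}))$: they scale each $z_1$-graded piece by a different determinant power. But, like the $\GL(F_2)$-action, they factor through the torus and hence also land in $P_{z_1}^-$; the conclusion stands for the right reason. Once one recognizes that the whole ring-automorphism action of $\prod\GL(F_i)$ on $W(d_1)$ factors through $\exp(\mf{n}_{z_1}^-)\rtimes(\SL(F_3)\times\SL(F_1))$ together with the torus, the paper's proof becomes a one-liner: any two specializations for resolutions of $R/I$ are related by precomposition with some $g\in\prod\GL(F_i)$ followed by $\exp X$ for some $X\in\mbf{L}\cotimes R$ (Theorem~\ref{thm:parametrize}), and both operations preserve $P_{z_1}^-$-orbits. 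Your handling of the $\exp X$ step and of the local homomorphism $\varphi$ is correct and matches the paper.
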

\begin{proof}
	For the first point, let $\mb{F}'$ be a different resolution of format $(1,f_1,f_2,f_3)$ for $R/I$, and $w'\colon \Rgen \to R$ a map specializing the generic resolution to $\mb{F}'$. We view $w,w'$ as maps $R \otimes \Rgen \to R$. By precomposing $w$ with the action of an appropriate element $g\in \prod \GL(F_i)$ on $R \otimes \Rgen$, we can arrange so that $wg \colon R \otimes \Rgen \to R$ specializes the generic resolution to $\mb{F}'$. Since both $w'$ and $wg$ have this property, it follows from Theorem~\ref{thm:parametrize} that there is an element $X \in \mb{L} \otimes R$ such that $wg\exp X = w'$. Both the actions of $g$ and $\exp X$ on $V$ preserve the $P_{z_1}^-$ orbit that contains $w^{(1)} \otimes k$, and the claim follows.
	
	For the other point of the proposition, simply note that $\mb{F} \otimes R'$ is a resolution of $R'/I'$, and
	\[
	(w^{(1)} \otimes_R R') \otimes_{R'} k' = (w^{(1)} \otimes_R k) \otimes_k k'
	\]
	because we assumed $\varphi$ to be a local homomorphism, so it induces an inclusion of residue fields $k \hookrightarrow k'$.
\end{proof}

Let $W$ denote the Weyl group of $G$ and, for $j \in T_{2,d+1,t+1}$, $W_{P_{j}} \subset W$ the subgroup generated by all simple reflections $\{s_i\}_{i\neq j}$. The Schubert cells of $G/P_{x_1}^+$ are indexed by the torus-fixed points. In the Pl\"ucker embedding $G/P_{x_1}^+ \hookrightarrow \mb{P}(V)$, these are exactly the extremal weight lines, which are in correspondence with $W/W_{P_{x_1}}$.

The $P_{z_1}^-$-orbits in $G/P_{x_1}^+$, each of which is a union of Schubert cells as defined in \S\ref{subsec:cells-P-orbits}, are indexed by the double cosets $W_{P_{z_1}} \backslash W / W_{P_{x_1}}$. If $\sigma \in W$ is the minimal length representative of such a double coset, then $\sigma v \in V$ is a highest weight vector for an extremal representation of $\mf{sl}(F_3) \times \mf{sl}(F_1)$ inside of $V$. In this manner, the $P_{z_1}^-$-orbits correspond to these extremal representations, and we obtain the following algebraic translation of the above proposition, in the language of higher structure maps.
\begin{prop}
	Suppose that $\mb{F}$ has Dynkin format and resolves $R/I$ for a perfect ideal $I \subset R$. Let $w\colon \Rgen \to R$ specialize the generic resolution to $\mb{F}$. In the $z_1$-graded decomposition
	\[
	R \otimes L(\omega_{x_{1}})^\vee = F_1 \oplus F_3^* \otimes \bigwedge^{3} F_1 \oplus \cdots
	\]
	there is a lowest irreducible $\mf{gl}(F_3)\times \mf{gl}(F_1)$-representation to which the restriction of $w^{(1)}$ is nonzero mod $\mf{m}$. This representation depends only on $I\subset R$ and is necessarily extremal.
\end{prop}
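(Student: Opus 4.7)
The plan is to show this proposition is a direct algebraic translation of Proposition~\ref{prop:classification-by-lowest-unit}, the work lying in matching ``lowest irreducible $\mf{gl}(F_3)\times \mf{gl}(F_1)$-representation on which $w^{(1)}$ is nonzero mod $\mf{m}$'' with ``$P_{z_1}^-$-orbit of the $k$-point $p_w$ determined by $w^{(1)}\otimes k$.'' By Corollary~\ref{cor:GP-homogeneous-ring-wa2}, the map $w^{(1)}=w^{(a_2)}$ (since $r_1=1$) induces a $\mb{C}$-algebra map $\mf{A}\to R$ and hence a $k$-point $p_w\in G/P_{x_1}^+(k)$, and Proposition~\ref{prop:classification-by-lowest-unit} already tells us the $P_{z_1}^-$-orbit $\mc{O}(p_w)$ depends only on $I$. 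So I only need to match this orbit to a canonical extremal subrepresentation of $V^\vee := L(\omega_{x_1})^\vee$.

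First I would unpack the $\mf{sl}(F_1)\times\mf{sl}(F_3)$-structure. By Example~\ref{ex:sl-subalgebras}, $\mf{g}^{(z_1)}=\mf{sl}(F_1)\times\mf{sl}(F_3)$, so the $z_1$-graded decomposition $V^\vee=\bigoplus_j V^\vee_j$ refines to a decomposition into $\mf{gl}(F_1)\times\mf{gl}(F_3)$-irreducibles $V^\vee_j=\bigoplus_\alpha V^\vee_{j,\alpha}$. Next, from \S\ref{subsec:cells-P-orbits}, the $P_{z_1}^-$-orbits of $G/P_{x_1}^+$ are indexed by the minimal-length representatives $\sigma$ of $W_{P_{z_1}}\backslash W/W_{P_{x_1}}$; each torus-fixed point $\sigma v$ sits in some $z_1$-graded piece $V_{j(\sigma)}$, and its $\mathrm{SL}(F_1)\times\mathrm{SL}(F_3)$-orbit spans an extremal irreducible subrepresentation of that piece. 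Dually, the Plücker coordinate $p_\sigma\in V^\vee_{j(\sigma)}$ generates an extremal irreducible subrepresentation $V^\vee(\sigma)\subseteq V^\vee_{j(\sigma)}$, and these $V^\vee(\sigma)$ are exactly the extremal $\mf{gl}(F_1)\times\mf{gl}(F_3)$-subrepresentations of $V^\vee$.

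The central lemma I would then prove is: the orbit $\mc{O}(p_w)$ equals the orbit of $\sigma_0 v$ if and only if $\sigma_0$ is minimal (with respect to the $z_1$-grading, i.e.\ $j(\sigma_0)$ minimal among indices where $w^{(1)}\otimes k$ fails to vanish) such that $w^{(1)}\otimes k$ is nonzero on $V^\vee(\sigma_0)$. The ``only if'' direction uses that $\exp(\mf{n}_{z_1}^-)$ strictly lowers $z_1$-grading while $\mathrm{SL}(F_1)\times\mathrm{SL}(F_3)$ preserves it, so any point in $\mc{O}(\sigma_0 v)$ has a nonzero component in $V_{j(\sigma_0)}$ (projecting into the extremal subrep dual to $V^\vee(\sigma_0)$) and vanishes on $V^\vee_{<j(\sigma_0)}$; dualizing gives the claimed nonvanishing pattern for $w^{(1)}\otimes k$. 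The ``if'' direction uses that $\mc{O}(p_w)$ is determined by the set of Plücker coordinates vanishing on $p_w$ (by linearity of the defining equations of orbit closures). This characterization automatically delivers both existence and uniqueness of the lowest such extremal representation; combined with Proposition~\ref{prop:classification-by-lowest-unit}, it depends only on $I$, and extremality is built into the identification $\mc{O}(p_w)\leftrightarrow V^\vee(\sigma_0)$.

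The main obstacle I anticipate is in the central lemma, specifically verifying that ``lowest'' is unambiguous when $V^\vee_{j(\sigma_0)}$ contains more than one extremal irreducible: I need to show that if $w^{(1)}\otimes k$ is nonzero on $V^\vee(\sigma_0)$ and vanishes on all $V^\vee_j$ for $j<j(\sigma_0)$, then it vanishes on $V^\vee(\sigma')$ for every other $\sigma'$ with $j(\sigma')=j(\sigma_0)$. This follows because the distinct extremal $\mf{sl}(F_1)\times\mf{sl}(F_3)$-subrepresentations of $V^\vee_{j(\sigma_0)}$ are mutually non-isomorphic (their highest weights are distinct), and the corresponding orbits of $\sigma v$ in $V_{j(\sigma_0)}$ are disjoint $\mathrm{SL}(F_1)\times\mathrm{SL}(F_3)$-orbits, so a point of $G/P_{x_1}^+$ whose leading $z_1$-graded component lies in the orbit of $\sigma_0 v$ cannot pair nontrivially with Plücker coordinates from a different extremal subrepresentation of the same grade. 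The remaining work is mostly bookkeeping around the $z_1$-grading shift conventions of \S\ref{sec:grading}.
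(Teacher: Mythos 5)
Your approach is the paper's own: the proposition is stated right after the identification ``$P_{z_1}^-$-orbit of $\sigma v$ $\leftrightarrow$ extremal $\mf{sl}(F_1)\times\mf{sl}(F_3)$-subrepresentation generated by $p_\sigma$,'' and the paper provides no separate proof beyond calling it an ``algebraic translation'' of Proposition~\ref{prop:classification-by-lowest-unit}. Your central lemma and its bookkeeping around the $z_1$-grading are exactly what that translation amounts to, so you are on the intended route.

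One step should be tightened, though. In the ``main obstacle'' paragraph you only rule out nonzero pairing with a \emph{different extremal} subrepresentation of $V^\vee_{j(\sigma_0)}$, but uniqueness of the lowest irreducible also requires ruling out nonzero pairing with the \emph{non-extremal} irreducibles in that same graded piece (for example, $\bigwedge^2 F_3^*\otimes\bigwedge^5 F_1$ in the $E_6$ middle component of Example~\ref{ex:E6}). The cleanest way to close this: the leading $z_1$-graded component of $p_w$ lies in the $\SL(F_1)\times\SL(F_3)$-submodule $U(\sigma_0)\subseteq V_{j(\sigma_0)}$ generated by $\sigma_0 v$, and $U(\sigma_0)$ occurs in $V_{j(\sigma_0)}$ with multiplicity one. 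Indeed, its $\mf{sl}(F_1)\times\mf{sl}(F_3)$-highest weight, paired with the fixed $z_1$-degree $j(\sigma_0)$, reconstructs the $\mf{g}$-weight $\sigma_0\omega_{x_1}$, which is extremal and hence has a one-dimensional weight space; so the space of $\SL\times\SL$-highest weight vectors of that type is exactly $\mb{C}\cdot\sigma_0 v$. By Schur's lemma, the functional $w^{(1)}\otimes k$ restricted to $V^\vee_{j(\sigma_0)}$ then factors through the unique copy of $U(\sigma_0)^*$, which is $V^\vee(\sigma_0)$, and it is nonzero there since the pairing $U(\sigma_0)\times V^\vee(\sigma_0)$ is perfect. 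This handles the extremal and non-extremal competitors in one stroke, without needing to verify multiplicity-freeness of the full decomposition of $V^\vee_{j(\sigma_0)}$. Note also that only the ``only if'' half of your central lemma is actually used to deduce the proposition; the ``if'' direction is extra.
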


To summarize, so far we have demonstrated how a perfect ideal determines an element of $W_{P_{z_1}} \backslash W / W_{P_{x_1}}$ which can be used to classify the ideal. Next we will show that every double coset is realizable in this manner, and exhibit a generic perfect ideal for each double coset.
\begin{thm}\label{thm:generic-examples}
	As in \S\ref{bg:sch-schubert-var}, let $w = s_{z_1}s_u s_{x_1} \in W$ and let $X^w \subset G/P_{x_1}^+$ be the corresponding codimension 3 Schubert variety. Let $\sigma \in W$ be a representative of a double coset in $W_{P_{z_1}}\backslash W / W_{P_{x_1}}$ and let $S_\sigma \coloneqq \mc{O}_{G/P_{x_1}^+,\sigma v}$ be the local ring of $G/P_{x_1}^+$ at the torus-fixed point $\sigma v$; it is isomorphic to a polynomial ring localized at its ideal of variables $\mf{m}_\sigma$. Let $I_\sigma$ be the ideal of $X^w$ at that point. It is the unit ideal if $[\sigma] = [e]$ where $e \in W$ is the identity. Otherwise it is a perfect ideal of grade 3 in $S_\sigma$. It has the following properties:
	\begin{enumerate}
		\item If $w\colon \Rgen \to S_\sigma$ specializes the generic resolution to a resolution of $S_\sigma / I_\sigma$, the point of $G/P_{x_1}^+$ determined by $w^{(1)} \otimes S_\sigma/\mf{m}_\sigma$ is in $P_{z_1}^-\sigma v$, the $P_{z_1}^-$-orbit corresponding to the double coset $[\sigma]$.
		\item If $(R,\mf{m},k)$ is a local ring of equicharacteristic zero, $w\colon \Rgen \to R$ specializes the generic resolution to a resolution of $R/I$ for a perfect ideal $I$, and $w^{(1)}\otimes k$ is in the same $P_{z_1}^-$-orbit as $\sigma v$, then there exists a local homomorphism $\varphi \colon S_\sigma \to R$ such that $I = \varphi(I_\sigma)R$.
	\end{enumerate}
\end{thm}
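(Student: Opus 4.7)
\medskip
\noindent
For part (1), the plan is to combine the explicit Pl\"ucker description of $X^w$ with Theorem~\ref{thm:intermediate-res}. As recorded in \S\ref{bg:sch-schubert-var}, the Pl\"ucker coordinates vanishing on $X^w$ are exactly those in the bottom $z_1$-graded component $F_1 \subset L(\omega_{x_1})^\vee$. The identity double coset indexes the open $P_{z_1}^-$-orbit, which is the complement of $X^w$, so $v \notin X^w$ and therefore $I_e$ is the unit ideal. For $[\sigma] \neq [e]$ the torus-fixed point $\sigma v$ lies in $X^w$ and Theorem~\ref{thm:intermediate-res} provides an explicit free resolution of $S_\sigma/I_\sigma$ of length three. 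Since $X^w$ has codimension $3$ in $G/P_{x_1}^+$, the ideal $I_\sigma$ is perfect of grade $3$. To check the orbit claim in (1), pick any $w \colon \Rgen \to S_\sigma$ specializing $\Fgen$ to this resolution; by Corollary~\ref{cor:GP-homogeneous-ring-wa2} and Proposition~\ref{prop:classification-by-lowest-unit} it is enough to identify the point of $G/P_{x_1}^+$ determined by $w^{(a_2)} \otimes S_\sigma/\mf{m}_\sigma$. By the construction in Theorem~\ref{thm:intermediate-res}, the element of $V \otimes S_\sigma$ dual to $d_1^*$ is $(\sigma \exp Z)v$, which reduces to $\sigma v$ modulo $\mf{m}_\sigma$ (where $Z \mapsto 0$).

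\medskip
\noindent
For part (2), the plan is first to normalize $w$ so that $w^{(1)} \otimes k = \sigma v$ on the nose. By hypothesis there exists $\bar g \in \exp(\mf{n}_{z_1}^- \otimes k) \rtimes \bigl(\SL(F_3 \otimes k) \times \SL(F_1 \otimes k)\bigr)$ carrying $\sigma v$ to $w^{(1)} \otimes k$. Lift $\bar g$ entrywise to an element $g$ of the analogous semidirect product over $R$ (using that $R \twoheadrightarrow k$). The $\SL(F_1) \times \SL(F_3)$-factor acts on $\Rgen \otimes R$ by the natural change-of-basis action, producing a new but equivalent resolution of $R/I$; the exponential factor acts by the automorphisms of $\Rgen \otimes R$ described in Theorem~\ref{thm:parametrize} and hence changes $w$ without affecting the underlying resolution. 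Replacing $w$ by the resulting twisted map thus preserves both $I$ and all the assumptions, and arranges $w^{(1)} \otimes k = \sigma v$.

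\medskip
\noindent
With this normalization, Corollary~\ref{cor:GP-homogeneous-ring-wa2} produces a ring map $\mf{A} \to R$ under which $p_\sigma$ is sent to a unit (since the $k$-point $\sigma v$ satisfies $p_\sigma \neq 0$) and every other extremal Pl\"ucker coordinate $p_\tau$ is sent into $\mf{m}$. Passing to the degree-zero part of $\mf{A}[p_\sigma^{-1}]$ and localizing at its ideal of affine coordinates therefore yields a well-defined local homomorphism $\varphi \colon S_\sigma \to R$. The defining ideal of $X^w$ in $\mf{A}$ is linearly generated by the Pl\"ucker coordinates in $F_1 \subset L(\omega_{x_1})^\vee$, so $I_\sigma \subset S_\sigma$ is generated by the ratios $p_\tau / p_\sigma$ for $\tau$ ranging over a basis of $F_1$. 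Under $\varphi$ these ratios become the entries of the differential $d_1 = w^{(1)}|_{F_1}$ rescaled by the units $a_1$ and $\varphi(p_\sigma)$, and by construction $d_1(F_1)R = I$. Thus $\varphi(I_\sigma)R = I$, completing the argument.

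\medskip
\noindent
The main obstacle in the plan is justifying the normalization step in part (2): one must verify that the action of $\exp(\mf{n}_{z_1}^- \otimes R) \rtimes (\SL(F_3 \otimes R) \times \SL(F_1 \otimes R))$ on the pair $(\mb{F}, w)$ decomposes cleanly into a change of basis on $F_1 \oplus F_3$ together with precomposition by $\exp(\mbf{L} \cotimes R)$, and that the image in $V$ of this action matches the semidirect product action that preserves $X^w$ (noted in \S\ref{bg:sch-schubert-var}). The fact that $\SL(F_2)$ does not appear in this action, and that the actions of $\SL(F_1)$ and $\SL(F_3)$ on $V$ factor through the Levi subalgebra $\mf{sl}(F_1) \times \mf{sl}(F_3) \subset \mf{g}^{(z_1)}$, is exactly what makes the whole normalization internally consistent. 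A secondary technical point is confirming that the defining ideal of $X^w$ in $\mf{A}$, restricted to the affine patch $\sigma C^e$, really is generated by the images of $F_1 \subset L(\omega_{x_1})^\vee$ (rather than by some proper subset), which follows from \S\ref{bg:sch-schubert-var} together with the fact that Schubert varieties in $G/P$ are linearly defined.
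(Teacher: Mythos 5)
Your overall strategy follows the paper's: use Proposition~\ref{prop:classification-by-lowest-unit} to reduce (1) to exhibiting one good choice of $w$, and deduce (2) from the fact that $P_{z_1}^-$-equivalent points of $X^w$ have isomorphic local defining ideals. For (2), you normalize $w$ by a lifted group element so that $w^{(1)}\otimes k$ equals $\sigma v$ on the nose and then read off $\varphi$ directly; the paper instead keeps $w$ fixed and composes with the change-of-coordinates isomorphism $\mc{O}_{G/P,p}\cong S_\sigma$ coming from orbit equivalence. These are two presentations of the same idea, and both work, though yours requires the lifting argument and the matching of the group actions that you flag yourself; that matching is precisely what is already packaged in the proof of Proposition~\ref{prop:classification-by-lowest-unit}, so it would be cleanest to cite that rather than redo it.

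The genuine gap is in part (1). You write that you ``pick any $w$'' and then assert that ``by the construction in Theorem~\ref{thm:intermediate-res}, the element of $V\otimes S_\sigma$ dual to $d_1^*$ is $(\sigma\exp Z)v$.'' But $d_1^*$ alone does not determine an element of $V\otimes S_\sigma$: the differential $d_1^*$ is only the bottom $z_1$-graded component $F_1^*$ of the full structure map, and different choices of $w$ lift $d_1$ to different elements of $V\otimes S_\sigma$ (they differ exactly by the $\exp(\mbf{L}\cotimes S_\sigma)$-action). Theorem~\ref{thm:intermediate-res} only constructs the differentials $w^{(i)}_0$, not the higher structure maps, so it does not by itself produce the lift. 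You must actually construct a map $w\colon\Rgen\to S_\sigma$ whose $w^{(1)}$ is the full parametrization $S_\sigma\to V\otimes S_\sigma$, $1\mapsto(\sigma\exp Z)v$, and then verify it specializes $\Fgen$ to the resolution of Theorem~\ref{thm:intermediate-res}. The paper does this by starting from $w_\ssc$ (Theorem~\ref{thm:ssc}), precomposing by $\exp X$ with $X$ the generic element of $\mf{n}_{x_1}^-$, and then by the Weyl group element $\sigma^{-1}$; only with that explicit construction in hand does the claim that $w^{(1)}\otimes k = \sigma v$ become justified, after which Proposition~\ref{prop:classification-by-lowest-unit} delivers the orbit-independence you invoke.
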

\begin{proof}
	Note that $X^w$ is the union of all $P_{z_1}^-$-orbits aside from the big open orbit $P_{z_1}^- v$. So $I_\sigma$ is the unit ideal when $[\sigma]=[e]$. If $[\sigma] \neq [e]$ then $\sigma v \in X^w$ and it is well-known that Schubert varieties are Cohen-Macaulay \cite{Ramanathan85}, so $I_\sigma \subset S_\sigma$ is a perfect ideal.
	
	To prove (1), in view of Proposition~\ref{prop:classification-by-lowest-unit}, it suffices to construct one such $w$ and verify the statement. We will produce this $w$ using the action of $\mf{g}$ on $\Rgen$. We begin with the map $w_\mathrm{ssc} \colon \Rgen \to \mb{C}$ from Theorem~\ref{thm:ssc}, and observe that its restriction $w_\mathrm{ssc}^{(1)} \colon V^\vee \to \mb{C}$ is dual to a highest weight vector $v \in V$. Let $\mf{n}_{x_1}^-$ be the negative part of $\mf{g}$ in the $x_1$-grading. The open cell $C^e = B^- v \subset G/P_{x_1}^+$ is the orbit of $v$ under the exponential action of $\mf{n}_{x_1}^-$, thus it can be identified with $\Spec \Sym (\mf{n}_{x_1}^-)^*$.
	
	Next we produce a map $\Rgen \to \Sym (\mf{n}_{x_1}^-)^*$ by precomposing
	\[
	\Rgen \xto{w_\mathrm{ssc}} \mb{C} \to \Sym(\mf{n}_{x_1}^-)^*
	\]
	with the action of $\exp X$ on $\Rgen \otimes \Sym(\mf{n}_{x_1}^-)^*$, where
	\[
	X \in \mf{n}_{x_1}^- \otimes (\mf{n}_{x_1}^-)^* \subset \mf{n}_{x_1}^- \otimes \Sym(\mf{n}_{x_1}^-)^*
	\]
	is the ``generic element'' of $\mf{n}_{x_1}^-$, i.e. $X$ is adjoint to the identity on $\mf{n}_{x_1}^-$. 
	
	Finally we precompose this map by the action of $\sigma^{-1} \in W$ (or more accurately, a representative thereof as in Remark~\ref{rem:Weyl-action-lift}) on $\Rgen$. Let $w\colon \Rgen \to \Sym (\mf{n}_{x_1}^-)^*$ be the result. By construction, $w^{(1)} \colon V^* \to \Sym (\mf{n}_{x_1}^-)^*$ is none other than a parametrization of the open patch $\sigma \exp(\mf{n}_{x_1}^-)\cdot [v]$, where $\sigma v$ is the origin, corresponding to the ideal of variables in $\Sym (\mf{n}_{x_1}^-)^*$. In particular, $S_\sigma$ is the localization of this polynomial ring at its ideal of variables. The ideal $I_\sigma$ is generated by the Pl\"ucker coordinates coming from the bottom $z_1$-graded component $w^{(1)}_0$ of $w^{(1)}$.
	
	The map $w$ specializes the generic resolution to a complex over $S_\sigma$ with differentials $w^{(1)}_0$, $w^{(2)}_0$, and $w^{(3)}_0$. This complex is none other than the resolution of $S_\sigma / I_\sigma$ given in Theorem~\ref{thm:intermediate-res}.
	
	Point (2) follows readily from the discussion before Proposition~\ref{prop:classification-by-lowest-unit}. If $w \colon \Rgen \to R$ is such a map, and $p$ is the $k$-point of $G/P_{x_1}^+$ determined by $w^{(1)} \otimes k$, then the ideal of $X^w \subset G/P_{x_1}^+$ at the point $p$ specializes to the ideal $I$. The important point is that the actions of $\GL(F_i)$ and $\exp \mb{L}$ on $G/P$ preserve the Schubert variety $X^w$. Consequently, since $p$ and $\sigma v$ are in the same $P_{z_1}^-$-orbit, the local defining equations of $X^w$ at these two points are equivalent up to a change of coordinates. Thus $I_\sigma$ specializes to $I$ as well.
\end{proof}
We will not explicitly describe the ideals $I_\sigma$ in this paper; they can get very complicated and there are simply too many of them for $E_7$ and $E_8$. The paper \cite{NW-examples} outlines how to produce free resolutions of $S_{\sigma}/I_\sigma$ for $\sigma = w_0$ the longest element, and that construction is easily adapted to other $\sigma$.

However, we will at least tie back our results to the discussion in the introduction \S\ref{sec:intro} and describe the situation for Dynkin types $D_n$ and $E_6$. Before doing so, we note that even without explicitly understanding the ideals $I_\sigma$, we obtain the following classification result as a corollary of the above.
\begin{thm}\label{thm:double-coset-classification}
	Fix a Dynkin format $(1,f_1,f_2,f_3)$ and the corresponding setup as in the beginning of this section. Every element of $W_{P_{z_1}} \backslash W / W_{P_{x_1}} - [e]$ describes a non-empty family of perfect ideals of grade 3 with Betti numbers $(1,b_1,b_2,b_3)$, where $b_i \leq f_i$ for all $i$. These families are disjoint, and this is the finest possible classification that is preserved under local specialization.
\end{thm}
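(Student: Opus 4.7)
The plan is to break the statement into three assertions and verify each directly from results already established in this section. For nonemptiness of the $[\sigma]$-family, I would invoke Theorem~\ref{thm:generic-examples}: for each non-identity double coset $[\sigma]$, it produces a perfect grade 3 ideal $I_\sigma \subset S_\sigma$, and part (1) of that theorem says its double-coset invariant is exactly $[\sigma]$, so $I_\sigma$ populates the $[\sigma]$-family. The Betti number bound $b_i \leq f_i$ follows because Theorem~\ref{thm:intermediate-res} exhibits a (not necessarily minimal) free resolution of $S_\sigma/I_\sigma$ of format $(1,f_1,f_2,f_3)$, and minimal Betti numbers never exceed the ranks of any free resolution; this bound then transfers to any ideal obtained by local specialization of $I_\sigma$.

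Disjointness is essentially a restatement of Proposition~\ref{prop:classification-by-lowest-unit}, which asserts that the double coset class of $I \subset R$ is a well-defined invariant, independent of the choice of resolution $\mb{F}$ and of the map $w \colon \Rgen \to R$ specializing $\Fgen$ to $\mb{F}$. Consequently, no ideal can simultaneously belong to two distinct families.

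For the \emph{finest} claim, the plan is as follows. Suppose $\mc{C}$ is any equivalence relation on perfect grade 3 ideals (with Betti numbers bounded by $(1,f_1,f_2,f_3)$) that is preserved under local specialization, in the sense that $I$ and $I'$ are $\mc{C}$-equivalent whenever some local homomorphism $\varphi$ realizes $I' = \varphi(I)R'$. I would show $\mc{C}$ is at least as coarse as the double-coset classification. Given $I_1 \subset R_1$ and $I_2 \subset R_2$ lying in the same family $[\sigma]$, part (2) of Theorem~\ref{thm:generic-examples} supplies local homomorphisms $\varphi_i\colon S_\sigma \to R_i$ with $\varphi_i(I_\sigma)R_i = I_i$, so both $I_1$ and $I_2$ are $\mc{C}$-equivalent to $I_\sigma$, and hence to each other. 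No step here looks like a genuine obstacle: every required ingredient (existence of generic examples, well-definedness of the invariant, and the universal specialization property of $I_\sigma$) has already been proven. The only conceptual work is in recognizing that Theorem~\ref{thm:generic-examples}(2) is precisely the universal property that forces the double-coset classification to be the finest one compatible with local specialization.
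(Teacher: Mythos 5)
Your proposal is correct and follows essentially the same route as the paper: nonemptiness from Theorem~\ref{thm:generic-examples}(1), disjointness and preservation under local specialization from Proposition~\ref{prop:classification-by-lowest-unit}, and ``finest'' from the universal specialization property in Theorem~\ref{thm:generic-examples}(2). Your spelled-out version of the finest-classification argument (passing through $I_\sigma$ as a common source under $\mc{C}$-equivalence) is exactly the content the paper compresses into its last sentence.
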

\begin{proof}
	If $I \subset R$ is perfect of grade 3, the condition that $b_i \leq f_i$ is equivalent to saying that $R/I$ admits a (not necessarily minimal) resolution of format $(1,f_1,f_2,f_3)$, and so its classification comes from Proposition~\ref{prop:classification-by-lowest-unit}. The non-emptiness of each family comes from Theorem~\ref{thm:generic-examples} point (1). Disjointness comes from Proposition~\ref{prop:classification-by-lowest-unit}, as does the fact that this classification is preserved under local specialization. The existence of a generic example for each family, Theorem~\ref{thm:generic-examples} point (2), shows that it is the finest classification with this property.
\end{proof}

Now we revisit the types $D_n$ and $E_6$, starting with the two families of $D_n$ formats.
\begin{example}\label{ex:D_n-1}
	Consider the format $(1,n,n,1)$ where $n \geq 3$. The corresponding diagram is $T_{2,n-2,2} = D_n$, and the representation $V$ is a half-spinor representation. The $z_1$-graded decomposition of $V^*$ into $\mf{gl}(F_3) \times \mf{gl}(F_1)$-representations is
	\[
	V^* = (F_1) \oplus (F_3^* \otimes \bigwedge^3 F_1) \oplus (S_2 F_3^* \otimes \bigwedge^5 F_1) \oplus \cdots \oplus (S_{\lfloor{\frac{n-1}{2}}\rfloor} F_3^* \otimes \bigwedge^{2\lfloor{\frac{n-1}{2}}\rfloor+1} F_1)
	\]
	Every representation appearing is extremal; they correspond to the elements of $W_{P_{z_1}} \backslash W / W_{P_{x_1}}$. Aside from the lowest representation $F_1$, which corresponds to $[e] \in W_{P_{z_1}} \backslash W / W_{P_{x_1}}$, there are $\lfloor\frac{n-1}{2}\rfloor$ extremal representations; each one of these is a possible location for the lowest appearance of a unit in the structure map $w^{(1)}$.
	
	If $I \subset R$ is a perfect ideal of grade 3 such that $R/I$ has Betti numbers $(1,b_1,b_2,b_3)$ with $b_i \leq f_i$, then necessarily $b_3 = 1$ and $b_1=b_2 \leq n$. Gorenstein ideals of grade 3 are minimally generated by an odd number of elements \cite{Watanabe73}. Moreover, for each odd $b_1$ with $3 \leq b_1 \leq n$, the generic example of such an ideal is given by \cite{Buchsbaum-Eisenbud77}, confirming Theorem~\ref{thm:double-coset-classification} for this format.
\end{example}
\begin{example}\label{ex:D_n-2}
	Consider the format $(1,4,n,n-3)$ where $n \geq 4$. The corresponding diagram is $T_{2,2,n-2} = D_n$, and the representation $V$ is again a half-spinor representation. However, the node $z_1$ is different from the preceding example, and the decomposition of $V^*$ into $\mf{gl}(F_3) \times \mf{gl}(F_1)$-representations is
	\[
	V^* = (F_1) \oplus (F_3^* \otimes \bigwedge^3 F_1) \oplus (\bigwedge^2 F_3^* \otimes S_{2,1^3} F_1) \oplus \cdots \oplus (\bigwedge^{n-3} F_3^* \otimes S_{(a+1)^b,a^{(4-b)}} F_1).
	\]
	Here $a = \lfloor \frac{n-3}{2}\rfloor$, and $b = 2 + (-1)^n$. Every representation appearing is extremal. Again excluding the lowest representation $F_1$, we see $n-3$ possible representations for the lowest appearance of a unit in $w^{(1)}$. These correspond to the Betti numbers $(1,3,3,1)$ and $(1,4,b_2,b_2-3)$ where $5 \leq b_2 \leq n$. There is a generic perfect ideal with each of these Betti numbers; see \cite{Buchsbaum-Eisenbud77} and \cite{Brown87}.
\end{example}

We refer the reader to \cite{GW20} for a detailed discussion of higher structure maps for the preceding two formats, including methods for computing $w^{(i)}_j$ explicitly via lifting. Next we turn our attention to the $E_6$ format.

\begin{example}\label{ex:E6}
	Consider the format $(1,5,6,2)$. The corresponding diagram is $T_{2,3,3} = E_6$, and the representation $V$ is the adjoint. The $z_1$-graded decomposition of $V^*$ into $\mf{gl}(F_3) \times \mf{gl}(F_1)$-representations is
	\[
	V^* = (F_1) \oplus (F_3^* \otimes \bigwedge^3 F_1) \oplus \begin{bmatrix}
		(S_2 F_3^* \otimes \bigwedge^5 F_1)\\
		\oplus (\bigwedge^2 F_3^* \otimes S_{2,1^3} F_1)\\
		\oplus (\bigwedge^2 F_3^* \otimes \bigwedge^5 F_1)
	\end{bmatrix} \oplus (S_{2,1} F_3^* \otimes S_{2^2,1^3} F_1) \oplus (S_{2,2} F_3^* \otimes S_{2^4,1} F_1) 
	\]
	All representations, except for the $\bigwedge^2 F_3^* \otimes \bigwedge^5 F_1$ appearing in the middle, are extremal.
	
	If $R/I$ has Betti numbers $(1,b_1,b_2,b_3)$ with $b_i \leq f_i$, then the Betti numbers can be one of $(1,3,3,1)$, $(1,5,5,1)$, $(1,4,5,2)$, or $(1,5,6,2)$. The first three have already been discussed in the preceding two examples; in particular there is a generic example for each one. This leaves \emph{two} elements of $W_{P_{z_1}} \backslash W / W_{P_{x_1}}$ for the Betti numbers $(1,5,6,2)$, namely those corresponding to the last two representations $S_{2,1} F_3^* \otimes S_{2^2,1^3} F_1$ and $S_{2,2} F_3^* \otimes S_{2^4,1} F_1$.
	
	If one explicitly computes the ideals $I_\sigma$ for these two cases, the two can be distinguished by whether the multiplication on $\Tor_1(S_\sigma/I_\sigma,S_\sigma/\mf{m}_\sigma)$ is nonzero. For the former example, the multiplication is nonzero, and the ideal is described in \cite[Theorem 4.4]{Brown87}. This multiplication is zero for the latter, and that $I_\sigma$ is the ideal $J(t)$ in \cite{CLKW20}. Its relationship to $E_6$ is discussed at length in that paper as well as in \cite[\S3.2]{NW-examples}. Since the property of this Tor algebra multiplication being (non)zero is preserved under local specialization as discussed in \S\ref{sec:intro}, it can be used to distinguish between the corresponding two families of $(1,5,6,2)$ perfect ideals.
\end{example}
The double cosets $W_{P_{z_1}} \backslash W / W_{P_{x_1}}$ can be computed algorithmically, and we thank Witold Kra\'skiewicz for providing us with Python code that does so. By looking at the cardinality of this set, a simple counting argument can be used to deduce the number of families of perfect ideals with Betti numbers corresponding to types $E_7$ and $E_8$. We have already witnessed the first point of the following theorem in Example~\ref{ex:E6}.
\begin{thm}\label{thm:counts}
	In the sense of Theorem~\ref{thm:double-coset-classification}, there are:
	\begin{itemize}
		\item 2 families of perfect ideals with Betti numbers $(1,5,6,2)$,
		\item 7 families of perfect ideals with Betti numbers $(1,6,7,2)$,
		\item 11 families of perfect ideals with Betti numbers $(1,5,7,3)$,
		\item 49 families of perfect ideals with Betti numbers $(1,7,8,2)$,
		\item 90 families of perfect ideals with Betti numbers $(1,5,8,4)$.
	\end{itemize}
\end{thm}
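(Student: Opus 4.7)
The plan is to reduce the theorem to a finite combinatorial computation via Theorem~\ref{thm:double-coset-classification}. That theorem bijects families of perfect ideals whose Betti numbers are componentwise $\le (1,f_1,f_2,f_3)$ with non-identity cosets in $W_{P_{z_1}}\backslash W/W_{P_{x_1}}$, where $W$ is the Weyl group of the associated $E_n$ diagram. So the task is to compute $|W_{P_{z_1}}\backslash W/W_{P_{x_1}}|$ for each of the five formats in the statement and then subtract off the families whose Betti numbers are strictly smaller than the ambient format.

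The first step is to enumerate double cosets for each of the diagrams $T_{2,3,3}$, $T_{2,4,3}$, $T_{2,3,4}$, $T_{2,5,3}$, $T_{2,3,5}$. The parabolics $W_{P_{x_1}}$, $W_{P_{z_1}}$ are generated by the simple reflections omitting $s_{x_1}$ and $s_{z_1}$ respectively. Since the quotient $W/W_{P_{x_1}}$ indexes Schubert cells of $G/P_{x_1}^+$ and is of moderate size in each case, one enumerates the minimal length representatives $W^{P_{x_1}}$ and groups them into orbits under left multiplication by $W_{P_{z_1}}$. This is precisely the finite computation carried out by Kra\'skiewicz's Python code credited before the statement.

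The second step is to identify, for each non-identity coset $[\sigma]$, the exact Betti numbers of the generic ideal $I_\sigma$ from Theorem~\ref{thm:generic-examples}. By Proposition~\ref{prop:classification-by-lowest-unit}, $[\sigma]$ is determined by the lowest extremal $\mf{gl}(F_3)\times\mf{gl}(F_1)$-subrepresentation of the $z_1$-graded $L(\omega_{x_1})^\vee$ that $w^{(1)}$ hits modulo $\mf{m}_\sigma$; reading off the multiplicities of $F_1$ and $F_3$ in that extremal piece pins down the smallest sub-format $(1,b_1,b_2,b_3)\le (1,f_1,f_2,f_3)$ containing it, and those are the Betti numbers of $I_\sigma$. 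Equivalently, $[\sigma]$ is tagged with the smallest Dynkin subdiagram $T'\subset T_{2,d+1,t+1}$ containing $x_1$, $z_1$, and every vertex appearing in a reduced word for the minimal representative.

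Finally I would count the cosets whose Betti numbers equal the ambient format by taking $|W_{P_{z_1}}\backslash W/W_{P_{x_1}}|-1$ and subtracting the counts already known for strict sub-formats, which are either Gorenstein (one family by Buchsbaum-Eisenbud, Example~\ref{ex:D_n-1}), of the $(1,4,n,n-3)$ type (one family, Example~\ref{ex:D_n-2}), or the $E_6$ and $E_7$ counts previously established by induction up the list (starting from the $2$ families for $(1,5,6,2)$ computed in Example~\ref{ex:E6}). The main technical difficulty is the sheer size of $W$ in the $E_8$ cases, but since $W/W_{P_{x_1}}$ remains of manageable size and the orbit computations are elementary, the obstacle is entirely one of careful bookkeeping rather than conceptual subtlety; the counts $2$, $7$, $11$, $49$, $90$ should fall out by direct verification of the tabulated data.
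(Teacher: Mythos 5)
Your proposal is essentially correct, but it takes a more laborious route than the paper and includes an unnecessary and unjustified step. The paper never determines the exact Betti numbers of each individual $I_\sigma$. Instead it observes that, by Theorem~\ref{thm:double-coset-classification}, the quantity $\#(d,t)-1 \coloneqq |W_{P_{z_1}}\backslash W/W_{P_{x_1}}|-1$ counts families of perfect ideals with deviation $\le d$ and type $\le t$, and then applies a clean two-dimensional inclusion-exclusion
\[
\#(d,t) - \#(d-1,t) - \#(d,t-1) + \#(d-1,t-1)
\]
to extract the count for Betti numbers \emph{exactly} $(1,3+d,2+d+t,t)$. This requires nothing beyond the tabulated double-coset cardinalities for the relevant $T_{2,d'+1,t'+1}$ (including the trivially known $D_n$ cases), and in particular no per-coset analysis. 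Your sequential subtraction in step 3 is mathematically equivalent (it is the M\"obius-inversion form of the same inclusion-exclusion) and would yield the same five numbers, so the conclusion is sound.

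Your step 2, however, is both unnecessary and not established by what you cite. You assert that $[\sigma]$ is tagged with the smallest Dynkin subdiagram $T'$ containing $x_1$, $z_1$, and every vertex in a reduced word for the minimal representative, and that this determines the Betti numbers of $I_\sigma$. Proposition~\ref{prop:classification-by-lowest-unit} only tells you that the $P_{z_1}^-$-orbit is a well-defined invariant of the ideal; it does not prove a support-of-reduced-word characterization of the Betti numbers. That claim may well be true (and Examples~\ref{ex:D_n-1}, \ref{ex:D_n-2} are consistent with it), but it would need a separate argument, and since the inclusion-exclusion sidesteps it entirely, you should drop this step rather than leave a gap in it.
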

\begin{proof}
	In Table~\ref{table:num-d-t}, we have summarized the cardinality of $W_{P_{z_1}} \backslash W / W_{P_{x_1}}$ for various Dynkin formats $(1,3+d,2+d+t,t)$; call this quantity $\#(d,t)$.
	
	\begin{table}[htbp]
		\caption{Cardinality $\#(d,t)$ of $W_{P_{z_1}} \backslash W / W_{P_{x_1}}$ for $T_{2,d+1,t+1}$ associated to Dynkin formats $(1,3+d,2+d+t,t)$}
		\centering
		\begin{tabular}{c|cccccc}
			\toprule
			$\#(d,t)$ & $t=1$ & $t=2$ & $t=3$ & $t=4$ & $t=5$ & $\cdots$ \\
			\hline
			$d=0$ & 2 & 2 & 2 & 2 & 2 & $\cdots$\\
			$d=1$ & 2 & 3 & 4 & 5 & 6 & $\cdots$\\
			$d=2$ & 3 & 6 & 18 & 109 & -- \\
			$d=3$ & 3 & 13 & -- & -- & -- \\
			$d=4$ & 4 & 63 & -- & -- & -- \\
			$d=5$ & 4 & -- & -- & -- & --\\
			$\vdots$ & \vdots \\
			\bottomrule
		\end{tabular}
		\label{table:num-d-t}
	\end{table}
	
	Since $\#(d,t)-1$ records the number of families of perfect ideals with Betti numbers \emph{at most} $(1,3+d,2+d+t,t)$, we have that the number of families with Betti numbers \emph{exactly} the given format is
	\[
	\#(d,t) - \#(d-1,t) - \#(d,t-1) + \#(d-1,t-1).
	\]
	The counts in the theorem follow easily.
\end{proof}

We close this section by mentioning a non-local form of Theorem~\ref{thm:generic-examples}, to address the ``genericity conjecture'' that has appeared in previous work on the subject, e.g. \cite[Questions 4.9]{WeymanICERM}.

In \cite{NW-examples}, the focus was on $\sigma = w_0 \in W$, the longest element. The reason for focusing on $w_0$ was a matter of perspective, rather than any technical limitation. Rather than looking at the local defining equations of $X^w$ at any particular point, both that paper and its precursor \cite{SW21} examined the ideal of $Y^w \coloneqq X^w \cap w_0 C^e$ inside of $w_0 C^e$, where $C^e = B^- v$ is the big open Schubert cell and $w_0 C^e$ is its opposite. This yields an ideal $J_{w_0} \subset S \coloneqq \Sym(\mf{n}_{x_1}^-)^*$. The ``origin'' $w_0 v$ of this open cell $w_0 C^e$ is a point in the lowest-dimensional $P_{z_1}^-$-orbit of $G/P_{x_1}^+$. So by localizing at the ideal of variables, we recover the ideal $I_{w_0}\subset S_{w_0}$.

Loosely speaking, the genericity conjecture says that a general choice of higher structure maps for a perfect ideal with Dynkin Betti numbers will have a unit in the top coordinate of $w^{(1)}$. This follows easily from Theorem~\ref{thm:surjectivity}, but it carries less information than Theorem~\ref{thm:generic-examples}. We present and prove it mainly for the sake of closing the loop in this circle of ideas.

\begin{thm}\label{thm:non-local-gen}
	Fix a Dynkin format $(1,f_1,f_2,f_3)$. If $\mb{F}$ is a resolution of $R/I$ with the given format, where $I\subset R$ is a perfect ideal of grade 3 in a local ring of equicharacteristic zero, then there is a map $\varphi\colon S \to R$ such that $\varphi(J_{w_0})R = I$.
\end{thm}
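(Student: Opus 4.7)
The plan is to arrange, by modifying $w$, that $w^{(1)}(p_{w_0}) \in R$ is a unit, where $p_{w_0} \in V^\vee = L(\omega_{x_1})^\vee$ is the highest-weight vector (i.e.\ the extremal Pl\"ucker coordinate indexed by the longest element $w_0 \in W$). Once this holds, the ring homomorphism $\mf{A} \to R$ of Corollary~\ref{cor:GP-homogeneous-ring-wa2} (using $r_1=1$ to identify $w^{(a_2)}$ with $w^{(1)}$ up to the unit $a_1$) descends through dehomogenization by $p_{w_0}$ to yield $\varphi \colon S = \mf{A}[p_{w_0}^{-1}]_0 \to R$. By \S\ref{bg:sch-schubert-var}, $J_{w_0}$ is generated by the dehomogenized Pl\"ucker coordinates $p_\tau / p_{w_0}$ with $p_\tau$ ranging over the bottom $z_1$-graded component $F_1 \subset V^\vee$, and under $\varphi$ each $p_\tau$ pulls back to the corresponding entry of $d_1 = w^{(1)}_0$; since $w^{(1)}(p_{w_0})$ is a unit, one has $\varphi(J_{w_0})R = d_1(F_1 \otimes R) = I$, as required.

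To arrange this I would proceed in two steps. First, I would show that every $P_{z_1}^-$-orbit on $G/P_{x_1}^+$ meets $w_0 C^e = \{p_{w_0} \neq 0\}$: the vanishing criterion in \S\ref{subsec:cells-P-orbits} says $p_{w_0}$ vanishes identically on the Schubert cell $C^\sigma$ iff $w_0 \not\geq \sigma$ in Bruhat order, which never holds since $w_0$ is maximal, so $p_{w_0}$ is not identically zero on any Schubert cell and hence not on any $P_{z_1}^-$-orbit (which is a union of Schubert cells). Second, starting from any $w\colon \Rgen \to R$ specializing to a resolution $\mb{F}$ of $R/I$, I would modify $w$ using the action of $\prod \GL(F_i \otimes R) \times (\mbf{L} \cotimes R)$ on the set of such pairs $(\mb{F}', w')$: the first factor acts transitively on resolutions of $R/I$ of the given format by change of basis, and by Theorem~\ref{thm:parametrize} the second acts transitively on $w'$'s specializing $\Fgen$ to a fixed $\mb{F}'$. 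The induced action on $[w'^{(1)} \otimes k] \in G/P_{x_1}^+$ has the same orbits as $P_{z_1}^-$ by Remark~\ref{rem:P-orbit}; in particular, one can move $[w^{(1)}\otimes k]$ to any $k$-point in its $P_{z_1}^-$-orbit $\mc{O}$, and $\mc{O} \cap w_0 C^e$ is nonempty by the first step. The required $k$-level group element lifts to $R$ since $R \to k$ is surjective and both $\GL$ and $\mbf{L}$ admit evident lifts.

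The main technical obstacle is the orbit-identification claim at the end of the previous paragraph: that the effective action on $\mb{P}(V)$ of $\prod \GL(F_i) \times \exp\mbf{L}$ coincides, on orbits, with the action of $P_{z_1}^-$. The complication is that $\GL(F_0)$ and $\GL(F_2)$ are not subgroups of $G$ (see Remark~\ref{rem:gl-vs-sl}): $\GL(F_0)$ acts on $V^\vee$ by overall scalars (hence trivially on $\mb{P}(V)$), while $\GL(F_2)$ acts by determinant characters differing across $z_1$-graded components of $V^\vee$, and one needs to check that this reduces the effective action to the semidirect product $(\SL(F_1)\times \SL(F_3)) \ltimes \exp\mf{n}_{z_1}^-$ whose orbits agree with those of $P_{z_1}^-$ per Remark~\ref{rem:P-orbit}.
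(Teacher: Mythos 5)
Your first and last paragraphs match the paper's strategy: reduce to arranging that some modification of $w^{(1)}$ sends the highest-weight Pl\"ucker coordinate $p_{w_0}$ to a unit, then use that the relevant group action fixes the bottom $z_1$-graded piece $F_1 \subset V^\vee$ so the pullback of $J_{w_0}$ is still $I$. Where the two arguments part ways is in the middle: you propose to transport $[w^{(1)}\otimes k]$ to a point of its $P_{z_1}^-$-orbit lying in $w_0 C^e$ by acting with $\prod\GL(F_i\otimes R)\times\exp(\mbf{L}\cotimes R)$, which requires (i) that every $P_{z_1}^-$-orbit meets $w_0 C^e$ (your Bruhat-order argument, which is fine) and (ii) that the available modifications of $w$ realize the full $P_{z_1}^-$-orbit at the $k$-level. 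You flag (ii) as the technical obstacle, and it is a genuine gap: $\GL(F_0)$ and $\GL(F_2)$ act on $V^\vee$ via characters that are not the restriction of a $G$-action, and the torus parts of $\GL(F_1)\times\GL(F_3)$ do not obviously match the torus of $P_{z_1}^-$, so identifying the orbits of your group with those of $P_{z_1}^-$ would require a careful separate argument (of the kind underlying Proposition~\ref{prop:classification-by-lowest-unit} and Theorem~\ref{thm:generic-examples}).

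The paper's proof avoids orbit considerations altogether. It directly uses: since $w^{(1)}\otimes k\neq 0$ (Theorem~\ref{thm:surjectivity}), its kernel is a hyperplane $H\subset V^\vee\otimes k$; since the linear span of the $B^-$-orbit of the highest weight vector $\lambda\in V^\vee$ is all of $V^\vee$, a \emph{general} $g\in B^-$ takes $\lambda$ off $H$. Precomposing $w^{(1)}$ by such a $g$ (a plain linear operation on $V^\vee\otimes R$ — no need to realize it as a modification of $w$ or to lift group elements from $k$ to $R$) preserves Pl\"ucker relations and preserves the ideal $I$, because $B^-$ stabilizes $F_1$. This is shorter and entirely sidesteps the orbit-matching you flag. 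Your argument could likely be completed, but it is more machinery than the statement requires; in fact, even within your framework, a general position argument using $\exp(\mf{n}^-\cap\mf{g}^{(z_1)})\subset\SL(F_1)\times\SL(F_3)$ together with $\exp\mbf{L}$ would do the job without needing to identify orbits with $P_{z_1}^-$ at all, which is essentially the paper's $B^-$ trick.
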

Note that the polynomial ring $\Sym(\mf{n}_{x_1}^-)^*$ is \emph{not} localized, as we are considering the whole affine patch $w_0 C^e$. Since $J_{w_0}$ and $I$ are both perfect ideals of grade 3, the resolution for $S/J_{w_0}$ constructed in \cite{NW-examples} specializes to one for $R/I$ via $\varphi$. This is the form in which the statement appears in \cite[Conjecture 2.6]{NW-examples}. 
\begin{proof}
	Pick a map $w\colon \Rgen \to R$ specializing the generic resolution to $\mb{F}$, and let $w^{(1)}$ denote its restriction to $W(d_1) = V^\vee$ as usual.
	
	Let $\lambda$ be a highest weight vector of $V^\vee$, i.e. dual to $w_0 \cdot v \in V$. Note that $\ker(w^{(1)} \otimes k)$ is a hyperplane in $V^\vee$ since $w^{(1)} \otimes k \neq 0$. The linear span of the $B^-$-orbit of $\lambda$ is the entirety of $V^\vee$, thus a general element $g\in B^-$ has the property that $g\cdot \lambda \notin \ker(w^{(1)} \otimes k)$.
	
	Precomposing $w^{(1)}$ by such a $g$ does not change the image of $w^{(1)}_0$ because the bottom $z_1$-graded component $F_1\subset V^\vee$ is preserved under the action of $B^-$. Hence $w^{(1)}g$ determines a map $\Spec R \to G/P_{x_1}$ landing in $w_0 C^e$, and the corresponding map $S \to R$ specializes $J_{w_0}$ to $I$ as desired.
\end{proof}

\section{Next steps}\label{sec:beyond-ADE}

The theory of $\Rgen$ has been developed for arbitrary resolution formats $(f_0,f_1,f_2,f_3)$ of length 3, not just those in the Dynkin range. Actually, the construction of $A_i$ in the proof of Theorem~\ref{thm:surjectivity} works fine without the Dynkin hypothesis, the caveat being that they must be thought of as maps
\[
A_i \colon L(\omega_i) \otimes L(\omega_i)^\vee \to R.
\]
Note that the graded dual $L(\omega_i)^\vee$ is strictly smaller than the ordinary dual $L(\omega_i)^*$ if the representation is infinite-dimensional. In particular, such a map $A_i$ cannot be interpreted as an endomorphism of $L(\omega_i)^\vee$, and thus the crucial final step of the proof, in which we argue that the $A_i$ are invertible, does not make sense.

Phrasing it in this manner, one might be led to think that this is a technical shortcoming of the proof. However, the theorem statement itself is not even true without the Dynkin hypothesis, as the following example shows.
\begin{example}\label{ex:nonlicci-perfect}
	Let $I = (x,y,z)^2 \subset R= \mb{C}[x,y,z]$. If we take $R$ with the standard grading, then $R/I$ admits a graded minimal free resolution
	\[
	\mb{F}\colon 0 \to R(-4)^3 \to R(-3)^8 \to R(-2)^6 \to R.
	\]
	The format $(1,6,8,3)$ is not Dynkin, as it is associated to the affine type $\wt{E}_7$. The quotient $R/I$ is obviously Cohen-Macaulay, being zero-dimensional, so $\mb{F}^*$ is acyclic. Thus the other assumption of Theorem~\ref{thm:surjectivity} is still satisfied.
	
	However, we know that there exists $\Rgen \to R$ resulting in $w^{(1)}$ being homogeneous of degree zero. Analogously to Example~\ref{ex:non-perfect},
	\[
	R \otimes L(\omega_{x_1})^\vee = F_1 \oplus F_3^* \otimes \bigwedge^3 F_1 \oplus \cdots
	\]
	has all generators in degree 2, since $F_1$ is generated in degree 2 and $F_3^* \otimes \bigwedge^2 F_1$ is generated in degree 0. Thus the entries of $w^{(1)}$ are all quadrics by degree considerations and $w^{(1)}$ is not surjective. Indeed, its image is equal to $I$.
\end{example}

In a future paper, we will elucidate a connection between higher structure maps coming from $\Rgen$ and the theory of linkage. From that perspective, the surjectivity of $w^{(1)}$ is equivalent to the ideal $I$ being in the linkage class of a complete intersection (licci), and Theorem~\ref{thm:surjectivity} implies the following:
\begin{thm}\label{thm:licciconj}
	Let $I$ be a grade 3 perfect ideal in a local Noetherian ring $R$ of equicharacteristic zero. Let $d$ denote the deviation of $I$ and $t$ the minimal number of generators of $\operatorname{Ext}^3(R/I,R)$. If
	\begin{itemize}
		\item $d \leq 4$ and $t \leq 2$, or
		\item $d \leq 2$ and $t \leq 4$,
	\end{itemize}
	then $I$ is in the linkage class of a complete intersection.
\end{thm}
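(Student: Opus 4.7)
The plan is to deduce Theorem~\ref{thm:licciconj} from Theorem~\ref{thm:surjectivity} together with an equivalence, to be established in forthcoming work, between surjectivity of $w^{(1)}$ and the licci property. First, I would check that the hypotheses on $d$ and $t$ force the Betti sequence $(1,d+3,d+t+2,t)$ of $R/I$ to be a Dynkin format from Table~\ref{table:Dynkin-types}. The case $t=1$ (Gorenstein) and the case $d \leq 1$ (complete intersection or almost complete intersection) give the $D_n$ formats, and are already handled classically by the Buchsbaum--Eisenbud structure theorem and linkage. The genuinely new cases are the $E_6, E_7, E_8$ formats $(1,5,6,2)$, $(1,6,7,2)$, $(1,7,8,2)$, $(1,5,7,3)$, $(1,5,8,4)$, exactly exhausting the pairs $(d,t) \in \{(2,2),(3,2),(4,2),(2,3),(2,4)\}$ still allowed by the hypothesis.

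Next, I would set up Theorem~\ref{thm:surjectivity}. Let $\mb{F}$ be the minimal free resolution of $R/I$. Since $I$ is perfect of grade 3, the dual complex $\mb{F}^*$ (shifted by three) resolves the canonical module $\omega_{R/I} = \operatorname{Ext}^3_R(R/I, R)$, so $\mb{F}^*$ is acyclic. Fixing any map $w \colon \Rgen \to R$ specializing $\Fgen$ to $\mb{F}$, Theorem~\ref{thm:surjectivity} then yields that the structure map $w^{(1)}$ is surjective, equivalently nonzero modulo $\mf{m}$.

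The final and substantive step is to convert surjectivity of $w^{(1)}$ into the licci conclusion. Building on Proposition~\ref{prop:classification-by-lowest-unit}, the $P_{z_1}^-$-orbit on $G/P_{x_1}^+$ determined by $w^{(1)} \otimes k$ is a well-defined local invariant $[\sigma(I)] \in W_{P_{z_1}} \backslash W / W_{P_{x_1}}$ of $I$, and $[\sigma(I)] = [e]$ precisely when $I$ is a complete intersection. The strategy is to show that whenever $[\sigma(I)] \neq [e]$, the component of $w^{(1)}$ that is responsible for producing $[\sigma(I)]$ furnishes a regular sequence $\alpha_1,\alpha_2,\alpha_3 \in I$ such that the linked ideal $J = (\alpha_1,\alpha_2,\alpha_3) : I$ satisfies $[\sigma(J)] \prec [\sigma(I)]$ in a suitable well-founded partial order on the double-coset set. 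Iterating then terminates at $[e]$ after finitely many steps, so $I$ is linked to a complete intersection in finitely many links.

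The main obstacle is establishing this strict descent: one needs a clean dictionary between the linkage operation on the ideal side and the induced motion on $W_{P_{z_1}} \backslash W / W_{P_{x_1}}$ on the Schubert-geometric side. This is the content of the forthcoming work alluded to in the excerpt. I expect the verification to proceed by working with the generic examples $I_\sigma$ produced by Theorem~\ref{thm:generic-examples} one double coset at a time: for each non-identity $[\sigma]$ appearing in the enumeration of Theorem~\ref{thm:counts}, one exhibits an explicit link of $I_\sigma \subset S_\sigma$ whose classifying double coset is strictly lower. Because every specialization $I\subset R$ with classifying coset $[\sigma]$ factors through $I_\sigma$ via Theorem~\ref{thm:generic-examples}(2), the generic computation propagates to the general case, reducing the entire theorem to a finite check on $G/P_{x_1}$.
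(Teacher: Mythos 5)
Your proposal takes essentially the same approach as the paper's: both deduce the theorem from Theorem~\ref{thm:surjectivity} together with a ``surjectivity of $w^{(1)}$ $\Longleftrightarrow$ licci'' equivalence that is explicitly deferred to a forthcoming paper; the present paper states this theorem with only a one-line justification pointing to that future work. Your additional sketch of a descent argument on $W_{P_{z_1}}\backslash W/W_{P_{x_1}}$ reducing the link step to the generic examples $I_\sigma$ is a reasonable guess at how that forthcoming work might go, but note a small slip: by Theorem~\ref{thm:generic-examples} the double coset $[e]$ corresponds to $I_\sigma$ being the \emph{unit} ideal, not a complete intersection. The complete-intersection case is the next coset up, $[s_{x_1}]$ (where the lowest nonvanishing component of $w^{(1)}$ mod $\mf{m}$ is $w^{(1)}_1 \colon F_3^*\otimes\bigwedge^3 F_1 \to R$, as noted at the end of \S\ref{sec:intro}); your descent should terminate there rather than at $[e]$.
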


On the other hand, the ideal $(x,y,z)^2$ from the preceding example is \emph{not} licci. Moreover, in \cite{CVW19} it is shown that for all $(1,f_1,f_2,f_3)$ outside the Dynkin range, there exists a perfect ideal with those Betti numbers that is not licci. Thus the Dynkin condition in Theorem~\ref{thm:surjectivity} is essential.

Beyond the Dynkin range, it remains unclear how to use representation theory to characterize non-licci perfect ideals. A concrete starting point would be to see whether one can produce some well-known examples of non-licci perfect ideals with Betti numbers $\underline{f} = (1,6,8,3)$ directly from the representation theory of $E_7^{(1)}$, which is the affine Kac-Moody Lie algebra involved in the construction of $\Rgen(\underline{f})$. Two particularly simple examples of such perfect ideals are
\begin{itemize}
	\item the ideal of $2\times 2$ minors of a generic $2\times 4$ matrix (the ideal of $\mb{P}^1 \times \mb{P}^3 \subset \mb{P}^7$ in the Segre embedding), and
	\item the ideal of $2\times 2$ minors of a generic $3\times 3$ symmetric matrix (the ideal of $\mb{P}^2 \subset \mb{P}^5$ in the Veronese embedding).
\end{itemize}

Next we discuss a different avenue for future work. The behavior of perfect ideals of codimension $c$ often parallels the behavior of Gorenstein ideals of codimension $c+1$. Indeed, there are various methods of producing the latter given an example of the former. From this perspective, after studying perfect ideals of codimension 3, a natural next step is to examine Gorenstein ideals of codimension 4.

For a such an ideal $I$ generated by $n$ elements, and $\mb{F}$ a self-dual resolution of $R/I$, there is a ring $A(n)_\infty$ and a map $A(n)_\infty \to R$ which can be viewed as a collection of ``higher structure maps'' for $\mb{F}$. This construction is described in \cite{WeymanGor4}. The Kac-Moody Lie algebra associated to the diagram $T_{3,n,2}$ acts on $A(n)_\infty$. In particular, when $6 \leq n \leq 8$, this Lie algebra is $E_n$. In analogy with Theorem~\ref{thm:surjectivity}, we conjecture that the higher structure maps, suitably interpreted, are surjective for $n \leq 8$. Assuming this, one can develop a similar classification as we have done in \S\ref{sec:classify}.

However, for the proof of Theorem~\ref{thm:surjectivity}, it was necessary to have \emph{two} rings $\Rgen(\underline{f})$ and $\Rgen(\underline{f}')$ to assemble the matrices $A_i$. In a way, it would appear that the ring $A(n)_\infty$ only provides half of the picture needed to carry out this same program. For example, Kustin's ``higher order products'' are not visible in $A(n)_\infty$, and we expect to find them coming from another ring. We do not yet have a systematic construction of this second set of structure maps in general, but for $n = 6$, they can be explicitly computed.

\appendix
\section{Some lemmas pertaining to $\Rgen$}\label{sec:Rgen-pfs}
\begin{thm}\label{thm:parametrize}
	Let $\mb{F}$ be a resolution of length three over $R$ and let $\Rgen$ be the generic ring for the associated format. Fix a $\mb{C}$-algebra homomorphism $w\colon \Rgen \to R$ specializing $\mb{F}^\mathrm{gen}$ to $\mb{F}$. Then $w$ determines a bijection
	\[
	\mbf{L}\cotimes R \coloneqq \prod_{i > 0} (\mb{L}_i \otimes R) \simeq \{\text{$\mb{C}$-algebra homomorphisms $w'\colon \Rgen \to R$ specializing $\mb{F}^\mathrm{gen}$ to $\mb{F}$}\}.
	\]
	Note that a $\mb{C}$-algebra homomorphism $\Rgen \to R$ can be viewed as an $R$-algebra homomorphism $\Rgen \otimes R\to R$. The correspondence above identifies $X \in \mbf{L} \cotimes R$ with the map $w\exp X$ obtained by precomposing $w$ with the action of $\exp X$ on $\Rgen\otimes R$. 
\end{thm}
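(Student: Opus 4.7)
My plan is to invoke Lemma~\ref{lem:GFR-p-determines-w} and argue by induction on the tower $\Rgen = \lim R_m$, constructing the inverse of $X \mapsto w\exp(X)$ by matching lifts level-by-level. Well-definedness is quick: given $X \in \mbf{L}\cotimes R$, the map $w\exp(X)\colon \Rgen\otimes R \to R$ still specializes $\mb{F}^{\mathrm{gen}}$ to $\mb{F}$, because each $u \in \mb{L}_n$ acts by an $R_{n-1}$-linear (hence $R_a$-linear) derivation, so $\exp(X)$ fixes $R_a\otimes R$ pointwise, and the differentials of $\mb{F}^{\mathrm{gen}}$ already live in $R_a$.

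For existence and uniqueness of $X$, I would fix another specialization $w'$, corresponding by Lemma~\ref{lem:GFR-p-determines-w} to lifts $\{p_m'\}$, with $\{p_m\}$ the ones for $w$, and build $X = \sum_{m \geq 1} u_m$ with $u_m \in \mb{L}_m\otimes R$ inductively. At stage $m$, I assume that $w\exp(X_{<m})$ agrees with $w'$ on $R_{m-1}$, where $X_{<m} = u_1 + \cdots + u_{m-1}$. Then $w\exp(X_{<m})$ and $w'$ both provide lifts of the cycle $q_m$ in diagram~\eqref{eq:p-lifting} to $\mc{K}\otimes R$, and their difference identifies a unique element $u_m \in \mb{L}_m \otimes R$. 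The formula $D_{u_m}(p_m^*) = u_m \circ \bigwedge^{r_3}d_3^*$ from \S\ref{sec:Rgen:construction} shows that $\exp(u_m)$ shifts the lift $p_m$ by precisely the needed amount, while the $R_{m-1}$-linearity of $D_{u_m}$ guarantees that $\exp(u_m)$ acts trivially on $R_{m-1}$ and, dually, that any $u_i$ with $i>m$ acts trivially on $R_m$. Combining these observations, $w\exp(X_{<m+1})$ agrees with $w'$ on $R_m$. Passing to the limit yields the desired $X$, and the uniqueness of $u_m$ at each stage (as a well-defined difference of two lifts in $\mb{L}_m \otimes R$) gives uniqueness of $X$.

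The hard part will be a careful verification that the inductive modification truly behaves as claimed: one needs to check that $\exp(X_{<m+1})$ and $\exp(X_{<m})\exp(u_m)$ agree when applied to $p_m^*$ and pushed through $w$, and in particular that the Baker-Campbell-Hausdorff corrections between these two expressions, which live in strictly higher Lie algebra degrees (\S\ref{bg:lie-exp}), do not interfere. More fundamentally, the claim that the non-uniqueness of the lift $p_m$ after base change to $\Rgen \otimes R$ is precisely parametrized by $\mb{L}_m \otimes R$ requires the construction of $\Rgen$ from \cite{Weyman89, Weyman18}: generators for $p_m$ were adjoined so as to kill exactly the relations forced on a split exact complex, and extending that freeness to arbitrary specializations $w$ relies on the acyclicity properties of $\Rgen$ established in those papers.
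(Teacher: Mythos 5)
Your proposal is correct and takes essentially the same route as the paper's proof: reduce via Lemma~\ref{lem:GFR-p-determines-w} to comparing the level-$m$ lifts $p_m$, then determine $u_m \in \mb{L}_m \otimes R$ inductively by observing that $u_m$ enters the transformed $p_m$ only through the uniquely-solvable term $(\bigwedge^{r_3}d_3)u_m^*$, while all other terms involve already-determined $u_k$ for $k<m$.

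The one organizational difference worth noting: where you factor $\exp(X_{<m+1})$ as $\exp(X_{<m})\exp(u_m)$ and then invoke Baker--Campbell--Hausdorff to argue the correction lives in degree $\geq m+1$ and hence cannot affect $R_m$, the paper never decomposes the exponential. It instead observes directly that $\exp X$ and $\exp X_n$ have the same effect on $p_k$ for all $k \leq n$, because the difference involves only $u_i$ with $i>n$, and such $u_i$ act by $R_{i-1}$-linear (hence $R_n$-linear) derivations. Working with the full $\exp X$ throughout sidesteps BCH entirely. Your version is not wrong---the BCH tail does indeed sit in high degree as you say---but it introduces an auxiliary verification that the paper avoids. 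Your identification of the exactness properties of diagram~\eqref{eq:p-lifting} (i.e.\ that the lifts of $q_m$ are a torsor over $\mb{L}_m \otimes R$) as the underlying foundational fact is accurate; in the paper's exposition this is already built into the discussion surrounding Lemma~\ref{lem:GFR-p-determines-w} and follows from $\grade I_{r_3}(d_3) \geq 2$ rather than being redone in this proof.
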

\begin{proof}
	By Lemma~\ref{lem:GFR-p-determines-w}, the homomorphism $w\colon \Rgen \otimes R \to R$ is completely determined by the choice of the structure maps $p_i$. For $X \in \mbf{L} \cotimes R$, let us write $X = \sum_{i > 0} u_i$ where $u_i \in \mb{L}_i \otimes R$, and let $X_n = \sum_{i=1}^n u_i$ denote the partial sums.
	
	Precomposing $w$ by $\exp X$ or $\exp X_n$ has the same effect on the structure maps $p_k$ for $k \leq n$. Acting by $\exp X$ on $p_1$, we get
	\[
	p_1 + (\bigwedge^{r_3} d_3)u_1^*.
	\]
	Here $u_1^*$ means the dual of $R \xto{u_1} \mb{L}_1 \otimes R$. All possible choices of the structure map $p_1$ are obtained by lifting a particular map $q_1$ in the diagram \eqref{eq:p-lifting}, so it follows that choices of $u_1 \in \mb{L}_1 \otimes R$ correspond to choices for the structure map $p_1$.
	
	Once $X_{n-1}$ has been computed, $u_n \in \mb{L}_n \otimes R$ can be similarly determined by comparing $p_n$ with $p_n'$. Acting by $\exp X$ on $p_n$ gives
	\[
	(p_n + p_{n-1}[u_1,-]^* + \cdots) + (\bigwedge^{r_3} d_3)u_n^*.
	\]
	The first part consists of terms involving $u_k$ for $k<n$, which have already been determined. Once again, \eqref{eq:p-lifting} shows that there is a unique choice of $u_n \in \mb{L}_n \otimes R$ that makes the whole expression equal to $p_n'$.
	
	Proceeding inductively in this fashion, we construct $X \in \mb{L} \otimes R$ with the desired property, and the uniqueness at each step is evident as well.
\end{proof}

We next establish some straightforward representation theory lemmas, which will be used to highlight the importance of a particular subspace in $W(a_3) \subset \Rgen$.
\begin{lem}
	Let $b \in L(\omega_{z_1})^\vee$ be a lowest weight vector. The subspace
	\[
	V \coloneqq \{Xb : X \in \mf{g}\} \subseteq L(\omega_{z_1})^\vee
	\]
	is a representation of $\mf{n}_{z_1}^-$, and thus a $\mbf{L}$-representation.
\end{lem}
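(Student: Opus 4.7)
The plan is to reduce the lemma to the single observation that $\mf{n}_{z_1}^- \cdot b = 0$, after which stability of $V = \mf{g} \cdot b$ under $\mf{n}_{z_1}^-$ becomes a one-line commutator computation.

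First I would verify the containment $\mf{n}_{z_1}^- \subseteq \mf{n}^-$. A root $\alpha$ of $\mf{g}$ has the property that all of its coefficients in the basis of simple roots share a common sign (this is standard for Kac-Moody root systems, and is the reason the definitions of $\Delta^{\pm}$ in \S\ref{bg:lie-grading1} give a disjoint union). Consequently $\alpha <_{z_1} 0$, meaning the $\alpha_{z_1}$-coefficient is negative, forces $\alpha < 0$. Since $b$ is a lowest weight vector of $L(\omega_{z_1})^\vee$, it is annihilated by $\mf{n}^-$, hence also by the subalgebra $\mf{n}_{z_1}^-$.

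Granted this, the $\mf{n}_{z_1}^-$-stability of $V$ follows at once: for $X \in \mf{n}_{z_1}^-$ and $Y \in \mf{g}$,
\[
X(Yb) = [X,Y]b + Y(Xb) = [X,Y]b,
\]
which lies in $\mf{g} \cdot b = V$ since $[X,Y] \in \mf{g}$. Finally, to upgrade from $\mf{n}_{z_1}^-$ to its completion $\mbf{L} = \hat{\mf{n}}_{z_1}^-$, I would note that the $z_1$-grading on $L(\omega_{z_1})^\vee$ is bounded below (with $b$ sitting in the bottom component after the usual shift of indexing), so any fixed vector $v \in L(\omega_{z_1})^\vee$ is annihilated by $\mb{L}_i$ for $i \gg 0$. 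Thus for $X = \sum X_i \in \mbf{L}$ the action $X \cdot v$ is a finite sum of $\mf{n}_{z_1}^-$-translates of $v$, each lying in $V$ by what was just shown, and the sum itself lies in $V$.

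There is no real obstacle here; the only subtlety is getting the sign conventions straight so that $\mf{n}_{z_1}^- \subseteq \mf{n}^-$ (and not the reverse), which is why I would make that containment the first step before invoking the lowest-weight property.
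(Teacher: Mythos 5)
Your proof is correct and takes essentially the same approach as the paper: both reduce everything to the observation $\mf{n}_{z_1}^- \cdot b = 0$, apply the one-line commutator identity, and handle the completion by noting that the action of $\mbf{L}$ on any fixed vector truncates to the action of a finite part of $\mf{n}_{z_1}^-$. The only difference is that you make explicit the step the paper leaves implicit — that $\mf{n}_{z_1}^- \subseteq \mf{n}^-$ because each root has all coefficients of a common sign — which is a worthwhile clarification but not a different argument.
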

\begin{proof}
	Let $Y \in \mf{n}_{z_1}^-$. Then
	\[
	YX b = [Y,X]b - XYb = [Y,X]b
	\]
	since $Yb= 0$. The representation $L(\omega_{z_1})^\vee$ is a lowest weight representation, so for any $Y' \in \hat{\mf{n}}_{z_1}^-$, there exists a truncation $Y \in \mf{n}_{z_1}^-$ of $Y$ such that $Yb = Y'b$, so $\mbf{L}$ also acts on $V$.
\end{proof}
\begin{lem}
	The map
	\[
	\mb{C} \oplus \mb{L}^\vee \to V
	\]
	sending $1 \in \mb{C}$ to $b$ and $X \in \mb{L}^\vee = \mf{n}_{z_1}^+$ to $Xb$ is an isomorphism of vector spaces.
\end{lem}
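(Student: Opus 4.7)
\emph{The plan} is to identify $V = \mf{g} \cdot b$ as the internal direct sum $\mb{C} b \oplus \mf{n}_{z_1}^+ \cdot b$, from which both surjectivity and injectivity of the stated map follow. The first step is to decompose $\mf{g} = \mf{p}_{z_1}^- \oplus \mf{n}_{z_1}^+$ using the $z_1$-grading, where the parabolic piece has the further decomposition $\mf{p}_{z_1}^- = \mf{n}_{z_1}^- \oplus \mf{g}^{(z_1)} \oplus \mb{C}\alpha_{z_1}^\vee$. The key preliminary observation is that $\mf{p}_{z_1}^- \cdot b \subseteq \mb{C} b$: the subspace $\mf{n}_{z_1}^-$ is contained in $\mf{n}^-$ and so annihilates the lowest weight vector $b$; the element $\alpha_{z_1}^\vee$ acts on $b$ by the scalar $-\omega_{z_1}(\alpha_{z_1}^\vee) = -1$; and $\mf{g}^{(z_1)} \cdot b = 0$ because $b$ is a lowest weight vector of weight zero for the semisimple subalgebra $\mf{g}^{(z_1)}$ (noting $-\omega_{z_1}(\alpha_i^\vee) = 0$ for $i \neq z_1$), hence generates the trivial $\mf{g}^{(z_1)}$-subrepresentation.

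From this preliminary observation we immediately obtain $V = \mf{g} \cdot b = \mb{C} b + \mf{n}_{z_1}^+ \cdot b$, which gives surjectivity. For injectivity, suppose $(c, X) \in \mb{C} \oplus \mf{n}_{z_1}^+$ satisfies $cb + Xb = 0$. Since $\mf{n}_{z_1}^+$ is strictly positively $z_1$-graded while $b$ lies in a single $z_1$-weight space, the two contributions $cb$ and $Xb$ occupy disjoint $z_1$-graded pieces of $L(\omega_{z_1})^\vee$, forcing $c = 0$ and $Xb = 0$ separately. What remains is to verify that for $X \in \mf{n}_{z_1}^+$, $Xb = 0$ implies $X = 0$.

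For this final step the approach is to show that the $\mf{g}$-stabilizer of the line $\mb{C} b \subset L(\omega_{z_1})^\vee$ equals $\mf{p}_{z_1}^-$ exactly: one containment is the preliminary observation, and the reverse follows from a dimension count, since $G \cdot [b] \subseteq \mb{P}(L(\omega_{z_1})^\vee)$ is the closed orbit of a fundamental representation, isomorphic to $G/P_{z_1}^-$ and hence of dimension $\dim \mf{n}_{z_1}^+$. Consequently the stabilizer of the vector $b$ itself is $\mf{g}^{(z_1)} \oplus \mf{n}_{z_1}^-$, which meets $\mf{n}_{z_1}^+$ trivially by $z_1$-grading, completing the injectivity.

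\emph{The main obstacle} is the dimension computation for the closed orbit in the previous paragraph; this is a standard fact from the representation theory of semisimple Lie groups but is the sole non-formal ingredient in the argument. As an alternative avoiding this appeal, since $\mf{g}$ is simply-laced each root space is one-dimensional, so one can expand $X = \sum_{\alpha >_{z_1} 0} c_\alpha e_\alpha$ and note that the vectors $e_\alpha b$ lie in distinct weight spaces of $L(\omega_{z_1})^\vee$; the problem then reduces to showing $e_\alpha b \neq 0$ for each $\alpha >_{z_1} 0$, which can be established by induction on the height of $\alpha$ using $\mf{sl}_2$-string considerations at each step.
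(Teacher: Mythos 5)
Your proof is correct, and the overall strategy is essentially the same as the paper's: identify the stabilizer of the line $\mb{C}b$ as $\mf{p}_{z_1}^-$, and then use the $z_1$-grading to split off the $\mb{C}$ and $\mb{L}^\vee$ contributions. The difference lies in how you nail down that stabilizer. The paper defines $\mf{p} = \{X \in \mf{g} : Xb \in \mb{C}b\}$ abstractly, observes $\mf{p}_{z_1}^- \subseteq \mf{p} \subsetneq \mf{g}$, and then invokes the maximality of $\mf{p}_{z_1}^-$ as a proper subalgebra to force $\mf{p} = \mf{p}_{z_1}^-$. You instead verify the containment $\mf{p}_{z_1}^- \cdot b \subseteq \mb{C}b$ by hand (piece by piece, which the paper glosses over), and for the reverse you count dimensions against the closed orbit $G/P_{z_1}^-$ in $\mb{P}(L(\omega_{z_1})^\vee)$. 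Both routes appeal to a standard fact (maximality of a maximal parabolic vs.\ identification of the closed orbit of a fundamental representation), and they are essentially equivalent in depth; the paper's version avoids mentioning the group $G$ at all, which fits its stated aim of keeping the Schubert-variety discussion as algebraic as possible. One small caution on your alternative "root-height induction" argument: establishing $e_\alpha b \neq 0$ for every $\alpha >_{z_1} 0$ by induction on height requires choosing, at each step, a simple root $\alpha_i$ so that $e_\alpha$ is a nonzero multiple of $[e_i, e_\beta]$ and then controlling both terms of $e_i e_\beta b - e_\beta e_i b$; this is doable but fiddlier than it looks, so the dimension-count route (or the paper's maximality route) is the cleaner choice.
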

\begin{proof}
	It is easy to see that the subspace
	\[
	\mf{p} \coloneqq \{ X \in \mf{g} : Xb \in \mb{C}b\} \subset \mf{g}
	\]
	is a subalgebra. Moreover it contains the maximal parabolic $\mf{p}_{z_1}^- = \bigoplus_{\alpha \leq_{z_1} 0} \mf{g}_\alpha$. Since $L(\omega_{z_1})^\vee$ is not the trivial representation, we have $\mf{p}_{z_1}^- \subseteq \mf{p} \subsetneq \mf{g}$. Therefore $\mf{p} = \mf{p}_{z_1}^-$, so $\mb{L}^\vee \cap \mf{p} = 0$ and the map is injective.
	
	Given any $X \in \mf{g}$, we may express $X$ as $X^+ + X^-$ where $X^+ \in \mf{n}_{z_1}^+$ and $X^- \in \mf{p}_{z_1}^-$. Then $Xb = X^+ b + X^- b$ where $X^- b \in \mb{C}b$, showing surjectivity.
\end{proof}
\begin{remark}\label{rem:p-in-a3}
	We expect that the restriction of $w\colon \Rgen \to R$ to
	\[
	\bigwedge^{r_2} F_2^* \otimes \bigwedge^{r_3} F_3 \otimes \mb{L}_m^* \subset \bigwedge^{r_2} F_2^* \otimes \bigwedge^{r_3} F_3 \otimes [\mb{C} \oplus \mb{L}^\vee] \subseteq W(a_3)
	\]
	should exactly recover the structure map $p_m$. While this should not be too difficult to prove, it would be somewhat technical and unnecessary for our purposes, so we leave it as a guess. Some of the results below should (in principle) be consequences of this statement, but of course we do not assume this statement in their proofs.
\end{remark}

\begin{lem}\label{lem:V-exp-props}
	Let $h \in R$ be a nonzerodivisor, let $\pi \colon [\mb{C} \oplus \mb{L}^\vee] \otimes R \twoheadrightarrow \mb{C} \otimes R$ be projection onto the first factor, and let $\gamma\colon \mb{L}^\vee \otimes R \to \mb{C} \otimes R$ be any map. Then
	\begin{itemize}
		\item there is a unique $X \in \mbf{L} \cotimes R_h$ such that $(h\pi+\gamma) = h\pi \exp X$, and
		\item if $S$ is a ring containing $R$ and $X' \in \mbf{L} \cotimes S$ satisfies $(h\pi+\gamma) = h\pi \exp X'$, then $X'$ must belong to $\mbf{L} \cotimes R_h$ and thus equal $X$.
	\end{itemize}
\end{lem}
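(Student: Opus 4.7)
The plan is to exploit the $z_1$-grading on $V = \mb{C} \oplus \mb{L}^\vee$, in which $\mb{C}$ sits in degree $0$ and $\mb{L}_j^*$ in degree $j$ for $j \geq 1$. An element $u_i \in \mb{L}_i \subset \mbf{L}$ acts on $V \subseteq L(\omega_{z_1})^\vee$ by lowering $z_1$-degree by $i$: it annihilates $\mb{C}$, maps $\mb{L}_j^* \to \mb{L}_{j-i}^*$ for $j > i$, and maps $\mb{L}_i^* \to \mb{C}$ via a nondegenerate pairing (nondegeneracy follows from the fact that the bracket $\mb{L}_i \otimes \mf{n}_{z_1,i}^+ \to \mf{g}_0$ has a nondegenerate component in $\mb{C}h_{z_1}$, which is the part that acts nontrivially on $b$, since $\mf{g}^{(z_1)}$ kills $b$). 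Consequently $(h\pi \exp X)$ automatically agrees with $(h\pi + \gamma)$ on $\mb{C}\otimes R$ (both equal $h\cdot\Id$), and restricting to $\mb{L}_j^*\otimes R$ the equation becomes
\[
\gamma|_{\mb{L}_j^*\otimes R} \;=\; h u_j \;+\; h\, Q_j(u_1,\ldots,u_{j-1}),
\]
where the first term is the $k=1$ part of $\pi \exp X$ (read through the pairing above), and $Q_j$ collects the $k\geq 2$ contributions of $\tfrac{1}{k!}X^k$, which by the degree constraint $i_1 + \cdots + i_k = j$ with each $i_l\geq 1$ involve only $u_i$ for $i < j$.

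The first claim then follows by induction on $j$. The base case $j = 1$ has $Q_1 = 0$, and since $h$ is a nonzerodivisor in $R$ and the pairing is nondegenerate, the equation uniquely determines $u_1 \in \mb{L}_1 \otimes R_h$. For the inductive step, once $u_1,\ldots,u_{j-1}$ have been uniquely pinned down in the appropriate $\mb{L}_i \otimes R_h$, the expression $Q_j(u_1,\ldots,u_{j-1})$ is a determined element of $\mb{L}_j \otimes R_h$, so that $u_j = \bigl(\gamma|_{\mb{L}_j^*\otimes R} - h\,Q_j\bigr)/h$ is the unique solution in $\mb{L}_j \otimes R_h$.

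For the second statement, the same degreewise recursion applied to $X' \in \mbf{L}\cotimes S$ shows that each component $u_j'$ satisfies the identical formula $h u_j' = \gamma|_{\mb{L}_j^*\otimes R} - h\,Q_j(u_1',\ldots,u_{j-1}')$. Inducting and interpreting $\mbf{L}\cotimes R_h \hookrightarrow \mbf{L}\cotimes S$ as an inclusion (which is sensible precisely because $h$ is invertible wherever $X'$ can yield a solution), each $u_j'$ coincides with $u_j$ and in particular lies in $\mb{L}_j\otimes R_h$, forcing $X' = X$. The only technical point in the whole argument is the leading-term identification—that applying $X$ once and then $\pi$ to $\mb{L}_j^*$ produces exactly $h u_j$ paired nondegenerately with $\mb{L}_j^*$—and this is immediate from the $z_1$-graded structure together with the action of $\mf{g}_0$ on $b$. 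I do not anticipate any other genuine obstacle; the proof reduces to a straightforward graded recursion enabled by the nonzerodivisor hypothesis on $h$.
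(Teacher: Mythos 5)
Your proof is correct and follows essentially the same route the paper takes (which is to "solve for $X$ explicitly in a manner similar to the proof of Theorem~\ref{thm:parametrize}"), namely a degree-wise recursion in the $z_1$-grading; the nondegenerate pairing $\mb{L}_i \otimes \mb{L}_i^* \to \mb{C}$ you identify as the technical crux is exactly the fact the paper packages as ``$\pi X = 0$ only when $X = 0$.'' The only difference is cosmetic: the paper dispatches uniqueness via the trivial-stabilizer observation, whereas you get uniqueness and existence simultaneously from the same recursion.
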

\begin{proof}
	For $X \in \mbf{L} \cotimes S$, we have $\pi X = 0$ only when $X = 0$. Thus, the precomposition action of $\exp (\mbf{L}\cotimes S)$ on $\pi$ has trivial stabilizer, showing uniqueness. One can solve for $X$ explicitly in a manner similar to the proof of Theorem~\ref{thm:parametrize}, showing it must be an element of $\mbf{L}\cotimes R_h$. One should informally think of $X$ as $\log (\pi + \gamma/h)$; we omit the details.
\end{proof}

The next result says that we do not lose any information by only considering the structure maps $w^{(i)}$, since they uniquely determine $w$. In fact, we only need the differentials and part of $w^{(a_3)}$, which we recall can be computed from $w^{(3)}$ (c.f. Remark~\ref{rem:a3-in-d3}).

\begin{prop}\label{prop:HST-determined-by-a3}
	Let $w$ and $w'$ be two maps $\Rgen \to R$ specializing $\Fgen$ to the same resolution $\mb{F}$ over some ring $R$. Viewing them as maps $\Rgen \otimes R \to R$, write $\bar{w},\bar{w}'$ for their restrictions to
	\[
	\bigwedge^{r_2} F_2^* \otimes \bigwedge^{r_3} F_3 \otimes [\mb{C} \oplus \mb{L}^\vee] \otimes R \subseteq W(a_3) \otimes R.
	\]
	Then there is a unique element $X \in \mbf{L} \cotimes R$ such that $\bar{w}' = \bar{w}\exp X$.
	
	In particular, if $\bar{w} = \bar{w}'$, then $X = 0$ and $w = w'$.
\end{prop}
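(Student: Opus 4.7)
The plan is to deduce the proposition from Lemma~\ref{lem:V-exp-props} by restricting $\bar{w}$ along a carefully chosen test element. Existence of $X$ is immediate from Theorem~\ref{thm:parametrize}: the unique $Y \in \mbf{L} \cotimes R$ satisfying $w' = w\exp Y$ restricts to give $\bar{w}' = \bar{w}\exp Y$, so we may take $X = Y$. Granting uniqueness, the ``in particular'' clause follows at once, since $\bar{w}=\bar{w}'$ then forces $X = 0$, whence the corresponding $Y$ is also zero and $w = w'$.

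The heart of the matter is uniqueness, which by Baker-Campbell-Hausdorff applied to $\exp X_1 \exp(-X_2)$ reduces to showing that the stabilizer of $\bar{w}$ under the exponential action of $\mbf{L} \cotimes R$ is trivial. So suppose $Z \in \mbf{L} \cotimes R$ with $\bar{w}\exp Z = \bar{w}$. For each $\xi \in \bigwedge^{r_2} F_2^* \otimes \bigwedge^{r_3} F_3$, define $\phi_\xi \colon V \otimes R \to R$ by $\phi_\xi(v) = \bar{w}(\xi \otimes v)$, where $V = \mb{C} \oplus \mb{L}^\vee$. The restriction of $\phi_\xi$ to the summand $\mb{C}\cdot b$ (with $b$ a lowest weight vector of $L(\omega_{z_1})^\vee$) records the value of $a_3 = \bigwedge^{r_3} d_3$ at $\xi$, hence yields an element of the ideal $I_{r_3}(d_3)$. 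Since $\mb{F}$ is acyclic, the Buchsbaum-Eisenbud criterion gives $\grade I_{r_3}(d_3) \geq 3$, and so by taking $\xi$ to be a generic linear combination of basis elements---adjoining indeterminates $t_1,\ldots,t_N$ to $R$ if no choice over $R$ itself works---we may assume the resulting scalar $h \in R$ (respectively $R[t_1,\ldots,t_N]$) is a nonzerodivisor. Then $\phi_\xi$ has the form $h\pi + \gamma$ appearing in Lemma~\ref{lem:V-exp-props}.

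Lemma~\ref{lem:V-exp-props} now furnishes a unique $X_0 \in \mbf{L}\cotimes R_h$ with $\phi_\xi = h\pi \exp X_0$. The stabilizing hypothesis, combined with $\exp X_0 \exp Z = \exp(X_0 \star Z)$, becomes $h\pi \exp(X_0 \star Z) = h\pi \exp X_0$, and the uniqueness clause of the lemma (which holds over any overring of $R$) yields $X_0 \star Z = X_0$ in $\mbf{L}\cotimes R_h$. Since every Baker-Campbell-Hausdorff correction lies strictly higher in the $\mb{L}$-grading than the lowest nonzero graded component of $Z$, induction on this component forces $Z = 0$ in $\mbf{L} \cotimes R_h$, and hence in $\mbf{L} \cotimes R$ by injectivity of the localization map. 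The principal obstacle is the production of the nonzerodivisor $h$, but this is straightforward given the grade bound $\grade I_{r_3}(d_3) \geq 3$ furnished by acyclicity.
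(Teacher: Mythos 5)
Your proof is correct and takes essentially the same route as the paper's: both exploit $\grade I_{r_3}(d_3)\geq 1$ to produce a nonzerodivisor $h=a_3(e)$, restrict $\bar{w}$ to $\mb{C}e\otimes[\mb{C}\oplus\mb{L}^\vee]$ to obtain a map of the form $h\pi+\gamma$, and then apply Lemma~\ref{lem:V-exp-props}. Where the paper simply asserts that uniqueness ``follows from Lemma~\ref{lem:V-exp-props},'' you make this explicit by reducing to triviality of the stabilizer under the exponential action and running the lowest-degree BCH argument, which is a useful elaboration rather than a departure.
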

The existence of such an element $X$ is already known by Theorem~\ref{thm:parametrize}; the substance of this statement is that $X$ is completely determined by comparing $\bar{w}$ and $\bar{w}'$. If Remark~\ref{rem:p-in-a3} were true, this would be immediate given Lemma~\ref{lem:GFR-p-determines-w}.
\begin{proof}
	Since $\mb{F}$ is acyclic, $\grade I_{r_3}(d_3) = 3 \geq 1$ so there is some $e \in \bigwedge^{r_3}F_3\otimes \bigwedge^{r_2}F_2^*$ such that $h = a_3(e) \in R$ is a nonzerodivisor. Let $\bar{w}_e$ and $\bar{w}'_e$ denote the restrictions of $\bar{w}$ and $\bar{w}'$ to $\mb{C}e \otimes [\mb{C} \oplus \mb{L}^\vee]$, viewed as maps
	\[
	[\mb{C} \oplus \mb{L}^\vee] \otimes R \to (\mb{C}e)^* \otimes R \cong R
	\]
	From Theorem~\ref{thm:parametrize}, we know there exists $X \in \mbf{L} \cotimes R$ with the property that $\bar{w}_e = \bar{w}_e' \exp X$. The uniqueness follows from Lemma~\ref{lem:V-exp-props}.
\end{proof}

It will often be convenient to manipulate higher structure maps $w^{(i)}$ over a larger ring containing $R$, for instance a localization in which $\mb{F}$ becomes split exact. The next result ensures that, even if we work in a larger ring, it is easy to tell when $w$ factors through $R$.

\begin{prop}\label{prop:localization-lemma}
	Let $R \subset S$ be two rings, $\mb{F}$ a resolution over $R$ with the property that $\mb{F}\otimes S$ is also acyclic, and $w' \colon \Rgen \to S$ a map specializing $\Fgen$ to $\mb{F} \otimes S$.
	
	Suppose the restriction of $w'$ to
	\[
	\bigwedge^{r_2} F_2^* \otimes \bigwedge^{r_3} F_3 \otimes [\mb{C} \oplus \mb{L}^\vee] \subseteq W(a_3)
	\]
	is $R$-valued. Then $w'$ factors through $R$.
\end{prop}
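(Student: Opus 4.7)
The plan is to reduce everything to showing that a certain element $X \in \mbf{L}\cotimes S$, encoding how $w'$ compares to a reference map built from $R$, actually lies in the subspace $\mbf{L}\cotimes R$. First, by the defining property of $\Rgen$, one picks some $w\colon \Rgen \to R$ specializing $\Fgen$ to $\mb{F}$; composing with the inclusion $R \hookrightarrow S$ gives $w_S\colon \Rgen \to S$, which specializes $\Fgen$ to $\mb{F}\otimes S$. Then Theorem~\ref{thm:parametrize} produces a unique $X \in \mbf{L}\cotimes S$ with $w' = w_S \exp(X)$, and the entire proof reduces to showing $X \in \mbf{L}\cotimes R$, since $w\exp(X)$ would then be a well-defined map $\Rgen \to R$ whose composition with $R \hookrightarrow S$ is $w'$.

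To pin down $X$, I would restrict both $w_S$ and $w'$ to $\bar{w}_S$ and $\bar{w}'$ on the subspace $\bigwedge^{r_3}F_2^* \otimes \bigwedge^{r_3} F_3 \otimes [\mb{C}\oplus \mb{L}^\vee]$; both take values in $R$, the former trivially and the latter by the hypothesis of the proposition. Since $\mb{F}$ is acyclic we have $\grade_R I_{r_3}(d_3) \geq 3$, so one may pick $e$ with $h = a_3(e)$ a nonzerodivisor in $R$. Restricting further to $\mb{C}e \otimes [\mb{C}\oplus \mb{L}^\vee]$ puts $\bar{w}_{S,e}$ into the form $h\pi + \gamma$ of Lemma~\ref{lem:V-exp-props}, and the lemma then yields a unique $\widetilde{X} \in \mbf{L}\cotimes R_h$ satisfying the restricted equation. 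Because $\mb{F}\otimes S$ is also acyclic, $h$ remains a nonzerodivisor in $S$, so $R_h \hookrightarrow S_h$, and the uniqueness clause of the lemma (applied inside $S_h$) forces $X = \widetilde{X} \in \mbf{L}\cotimes R_h$.

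Next I would repeat this argument with a second element $e'$ chosen so that $h' = a_3(e')$ is a nonzerodivisor on $R/hR$; this is possible because the grade of $I_{r_3}(d_3)$ modulo $h$ is still at least $2$. After possibly perturbing $e'$ within the same coset one may also arrange that $h'$ is a nonzerodivisor in $S$. This yields $X \in \mbf{L}\cotimes R_{h'}$ by the same reasoning. Now $(h,h')$ is a regular sequence in $R$, so the classical identity $R_h \cap R_{h'} = R$ holds inside $R_{hh'}$. Using the injection $R_{hh'} \hookrightarrow S_{hh'}$, one translates this into the statement that each component $X_i \in \mb{L}_i \otimes S$ lies in $\mb{L}_i \otimes R$: writing $X_i$ in a basis of $\mb{L}_i$, each scalar coefficient $s \in S$ satisfies $h^N s, h'^M s \in R$ for some exponents $N, M$, and a standard regular-sequence argument then forces $s \in R$.

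I expect the main obstacle to be the bookkeeping in this last step: one must carefully track the tower $R \hookrightarrow R_h \hookrightarrow S_h$ (and the analogue for $h'$) and verify that the set-theoretic equality $R_h \cap R_{h'} = R$ translates into an equality of submodules of $\mb{L}_i \otimes S$, rather than an identification only after a further localization. Once this is settled, everything reduces to the mechanical application of Lemma~\ref{lem:V-exp-props} and Theorem~\ref{thm:parametrize} outlined above, and the conclusion $w' = w\exp(X)$ with $X \in \mbf{L}\cotimes R$ gives the desired factorization through $R$.
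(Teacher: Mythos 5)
Your argument follows essentially the same route as the paper's: produce a reference map $w\colon\Rgen\to R$, use Theorem~\ref{thm:parametrize} to write $w'=(w\otimes S)\exp X$ with $X\in\mbf{L}\cotimes R$ to be shown, and then feed two elements $h_1,h_2$ of $a_3\bigl(\bigwedge^{r_3}F_3\otimes\bigwedge^{r_2}F_2^*\bigr)$ forming a regular sequence into Lemma~\ref{lem:V-exp-props} to get $X\in\mbf{L}\cotimes R_{h_1}\cap\mbf{L}\cotimes R_{h_2}=\mbf{L}\cotimes R$.

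One small correction: the intermediate claim that acyclicity of $\mb{F}\otimes S$ forces $h=a_3(e)$ to stay a nonzerodivisor in $S$ is not justified --- acyclicity over $S$ gives $\grade_S I_{r_3}(d_3)\geq 3$, so $I_{r_3}(d_3)S$ contains some nonzerodivisor, but a specific $h$ that is a nonzerodivisor in $R$ need not remain one in $S$. Consequently the detour through $S_h$ in your write-up (``the uniqueness clause applied inside $S_h$'') is shaky, since $S\to S_h$ need not be injective. Fortunately it is also unnecessary: the second bullet of Lemma~\ref{lem:V-exp-props} is stated exactly for the situation $X'\in\mbf{L}\cotimes S$ with $S\supseteq R$ and delivers $X'\in\mbf{L}\cotimes R_h$ directly. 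That is how the paper invokes it, and if you use the lemma the same way, the ``perturbing $e'$'' sentence can also be dropped. With those two steps pruned, your argument coincides with the paper's.
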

\begin{proof}
	This proof is similar to the preceding one. This time we use that $\grade I_{r_3}(d_3) = 3 \geq 2$ so there are $e_1,e_2 \in \bigwedge^{r_3}F_3\otimes \bigwedge^{r_2}F_2^*$ such that $h_1 = a_3(e_1)$ and $h_2 = a_3(e_2)$ form a regular sequence.
	
	Since $\mb{F}$ is acyclic, we may pick a $w\colon \Rgen \to R$ specializing $\Fgen$ to $\mb{F}$. By Theorem~\ref{thm:parametrize}, there exists $X \in \mbf{L} \cotimes S$ such that $w' = (w\otimes S)\exp X$.
	
	Let $\bar{w}_e$ and $\bar{w}_e'$ denote the restrictions of $w \otimes S$ and $w'$ to $(\mb{C}e_1 \oplus \mb{C}e_2) \otimes [\mb{C} \oplus \mb{L}^\vee] \subset W(a_3)$, viewed as maps
	\[
	[\mb{C} \oplus \mb{L}^\vee] \otimes S \to (\mb{C}e_1 \oplus \mb{C}e_2)^* \otimes S \cong S^2.
	\]
	We have $\bar{w}_e' = \bar{w}_e \exp X$. Applying Lemma~\ref{lem:V-exp-props} to the first row of $\bar{w}_e$ and $\bar{w}_e'$, we find that $X \in \mbf{L} \cotimes R_{h_1}$. Applying it to the second row, we find that $X \in \mbf{L} \cotimes R_{h_2}$. Since $h_1,h_2$ is a regular sequence, we have $R_{h_1} \cap R_{h_2} = R$ and $X \in \mbf{L} \cotimes R$, from which it follows that $w'$ factors through $R$ if viewed as a map from $\Rgen$.
\end{proof}

\printbibliography
	
\end{document}